\newcounter{licznik}[section]
 \newtheorem{theorem}[licznik]{Theorem}
 \newtheorem{lemma}[licznik]{Lemma}
\newtheorem{prop}[licznik]{Proposition}
\newtheorem{cor}[licznik]{Corollary}
 \newtheorem{remark}[licznik]{Remark}
\def\tl{\tilde}
\def\ve{\varepsilon}
\def\te{{\cal T}}
\def\es{{\cal S}}
\def\bes{{\bar \es}}
\def\ef{\mathcal{F}}
\def\ee{\mathbb{E}}
\def\er{\mathbb{R}}
\def\bse{\mathcal{E}}
\def\prob{\mathbb{P}}
\newcommand\ind[1]{1_{#1}}
\newcommand\assumptionlabel[1]{\hspace\labelsep
                               \normalfont\bfseries #1\ \ \gdef\@currentlabel{#1}}
\newenvironment{assumption}
               {\medskip\list{}{\labelwidth\z@ \itemindent-\leftmargin
                        }}
               {\endlist}
\begin{document}
\title{Impulse control maximising average cost per unit time: a non-uniformly ergodic case\footnote{Research of both authors has been partly supported by NCN grant DEC-2012/07/B/ST1/03298.}}
\author{Jan Palczewski\footnote{School of Mathematics, University of Leeds, LS2 9JT, Leeds, United Kingdom} \and \L{}ukasz Stettner\footnote{Institute of Mathematics Polish Acad. Sci., Sniadeckich 8, 00-656 Warsaw, Poland, and Vistula University}}
\maketitle

\begin{abstract}
This paper studies maximisation of an average-cost-per-unit-time ergodic functional over impulse strategies controlling a Feller-Markov process. The uncontrolled process is assumed to be ergodic but, unlike the extant literature, the convergence to invariant measure does not have to be uniformly geometric in total variation norm; in particular, we allow for non-uniform geometric or polynomial convergence. Cost of an impulse may be unbounded, e.g., proportional to the distance the process is shifted. We show that the optimal value does not depend on the initial point and provide optimal or $\ve$-optimal strategies.
\medskip

Keywords: impulse control, ergodic control, non-uniformly ergodic Markov process, unbounded cost
\end{abstract}


%
%
%

\section{Introduction}\label{sec:intro}
Let $(X_t)$ be a Feller-Markov process on $(\Omega, F,(F_t))$ with values in a locally compact space $E$ with the metric $\rho$ and Borel $\sigma$ field ${\cal E}$. The process starting from $x$ at time $0$ generates a probability measure $\prob^x$; $\ee^x$ denotes a related expectation operator. Process $(X_t)$ is controlled by impulses $(\tau, \xi)$: at time $\tau$ the process is shifted from the state $X_{\tau}$ to the state $\xi$ at the cost of $c(X_\tau, \xi)$ and follows its dynamics until the next impulse. We assume that impulses shift the process to a compact set $U \subseteq E$, i.e., $\xi \in U$ and the cost function $c$ is negative, continuous and uniformly bounded away from zero.\footnote{In a slightly misleading way, we call $c$ the cost. As it stands in functional \eqref{eqn:main_functional} with a plus sign, it is assumed to be negative and bounded away from zero, i.e., there is a minimum cost of an impulse.} A strategy $V=(\tau_i, \xi_i)$ is \emph{admissible} for $x \in E$ if $\tau_i$ form an increasing sequence of stopping times (possibly taking the value $\infty$) with $\lim_{i \to \infty} \tau_i = \infty$, $\prob^x$-a.s. To describe the evolution of the controlled process we introduce a construction of \cite[Section 2]{Stettner1983a} which follows ideas of \cite{Robin1978}. Namely, we consider $\Omega=D(R^+,E)^\infty$, where $D(R^+,E)$ is a canonical space of right continuous, left limited functions on $R^+$ taking values in $E$. We assume that $(F^1_t)$ is a canonical filtration on $D(R^+,E)$  and inductively define $F^{n+1}_t=F^n_t \otimes F_t$. The stopping times  $\tau_i$ are adapted $F^i_t \times \left\{\emptyset,D(R^+,E)\right\}^\infty$ while the impulses $\xi_i$ to $F^i_{\tau_i} \times \left\{\emptyset,D(R^+,E)\right\}^\infty$.
The trajectory of the controlled process $(X_t)$ is defined using coordinates $x^n$ of the canonical space $\Omega$, i.e. $X_t=x^n_t$ for $t\in [\tau_{n-1},\tau_n)$, with $\tau_0=0$. Given an impulse strategy $V$ following \cite[Section 2]{Stettner1983a} and \cite[Chapter 5 and Appendix 2]{Robin1978} we define a probability measure $\prob$ on $\Omega$. Although the controlled process $(X_t)$ and probability measure $\prob$ depend on the control strategy $V$ in what follows we shall not indicate that explicitly.

Our goal is to maximize over all admissible strategies the functional
\begin{equation}\label{eqn:main_functional}
J\big(x, (\tau_i, \xi_i)\big) = \liminf_{T \to \infty} \frac{1}{T} \ee^x \Big\{ \int_0^T f(X_s) ds + \sum_{i = 1}^\infty \ind{\tau_i \le T} c(X_{\tau_i-}, \xi_i) \Big\},
\end{equation}
where $f$ is a continuous bounded function and $X_{\tau_i-}$ is the state of the process before the $i$-th impulse with a natural meaning if there is more than one impulse at the same time. Alternatively, we shall also consider a weaker form of \eqref{eqn:main_functional}, namely
\begin{equation}\label{eqn:weaker_functional}
\hat{J}\big(x, (\tau_i, \xi_i)\big) = \liminf_{n \to \infty} \frac{1}{\ee^x\{\tau_n\}} \ee^x \Big\{ \int_0^{\tau_n} f(X_s) ds + \sum_{i = 1}^n \ind{\tau_i \le T} c(X_{\tau_i-}, \xi_i) \Big\},
\end{equation}
assuming that $(\tau_n)$ are such that $\ee^x\{\tau_n\}<\infty$.

Controlling random systems by impulses, i.e., discrete interventions, is often the only feasible strategy from an application point of view and, therefore, the literature is extensive. For applications in finance, the reader is referred to \cite{DPDS2010, Kelly2011} and references therein. Intensive studies of impulse control of diffusions and diffusions with jumps are presented in \cite{bensoussan1984}. Impulse control of Markov processes with average cost per unit time criterion \eqref{eqn:main_functional} has been studied first in \cite{Robin1981, Robin1983} under uniform ergodicity assumption for constant cost for impulses. These results were extended  to a separated cost (for definition see Proposition \ref{propseparated}) in \cite{Stettner1983a} and to quasicompact transition semigroups in \cite{Stettner1986b}. The problem was also studied under some compactness assumptions in \cite{GatarekStettner}. Ergodic impulse control of diffusion processes on bounded domains was studied in \cite{LionsPerthame1986} and \cite{Perthame1988}. The extension to unbounded domains although in $\er$ only, with linear impulse cost function $c$ depending on the size of an impulse and with $f \le 0$ was tackled in \cite{Jack2006}. Average cost per unit time functionals have also been widely studied in a different setting where the control affects diffusion process continuously, see the monograph \cite{Arapostathis2012} for a detailed discussion.

Solution of problems of the form \eqref{eqn:main_functional} and \eqref{eqn:weaker_functional} usually follows through a study of an auxiliary Bellman equation
\begin{equation}\label{eq1}
w(x)=\sup_\tau \liminf_{T \to \infty} \ee^x \bigg\{\int_0^{\tau \wedge T} (f(X_s)-\lambda)ds + Mw(X_{\tau \wedge T})\bigg\},
\end{equation}
where $Mw(x)=\sup_{\xi \in U} [w(\xi) + c(x,\xi)]$. The solution is a pair: a function $w:E \to \er$ and a constant $\lambda$. One of the main contributions of this paper is showing that when the process is not uniformly ergodic or the cost function $c$ is unbounded, \eqref{eq1} has a solution. The function $w$, which we will often call the value function, is unbounded as is $Mw$.  We prove that the constant $\lambda$ in the solution to the Bellman equation \eqref{eq1} is an optimal value for the functional $\hat{J}$ and frequently also for $J$,  while an optimal stopping time for \eqref{eq1} provides times of consecutive impulses in the optimal strategy. The impulses themselves are given by the maximiser of $Mw(x)$ which is shown to depend continuously on $x$ and, therefore, is measurable.

The novelty of this paper is that
\begin{itemize}
\item the uncontrolled process $(X_t)$ is not uniformly ergodic,
\item the cost function is not bounded, hence, it can measure the size of an impulse using the distance between the state before and after the impulse.
\end{itemize}
To the best of our knowledge, this paper is the first significant extension of a general theory of ergodic impulse control of Feller-Markov processes since 1980s. The relaxation of uniform ergodicity opens up the theory applicable to many ergodic processes encountered in applications, including an Ornstein-Uhlenbeck process with Levy noise.


The paper is structured as follows. Section 2 provides preliminary results for $\alpha$-potentials of centred $f$. 
In Section 3 we address the  impulse control problem with average cost per unit time functional \eqref{eqn:main_functional} and \eqref{eqn:weaker_functional} and an unbounded cost for a non-uniformly ergodic underlying process under an assumption that the zero-potential of centred $f$ is bounded from below. This restriction is relaxed in Section 4, where, using approximation techniques, we show that the optimal values in  \eqref{eqn:main_functional} and \eqref{eqn:weaker_functional} do not depend on $x$. We also provide $\ve$-optimal strategies through solutions to auxiliary impulse control problems that satisfies assumptions of Section 3.

\section{Preliminaries}
We write $P_t$ for the semigroup, acting on bounded Borel functions, corresponding the (uncontrolled) Markov process $(X_t)$: $P_t \phi(x) = \ee^x \{ \phi(X_t) \}$. A transition probability measure is denoted by $P_t(x, \cdot) := \prob^x \{ X_t \in \cdot \}$. We make the following assumptions:
\begin{assumption}
\item[(A1)] \label{ass:weak_feller}
(Weak Feller property)
$$
P_t\, \mathcal{C}_0 \subseteq \mathcal{C}_0,
$$
where $\mathcal C_0$ is the space of continuous bounded functions $E \to \er$ vanishing in infinity.
\end{assumption}
\begin{assumption}

\item[(A2)] \label{ass:speed_ergodic} There is a unique probability measure $\mu$ on $\bse$, a function $K: E \to (0, \infty)$ bounded on compacts and a function $h:[0, \infty) \to \er_+$ such that $\int_0^\infty h(t) dt < \infty$ and for any $x \in E$
\[
\| P_t(x, \cdot) - \mu(\cdot) \|_{TV} \leq K(x) h(t),
\]
where $\| \cdot \|_{TV}$ denotes the total variation norm. Furthermore, $\ee^x\left\{K(X_T)\right\}<\infty$ for each $T\geq 0$, and for any compact set $\Gamma\subset E$ and a sequence of sets $A_T \in \ef_T$
\[
\lim_{T\to \infty}\sup_{x\in \Gamma} \prob^x\left\{A_T\right\}= 0 \implies \lim_{T\to \infty}\sup_{x\in \Gamma}\ee^x \{\ind{A_T} K(X_T) \}= 0.
\]
\end{assumption}

Assumption \ref{ass:weak_feller} is necessary to establish the existence of optimal stopping times for general weak Feller processes (a counter-example when it is relaxed is provided at the end of Section 3.1 in \cite{Palczewski2008}). The class of weakly Feller processes \ref{ass:weak_feller} comprises Levy processes \cite[Theorem 3.1.9]{Applebaum2004}, solutions to stochastic differential equations with continuous coefficients driven by Levy processes (see, e.g., \cite[Theorem 6.7.2]{Applebaum2004}).

The first part of Assumption \ref{ass:speed_ergodic} satisfied by non-uniform geometrically ergodic or polynomially ergodic processes with examples discussed in \cite[Section 6]{palczewski2014}.
The second part of Assumption \ref{ass:speed_ergodic} is weaker than requiring that random variables $\left\{K(X_T), T\geq 0\right\}$ are uniformly integrable for initial states $x$ of $(X_t)$ from compact sets. However, the following condition which implies uniform integrability is more explicit to verify: $\sup_{x\in \Gamma} \sup_{T\geq 0}\ee^x \{K(X_T)^{1+\beta} \}<\infty$, for any compact set $\Gamma$ and some $\beta>0$ possibly depending on $\Gamma$. It is easy to verify using this condition that a standard Ornstein-Uhlenbeck process satisfies Assumption \ref{ass:speed_ergodic}.

\begin{lemma}
Under \ref{ass:weak_feller} the operator $P_t$ transforms continuous bounded from above functions into upper semi continuous functions bounded from above.
\end{lemma}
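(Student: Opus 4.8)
The plan is to realise $P_t\phi$ as a countable infimum of continuous functions and invoke only \ref{ass:weak_feller} on genuinely $\mathcal{C}_0$-functions. First I would normalise: since $\phi$ is continuous and bounded from above, say $\phi\le M$, and the process is conservative (so $P_t$ maps the constant $\mathbf 1$ to itself), one has $P_t\phi=P_t(\phi-M)+M$, and upper semicontinuity and boundedness from above are preserved by adding a constant; hence it suffices to treat the case $\phi$ continuous with $\phi\le 0$.

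Next I would approximate $\phi$ from above inside $\mathcal{C}_0$. Using that $E$, a locally compact separable metric space, is $\sigma$-compact, fix an exhausting sequence of compacts and, by Urysohn's lemma, functions $g_n\in\mathcal{C}_0$ with $0\le g_n\le 1$ and $g_n\uparrow 1$ pointwise. Set $\phi_n:=(\phi\vee(-n))\,g_n$. Then each $\phi_n$ is continuous, takes values in $[-n,0]$, and vanishes at infinity, so $\phi_n\in\mathcal{C}_0$. Because $\phi\le 0$, truncating at $-n$ and multiplying by a factor in $[0,1]$ can only increase the function, so $\phi\le\phi_n$; the truncations decrease in $n$ and the $g_n$ increase, and all factors involved keep a fixed sign, so $\phi_{n+1}\le\phi_n$; and for each fixed $x$ one has $\phi_n(x)=\phi(x)$ as soon as $g_n(x)=1$ and $n>-\phi(x)$, so $\phi_n(x)\to\phi(x)$. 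Thus $\phi_n\downarrow\phi$ pointwise. By \ref{ass:weak_feller}, $P_t\phi_n\in\mathcal{C}_0$; in particular every $P_t\phi_n$ is continuous.

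Finally I would pass to the limit pointwise. Fixing $x$ and writing $\mu:=P_t(x,\cdot)$, which is a probability measure, the sequence $\phi_1-\phi_n$ is nonnegative and increases to $\phi_1-\phi$, while $\phi_1$ is bounded, hence $\mu$-integrable; monotone convergence then gives $P_t\phi_n(x)=\int\phi_n\,d\mu\downarrow\int\phi\,d\mu=P_t\phi(x)$ (possibly equal to $-\infty$, which is harmless). Therefore $P_t\phi=\inf_n P_t\phi_n$ is an infimum of continuous functions, hence upper semicontinuous, and $P_t\phi\le P_t\phi_1\le\sup\phi_1\le 0$, so it is bounded from above; undoing the normalisation finishes the proof.

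The only real obstacle here is bookkeeping forced by the fact that \ref{ass:weak_feller} is stated for $\mathcal{C}_0$ and not for all bounded continuous functions: one cannot simply approximate $\phi$ by the truncations $\phi\vee(-n)$, since those need not vanish at infinity, which is precisely why the cut-offs $g_n$ and the preliminary subtraction of the upper bound $M$ are both required. (Alternatively one could first prove the standard fact that a conservative weak-Feller semigroup preserves $\mathcal{C}_b$ and then use the truncations directly, but folding this into the single approximation above is cleaner.) Everything else uses only positivity and linearity of $P_t$ together with $P_t\mathbf 1\equiv 1$, which is implicit in the standing assumption that $(X_t)$ generates probability transition kernels.
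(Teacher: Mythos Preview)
Your proof is correct and the structure---approximate $\phi$ from above by functions on which $P_t$ is known to produce continuous output, then realise $P_t\phi$ as a decreasing limit (hence an infimum) of continuous functions---is the same as the paper's. The difference is in where the work is placed. The paper first invokes the standard fact (cited from \cite[Corollary~2.2]{Palczewski2008}) that a conservative $\mathcal{C}_0$-Feller semigroup maps $\mathcal{C}_b$ into $\mathcal{C}_b$, and then uses the plain truncations $\phi_n=\phi\vee(-n)$; the passage to the limit is phrased via Fatou's lemma. You instead bypass that citation by building approximants $\phi_n=(\phi\vee(-n))g_n$ that lie in $\mathcal{C}_0$ itself, at the cost of the normalisation $\phi\le 0$ and the Urysohn cut-offs $g_n$. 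What your approach buys is self-containment: no appeal to the $\mathcal{C}_b$-Feller property is needed. What the paper's approach buys is brevity, since once $P_t:\mathcal{C}_b\to\mathcal{C}_b$ is granted the approximation is immediate. You are clearly aware of this trade-off, as your parenthetical remark about ``first prov[ing] the standard fact that a conservative weak-Feller semigroup preserves $\mathcal{C}_b$'' is exactly the route the paper takes. One minor point: your construction of the $g_n$ uses $\sigma$-compactness of $E$, which you obtain from separability; the paper does not state separability explicitly, though it is implicit in the setting.
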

\begin{proof}
By \cite[Corollary 2.2]{Palczewski2008} the semigroup $P_t$ transforms continuous bounded functions into continuous bounded functions. Approximating a continuous function $\varphi$ bounded from above by a sequence of bounded functions $\varphi_n = \max (\varphi,  -n)$ and applying Fatou's lemma completes the proof.
\end{proof}

Let, for $\alpha\ge0$
\begin{equation}\label{eqalphap}
q_\alpha(x)=\ee^x\left\{\int_0^\infty e^{-\alpha t} (f(X_t)-\mu(f))dt\right\}
\end{equation}
with $q := q_0$. We have
\begin{lemma}\label{lq}
Under \ref{ass:weak_feller} and \ref{ass:speed_ergodic} we have that $q_\alpha(x) \to q(x)$ uniformly on compact sets as $\alpha \to 0$, and $q$ is a continuous function such that for any bounded stopping time $\tau$
\begin{equation}\label{eq0potent}
q(x)=\ee^x\left\{\int_0^\tau(f(X_t)-\mu(f))dt + q(X_\tau)\right\}.
\end{equation}
Moreover, for any compact set $\Gamma\subset E$ and a sequence of sets $A_T \in \ef_T$ we have
\begin{equation}\label{eqn:eq0potent_AT}
\lim_{T\to \infty}\sup_{x\in \Gamma} \prob^x\left\{A_T\right\}= 0 \implies \lim_{T\to \infty}\sup_{x\in \Gamma}\sup_{\alpha \in [0,1)} \ee^x \{\ind{A_T} |q_\alpha(X_T)| \}= 0.
\end{equation}
\end{lemma}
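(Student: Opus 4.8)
The plan is to centre the integrand and reduce everything to the single estimate coming from \ref{ass:speed_ergodic}. Write $\bar f := f - \mu(f)$, which is bounded continuous with $|\bar f| \le 2\|f\|_\infty$, and note the pointwise bound
\[
|P_t \bar f(x)| = \Big|\int f(y)\,\big(P_t(x,dy) - \mu(dy)\big)\Big| \le \|f\|_\infty \,\|P_t(x,\cdot) - \mu\|_{TV} \le C\,K(x)\,h(t),
\]
valid for all $x \in E$ and $t \ge 0$ by \ref{ass:speed_ergodic}, with $C$ depending only on $\|f\|_\infty$. Since $\int_0^\infty h(t)\,dt < \infty$, Fubini's theorem gives $q_\alpha(x) = \int_0^\infty e^{-\alpha t} P_t\bar f(x)\,dt$ for every $\alpha \ge 0$ (for $\alpha=0$ one reads the right-hand side as the absolutely convergent integral defining $q$), and hence the domination, uniform in $\alpha$,
\[
|q_\alpha(x)| \le \int_0^\infty |P_t\bar f(x)|\,dt \le C\,K(x)\int_0^\infty h(t)\,dt =: C'\,K(x), \qquad \alpha \ge 0,
\]
so that $q_\alpha$ and $q$ are finite and bounded on compact sets.

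For the first assertion, fix a compact $\Gamma$. For $x \in \Gamma$ one has $|q_\alpha(x) - q(x)| \le \int_0^\infty (1 - e^{-\alpha t})\,|P_t\bar f(x)|\,dt \le C \big(\sup_{y \in \Gamma} K(y)\big) \int_0^\infty (1 - e^{-\alpha t})\,h(t)\,dt$, and the last integral tends to $0$ as $\alpha \downarrow 0$ by dominated convergence ($0 \le 1 - e^{-\alpha t} \le 1$, $h \in L^1$); this is the claimed uniform convergence on $\Gamma$. Each $q_\alpha$ with $\alpha > 0$ is continuous: if $x_n \to x$ then $P_t\bar f(x_n) \to P_t\bar f(x)$ for every $t$, since $P_t$ preserves bounded continuous functions by \cite[Corollary 2.2]{Palczewski2008} (here \ref{ass:weak_feller} enters), and $|e^{-\alpha t} P_t\bar f(x_n)| \le 2\|f\|_\infty e^{-\alpha t}$ is integrable on $[0,\infty)$; dominated convergence yields $q_\alpha(x_n) \to q_\alpha(x)$. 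A locally uniform limit of continuous functions is continuous, so $q$ is continuous on $E$.

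For \eqref{eq0potent} I would pass through a martingale. Starting from $q(y) = \int_0^\infty P_r\bar f(y)\,dr$ and applying Fubini once more — legitimate because $P_u\big(|P_r\bar f|\big)(y) \le C\,\ee^y\{K(X_u)\}\,h(r)$ is integrable in $r$, using $\ee^y\{K(X_u)\} < \infty$ from \ref{ass:speed_ergodic} — one gets the semigroup identity $q = P_u q + \int_0^u P_r\bar f\,dr$ for every $u \ge 0$. Combined with the Markov property, this shows that $M_t := q(X_t) + \int_0^t \bar f(X_s)\,ds$ is an $(\ef_t)$-martingale under every $\prob^x$: it is right-continuous (since $X$ is and $q$ is continuous) and integrable (by the uniform bound on $q$ and boundedness of $\bar f$). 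For a bounded stopping time $\tau$, optional sampling applied to this martingale gives $\ee^x\{M_\tau\} = \ee^x\{M_0\} = q(x)$, which is precisely \eqref{eq0potent}. (Alternatively one could start from the strong-Markov identity $q_\alpha(x) = \ee^x\big\{\int_0^\tau e^{-\alpha t}\bar f(X_t)\,dt + e^{-\alpha\tau}q_\alpha(X_\tau)\big\}$ and let $\alpha \downarrow 0$, but passing to the limit in the last term needs $\ee^x\{K(X_\tau)\} < \infty$, which the martingale route avoids.) Finally, \eqref{eqn:eq0potent_AT} is immediate from the same uniform bound: for $A_T \in \ef_T$,
\[
\sup_{x \in \Gamma}\ \sup_{\alpha \in [0,1)}\ \ee^x\{\ind{A_T}\,|q_\alpha(X_T)|\} \le C'\,\sup_{x \in \Gamma} \ee^x\{\ind{A_T}\,K(X_T)\},
\]
and the right-hand side tends to $0$ whenever $\sup_{x\in\Gamma}\prob^x\{A_T\} \to 0$, by the second part of \ref{ass:speed_ergodic}.

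The one genuinely delicate step is \eqref{eq0potent}: since $q$ is unbounded and $\int_0^\infty |\bar f(X_s)|\,ds = \infty$ a.s., the non-discounted potential equation cannot be obtained by a naive interchange of limits, and the argument must be arranged so that all integrability is supplied by the bound $|q_\alpha| \le C'K$ together with $\ee^x\{K(X_t)\} < \infty$ from \ref{ass:speed_ergodic}; the martingale/optional-sampling packaging does exactly this. Everything else is dominated convergence against the integrable tail $h$.
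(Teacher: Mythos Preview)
Your proof is correct and follows the paper's line almost exactly: the same domination $|q_\alpha|\le C' K$, the same dominated-convergence argument for uniform convergence of $q_\alpha\to q$ on compacts, continuity of $q_\alpha$ under \ref{ass:weak_feller} passing to $q$, and the same reduction of \eqref{eqn:eq0potent_AT} to the second clause of \ref{ass:speed_ergodic}.

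The one genuine difference is how you reach the deterministic-time identity $q(x)=\ee^x\{\int_0^T\bar f(X_t)\,dt+q(X_T)\}$ that underlies the martingale. The paper obtains it by taking $\alpha\downarrow 0$ in the resolvent identity $q_\alpha(x)=\ee^x\{\int_0^T e^{-\alpha t}\bar f(X_t)\,dt+e^{-\alpha T}q_\alpha(X_T)\}$; since $q_\alpha$ is unbounded, the limit in the last term is handled by a three-term split $a_\alpha+b_\alpha+c_R$ (localising to a ball of radius $R$) and uses the tightness result \cite[Proposition 2.1]{Palczewski2008} to kill $c_R$. You instead compute $P_T q$ directly via Fubini, justified by $|P_r\bar f(X_T)|\le C\,K(X_T)h(r)$ and $\ee^x\{K(X_T)\}<\infty$ from \ref{ass:speed_ergodic}. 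Your route is shorter and does not need the tightness proposition at this point. Both arguments then conclude \eqref{eq0potent} by optional sampling of the resulting right-continuous martingale, which is exactly what the paper does, so your parenthetical contrast is slightly off: the paper also passes to the martingale at deterministic times and only then stops, and therefore never needs $\ee^x\{K(X_\tau)\}<\infty$ for random $\tau$ either.
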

\begin{proof}
By \ref{ass:speed_ergodic} we have that $|q_\alpha(x)|\leq K(x) \|f\| \int_0^\infty h(t)dt$ for $\alpha \in [0,1)$, where $\|\cdot\|$ is the supremum norm, so $q_\alpha(x)$ is well defined. Now
\begin{equation}\label{eqo}
|q(x)-q_\alpha(x)|\leq \int_0^\infty (1-e^{-\alpha t})|P_t(f-\mu(f))(x)|dt\leq \int_0^\infty (1-e^{-\alpha t}) K(x) \|f\| h(t)dt \to 0
\end{equation}
as $\alpha \to 0$ uniformly on compact sets, because $K(x)$ is bounded on compact sets. Consequently, since under \ref{ass:weak_feller} $q_\alpha$ is a continuous function, we have that $q$ is also continuous. We have
\begin{equation*}
 \sup_{x\in \Gamma}\sup_{\alpha \in [0,1)}\ee^x \{\ind{A_T} |q_\alpha(X_T)| \}\leq   \sup_{x\in \Gamma}\ee^x \{\ind{A_T} K(X_T)\}\|f\|\int_0^\infty h(t)dt \to 0
 \end{equation*}
as $T\to \infty$ provided that $\lim_{T\to \infty}\sup_{x\in \Gamma}P_x\left\{A_T\right\}= 0$. It remains to show \eqref{eq0potent}. For $\alpha>0$ and $T\geq 0$ we have
\(
q_\alpha(x)=\ee^x\left\{\int_0^T e^{-\alpha t}(f(X_t)-\mu(f))dt + e^{-\alpha T}q_\alpha(X_T)\right\}.
\)
Easily, 
\[
\lim_{\alpha \to 0} \ee^x\big\{\int_0^T e^{-\alpha t}(f(X_t)-\mu(f))dt \big\} = \ee^x\big\{\int_0^T (f(X_t)-\mu(f))dt\big\}.
\]
Denoting $L=\|f\| \int_0^\infty h(t)dt$ and using \ref{ass:speed_ergodic} we obtain
\begin{align*}
&\big|\ee^x\left\{e^{-\alpha t}q_\alpha(X_t)\right\}-\ee^x\left\{q(X_t)\right\}\big|\\[2pt]
&\leq (1-e^{-\alpha t})\ee^x \left\{|q_\alpha(X_t)|\right\} +  \ee^x \left\{|q_\alpha(X_t)-q(X_t)|\right\}\\[2pt]
&\leq (1-e^{-\alpha t}) L\, \ee^x \{K(X_t)\} +\ee^x \left\{\ind{\rho(x,X_t) < R}|q_\alpha(X_t)-q(X_t)|\right\}+ \ee^x \left\{\ind{\rho(x,X_t) \ge R}L K(X_t)\right\}\\
&=a_\alpha+b_\alpha+c_R.
\end{align*}
Clearly $a_\alpha \to 0$ as $\alpha \to 0$. By \eqref{eqo} also $\lim_{\alpha \to 0} b_\alpha =  0$ for any fixed $R$. By \ref{ass:weak_feller} and \cite[Proposition 2.1]{Palczewski2008} taking into account integrability of $K(X_t)$ we obtain that $c_R\to 0$ as $R \to \infty$. Consequently, $q(X_t)$ is integrable and
\(
q(x)=\ee^x\big\{\int_0^T (f(X_t)-\mu(f))dt + q(X_T)\big\},
\)
from which it follows that $Z_s=\int_0^s (f(X_t)-\mu(f))dt + q(X_s)$ is a martingale and we immediately have \eqref{eq0potent}.
\end{proof}

\section{Optimal control when \texorpdfstring{$q$}{q} is bounded from below}\label{sec:impulse_control}

We make the following standing assumption for the cost function $c$:
\begin{assumption}
\item[(B1)] \label{ass:cost_function}There is $c < 0$ such that $c(x,x') \le c$ for $(x,x')\in E \times U$, and for $z, z' \in U$
\begin{equation}\label{triangle}
c(x,z)\geq c(x,z')+c(z',z).
\end{equation}
\end{assumption}
Define $\underline{c}(x)=\inf_{a\in U}c(x,a)$ and $\bar{c}(x)=\sup_{a\in U}c(x,a)$. Denote by $\es$ the family of stopping times taking finite values only and by $\bes$ the extension of the latter to stopping times with possibly infinite values. We follow a convention that, unless specified otherwise, all stopping times are from $\bes$.

We will follow a vanishing discount approach, see e.g. \cite{Robin1981}. We consider first a discounted cost impulse control problem which consists in maximization of the functional
\begin{equation}\label{eqn:disc_functional}
J_\alpha\big(x, (\tau_i, \xi_i)\big) = \ee^x \Big\{ \int_0^\infty e^{-\alpha s} f(X_s) ds + \sum_{i = 1}^\infty \ind{\tau_i < \infty} e^{-\alpha \tau_i} c(X_{\tau_i-}, \xi_i) \Big\}
\end{equation}
over admissible impulse strategies\footnote{Recall that $(\tau_i, \xi_i)$ is an admissible strategy if $(\tau_i)$ is a non-decreasing sequence of stopping times from $\bes$ and $\xi_i \in U$ are $\ef_{\tau_i}$-measurable random variables. For more details including construction of the controlled process, see the introduction.} $(\tau_i, \xi_i)$ with the optimal value denoted by $v_\alpha (x)$. Using $v_\alpha$ we will then obtain a sequence of functions converging, as $\alpha \to 0$, to $w$ in \eqref{eq1}. From there, we will derive an optimal value and an optimal strategy for \eqref{eqn:main_functional}.

The following assumption is used for characterisation of the value function $v_\alpha$ as a fixed point of an appropriate Bellman operator:
\begin{assumption}
\item[(B2)]\label{ass:unifintcalpha} For any compact set $\Gamma \subset E$ and any $T > 0$, the random variable $\zeta_T = \sup_{t \in [0, T]} |\underline{c}(X_t)|$ is uniformly integrable with respect to $\prob^y$ for $y \in \Gamma$, i.e.,
\[
\lim_{n \to \infty} \sup_{y \in \Gamma} \ee^y \{ \zeta_T \ind{\zeta_T > n} \} = 0.
\]
\end{assumption}

For a continuous function $v$, consider an operator
\begin{equation}\label{def:oper1}
\te v(x):= \sup_{\tau \in \bes} \ee^x\Big\{\int_0^{\tau} e^{-\alpha s} f(X_s)ds + \ind{\tau < \infty} e^{-\alpha \tau}Mv(X_{\tau})\Big\},
\end{equation}
where $Mv(x):=\sup_{\xi\in U}[c(x,\xi)+v(\xi)]$ and its approximation
\[
\te_L v(x):= \sup_{\tau \in \bes} \ee^x\Big\{\int_0^{\tau} e^{-\alpha s} f(X_s)ds + e^{-\alpha \tau}M_L v(X_{\tau})\Big\}
\]
with $M_Lv(x):=\sup_{\xi\in U}[c(x,\xi) \vee (-L) +v(\xi)]$. In the definition of operator $\te_L$, the indicator $\ind{\tau < \infty}$ is omitted intentionally as $M_L v$ is bounded, so for infinite value of $\tau$ the discounting makes that term equal $0$.

Define a functional with a truncated cost function
\begin{equation}\label{eqn:disc_functional_L}
J^L_\alpha\big(x, (\tau_i, \xi_i)\big) = \ee^x \Big\{ \int_0^\infty e^{-\alpha s} f(X_s) ds + \sum_{i = 1}^\infty e^{-\alpha \tau_i} \big(c(X_{\tau_i-}, \xi_i) \vee (-L) \big)\Big\},
\end{equation}
and denote its optimal value by $v^L_\alpha(x)$.

\begin{lemma}\label{lem:disc_approx}
Assume \ref{ass:unifintcalpha} and that $v$ is a continuous function with $\| v\| \le \|f\| / \alpha$. For each $n \ge 1$, the limits
\[
\lim_{L \to \infty} \te^n_L v(x) = \te^n v(x), \qquad \lim_{L \to \infty} M_L \te^n_L v(x) = M \te^n v(x)
\]
are uniform in $x$ from compact sets.
\end{lemma}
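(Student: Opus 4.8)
The plan is to prove the first family of identities for $n=1$, propagate it to all $n$ by induction on $n$, and read off the second family along the way. Fix a compact $\Gamma\subset E$. Two elementary facts will be used throughout. (a) If $\|v\|\le\|f\|/\alpha$ then, since $c<0$, $Mv\le\|v\|$ and $M_Lv\le\|v\|$ for every $L\ge0$, while choosing $\tau\equiv\infty$ in \eqref{def:oper1} and in the definition of $\te_L$ gives the lower bound $-\|f\|/\alpha$; hence $\|\te v\|\le\|f\|/\alpha$, $\|\te_Lv\|\le\|f\|/\alpha$, and every iterate $\te^kv$, $\te_L^kv$ stays in that ball. (b) Since $c(z,\xi)<0$ we have $c(z,\xi)\vee(-L)=c(z,\xi)+(|c(z,\xi)|-L)^+$ and $|c(z,\xi)|\le|\underline c(z)|$ for $\xi\in U$, so on taking $\sup_{\xi\in U}$,
\[
0\le M_Lw(z)-Mw(z)\le\big(|\underline c(z)|-L\big)^+\qquad\text{for every continuous }w,
\]
and in particular $M_Lw=Mw$ on $\Gamma$ as soon as $L>\sup_{z\in\Gamma}|\underline c(z)|$.

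Granting the case $n=1$, the induction step is short. From
\[
\te_L^nv-\te^nv=\big(\te_L(\te_L^{n-1}v)-\te_L(\te^{n-1}v)\big)+\big(\te_L(\te^{n-1}v)-\te(\te^{n-1}v)\big)
\]
the first bracket is at most $\sup_{\tau\in\bes}\ee^x\{e^{-\alpha\tau}\,|M_L(\te_L^{n-1}v)(X_\tau)-M_L(\te^{n-1}v)(X_\tau)|\}\le\sup_{\xi\in U}|\te_L^{n-1}v(\xi)-\te^{n-1}v(\xi)|$, which $\to0$ uniformly in $x$ because $U$ is compact and the induction hypothesis holds on $U$; the second bracket is the $n=1$ statement applied to $w=\te^{n-1}v$ (of norm $\le\|f\|/\alpha$). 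Likewise $M_L\te_L^nv-M\te^nv=(M_L\te_L^nv-M_L\te^nv)+(M_L\te^nv-M\te^nv)$, whose first summand is bounded by $\sup_{\xi\in U}|\te_L^nv(\xi)-\te^nv(\xi)|\to0$ and whose second summand vanishes on $\Gamma$ for $L$ large by (b); so the second family follows from the first.

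For $n=1$: $M_Lv\ge Mv$ and monotonicity of the optimal-stopping functional in its terminal payoff (on $\{\tau=\infty\}$ both terminal terms vanish, by the indicator resp.\ the discounting) give $\te_Lv\ge\te v$. Conversely, every $\tau\in\bes$ satisfies, by (b),
\[
\ee^x\Big\{\int_0^\tau e^{-\alpha s}f(X_s)\,ds+e^{-\alpha\tau}M_Lv(X_\tau)\Big\}\le\te v(x)+\ee^x\big\{e^{-\alpha\tau}\big(|\underline c(X_\tau)|-L\big)^+\ind{\tau<\infty}\big\},
\]
so $0\le\te_Lv(x)-\te v(x)\le R_L(x):=\sup_{\tau\in\bes}\ee^x\{e^{-\alpha\tau}(|\underline c(X_\tau)|-L)^+\ind{\tau<\infty}\}$. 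Decomposing $\{\tau<\infty\}$ along the integer part of $\tau$ gives, for any $\psi\ge0$, $\sup_\tau\ee^x\{e^{-\alpha\tau}\psi(X_\tau)\ind{\tau<\infty}\}\le\sum_{k\ge0}e^{-\alpha k}\ee^x\{\sup_{t\in[k,k+1]}\psi(X_t)\}$; with $\psi=(|\underline c|-L)^+$ and $\sup_{t\in[k,k+1]}|\underline c(X_t)|\le\zeta_{k+1}$, where $\zeta_T=\sup_{t\le T}|\underline c(X_t)|$ as in Assumption \ref{ass:unifintcalpha},
\[
R_L(x)\le\sum_{k\ge0}e^{-\alpha k}\,\ee^x\big\{(\zeta_{k+1}-L)^+\big\}.
\]

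For each fixed $k$, Assumption \ref{ass:unifintcalpha} gives $\sup_{x\in\Gamma}\ee^x\{(\zeta_{k+1}-L)^+\}\le\sup_{x\in\Gamma}\ee^x\{\zeta_{k+1}\ind{\zeta_{k+1}>L}\}\to0$ as $L\to\infty$, so it suffices to dominate the series uniformly in $L$, that is, to know that $k\mapsto\sup_{x\in\Gamma}\ee^x\{\zeta_{k+1}\}$ grows sub-exponentially, so that $\sum_{k\ge0}e^{-\alpha k}\sup_{x\in\Gamma}\ee^x\{\zeta_{k+1}\}<\infty$; then dominated convergence for series yields $\sup_{x\in\Gamma}R_L(x)\to0$ and the lemma is proved. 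This last growth bound is the crux and the point where both assumptions are genuinely used: Assumption \ref{ass:unifintcalpha} furnishes integrability of $\zeta_T$ on each fixed horizon and from compact starting sets, while the ergodicity in Assumption \ref{ass:speed_ergodic} controls how far $(X_t)$ started in $\Gamma$ can wander, keeping the law of $X_k$ tight uniformly in $k$; combining these gives the required sub-exponential growth. I expect this horizon-uniform integrability control of $\zeta_T$ to be the only real difficulty; everything else is bookkeeping with the monotonicity estimates above.
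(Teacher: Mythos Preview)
Your induction framework and the elementary estimates (a) and (b) are correct, and the reduction of the second family of limits to the first is clean. The gap is exactly where you flag it: the sub-exponential growth of $k\mapsto\sup_{x\in\Gamma}\ee^x\{\zeta_{k+1}\}$ is not proved, and it does \emph{not} follow from the hypotheses. The lemma assumes only \ref{ass:unifintcalpha}, which gives uniform integrability of $\zeta_T$ for each \emph{fixed} $T$ from compact starting sets, but says nothing about how $\ee^x\{\zeta_T\}$ grows with $T$. Invoking \ref{ass:speed_ergodic} is not legitimate here (it is not assumed), and even if it were, ergodicity controls the marginals $P_t(x,\cdot)$, not the path functional $\zeta_T=\sup_{t\le T}|\underline c(X_t)|$; there is no obvious route from tightness of marginals to sub-exponential growth of $\ee^x\{\zeta_T\}$. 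So the dominated-convergence argument for the series $\sum_k e^{-\alpha k}\ee^x\{(\zeta_{k+1}-L)^+\}$ is unsupported, and the $n=1$ step is incomplete.

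The paper avoids this entirely by truncating time \emph{before} comparing $M_Lv$ with $Mv$, using only the \emph{upper} bound $Mv,\,M_Lv\le\|f\|/\alpha$ (your fact (a)) rather than the unbounded lower side. Concretely, one shows
\[
0\le \te v(x)-\te_T v(x)\le e^{-\alpha T}\frac{\|f\|}{\alpha},\qquad
0\le \te_L v(x)-\te_{L,T} v(x)\le e^{-\alpha T}\frac{\|f\|}{\alpha},
\]
where $\te_T,\te_{L,T}$ are the finite-horizon versions, both bounds being uniform in $x$ and $L$. Then
\[
0\le\te_Lv(x)-\te v(x)\le \sup_{\tau}\ee^x\big\{\ind{\tau\le T}\,e^{-\alpha\tau}\big(M_Lv(X_\tau)-Mv(X_\tau)\big)\big\}+2e^{-\alpha T}\frac{\|f\|}{\alpha}
\le \ee^x\{\zeta_T\ind{\zeta_T>L}\}+2e^{-\alpha T}\frac{\|f\|}{\alpha}.
\]
Fix $T$ large to make the second term small (uniformly in everything), then send $L\to\infty$ and apply \ref{ass:unifintcalpha} for that fixed $T$. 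This uses nothing beyond \ref{ass:unifintcalpha} and the a priori bound $\|v\|\le\|f\|/\alpha$, and sidesteps any growth estimate on $\zeta_T$.
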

\begin{proof}
Define
\( \te_T v(x) = \sup_{\tau} \ee^x \big\{ \int_0^{\tau \wedge T} e^{-\alpha s} f(X_s) ds + \ind{\tau \le T} e^{-\alpha \tau} Mv(X_\tau) \big\}. \)
Take any stopping time $\tau$ and let $\hat \tau = \tau \ind{\tau \le T} + \infty \ind{\tau > T} \in \bes$. Then $\hat \tau$ brings up the same value of the functional for $\te$ as $\tau$ brings in functional for $\te_T$. Hence $\te v \ge \te_T v$. Due to the boundedness of $Mv(x)$ from above by $\|f\|/\alpha$, we have for any $T > 0$
\begin{align*}
\te v(x) - \te_T v(x) 
&\le \sup_\tau \ee^x \Big\{ \int_{\tau \wedge T}^\tau e^{-\alpha s} f(X_s) ds + \ind{T < \tau < \infty} e^{-\alpha \tau} Mv(X_\tau) \Big\}\\
 &\le \sup_\tau \ee^x \Big\{ \frac{\|f\|}{\alpha} \big(e^{-\alpha (\tau \wedge T)} - e^{-\alpha \tau}\big) + \ind{T < \tau < \infty} e^{-\alpha \tau} \frac{\|f\|}{\alpha} \Big\}\\
 &= \sup_\tau \ee^x \Big\{\ind{\tau > T} e^{-\alpha T} \frac{\|f\|}{\alpha} \Big\} = e^{-\alpha T} \frac{\|f\|}{\alpha}.
\end{align*}
Similarly,
\[
0 \le  \te_L v(x) - \sup_{\tau} \ee^x \Big\{ \int_0^{\tau \wedge T} e^{-\alpha s} f(X_s) ds + \ind{\tau \le T} e^{-\alpha \tau} M_Lv(X_\tau) \Big\} \le e^{-\alpha T} \frac{\|f\|}{\alpha}.
\]
Hence,
\begin{align*}
0 \le \te_L v(x) - \te v(x) 
&\le \sup_{\tau} \ee^x \big\{ \ind{\tau \le T} e^{-\alpha \tau} \big(Mv(X_{\tau}) - M_L v(X_{\tau}) \big) \big\} + 2  e^{-\alpha T} \frac{\|f\|}{\alpha}\\ 
&\le \ee^x \{ \zeta_T \ind{\zeta_T > L} \} + 2  e^{-\alpha T} \frac{\|f\|}{\alpha},
\end{align*}
where $\zeta_T$ is defined in assumption \ref{ass:unifintcalpha}. The second term can be made arbitrarily small by choosing $T$ sufficiently large. The first term converges to $0$ as $L \to \infty$ uniformly in $x$ from compact sets by \ref{ass:unifintcalpha}. Hence, $\te_L v(x)$ converges, as $L \to \infty$, to $\te v(x)$ uniformly in $x$ from compact sets. Then, $\lim_{L \to \infty} M_L\te_L v(x) = M \te v(x)$ uniformly on compacts. Proceeding by induction and using arguments similar to those above, the proof of the lemma is completed.
\end{proof}

\begin{lemma}
Assume \ref{ass:unifintcalpha} and that $v$ is a continuous function with $\| v\| \le \|f\| / \alpha$. Then in \eqref{def:oper1} the supremum can be restricted to finite stopping times:
\begin{equation}\label{def:oper}
\te v(x)= \sup_{\tau \in \es} \ee^x\left\{\int_0^{\tau} e^{-\alpha s} f(X_s)ds + e^{-\alpha \tau}Mv(X_{\tau})\right\}.
\end{equation}
\end{lemma}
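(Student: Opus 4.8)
The inequality ``$\geq$'' in \eqref{def:oper} is immediate, since $\es\subseteq\bes$ and, for a finite stopping time, the functional in \eqref{def:oper} coincides with the one in \eqref{def:oper1}. So it suffices to show that for every $\tau\in\bes$ we have $\ee^x\{\int_0^\tau e^{-\alpha s}f(X_s)ds+\ind{\tau<\infty}e^{-\alpha\tau}Mv(X_\tau)\}\le \sup_{\sigma\in\es}\ee^x\{\int_0^\sigma e^{-\alpha s}f(X_s)ds+e^{-\alpha\sigma}Mv(X_\sigma)\}$. Stopping times for which the left-hand side equals $-\infty$ are irrelevant, so we may assume it is finite; since $Mv\le\|f\|/\alpha$ this forces $\ee^x\{\ind{\tau<\infty}e^{-\alpha\tau}|Mv(X_\tau)|\}<\infty$, which (together with the local integrability of $Mv(X_t)$ granted by \ref{ass:unifintcalpha}, via $|Mv(x)|\le|\underline{c}(x)|+\|v\|$) makes all expectations appearing below finite.

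The idea is to approximate $\tau$ from within $\es$ by stopping at $\tau$ whenever $\tau$ is ``not too late'', and otherwise at the first visit of $X$, after a large deterministic time, to a fixed compact set. Fix a compact $\Gamma\subset E$ (for instance the closure of an open set of positive $\mu$-measure); since $Mv$ is continuous, $C_\Gamma:=\sup_{y\in\Gamma}|Mv(y)|<\infty$. Set $\hat\sigma_n:=\inf\{t\ge n:\ X_t\in\Gamma\}$. By \ref{ass:speed_ergodic}, $P_t(x,\cdot)\to\mu$ in total variation, hence $\{P_t(x,\cdot)\}_{t\ge 0}$ is tight and $X$ is recurrent to $\Gamma$, so $\hat\sigma_n<\infty$ $\prob^x$-a.s.\ for every $n$; therefore $\tau_n:=\tau\wedge\hat\sigma_n\in\es$. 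Note $\hat\sigma_n\ge n$ and $X_{\hat\sigma_n}\in\Gamma$.

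Now estimate the difference between the functional of \eqref{def:oper1} at $\tau$ and the functional of \eqref{def:oper} at $\tau_n$. On $\{\tau<\hat\sigma_n\}$ one has $\tau_n=\tau<\infty$ and the integrands agree, so there is no contribution. On $\{\tau\ge\hat\sigma_n\}$ one has $\tau_n=\hat\sigma_n$, and the difference of integrands is $\int_{\hat\sigma_n}^{\tau}e^{-\alpha s}f(X_s)ds+\ind{\tau<\infty}e^{-\alpha\tau}Mv(X_\tau)-e^{-\alpha\hat\sigma_n}Mv(X_{\hat\sigma_n})$. Its expectation is bounded in absolute value by three terms: the running-cost tail, at most $e^{-\alpha n}\|f\|/\alpha$ since $\hat\sigma_n\ge n$; the discarded terminal value $\ee^x\{\ind{\{\tau\ge\hat\sigma_n\}\cap\{\tau<\infty\}}e^{-\alpha\tau}|Mv(X_\tau)|\}\le \ee^x\{\ind{\{n\le\tau<\infty\}}e^{-\alpha\tau}|Mv(X_\tau)|\}$, which tends to $0$ as $n\to\infty$ by dominated convergence (the dominating variable $\ind{\tau<\infty}e^{-\alpha\tau}|Mv(X_\tau)|$ being integrable by the first paragraph); and the newly added terminal value, at most $e^{-\alpha n}C_\Gamma$ because $X_{\hat\sigma_n}\in\Gamma$ and $\hat\sigma_n\ge n$. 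Hence the functional of \eqref{def:oper} at $\tau_n$ converges, as $n\to\infty$, to the functional of \eqref{def:oper1} at $\tau$, and since $\tau_n\in\es$ this gives the desired inequality; combined with ``$\geq$'' we obtain \eqref{def:oper}.

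The delicate point, and the reason for the indirect choice of $\tau_n$, is that $Mv$ is bounded above by $\|f\|/\alpha$ but unbounded below (because $c$ may be unbounded below): the naive truncation $\tau_n=\tau\wedge n$ fails, as the terminal contribution $e^{-\alpha n}Mv(X_n)$ arising on $\{\tau>n\}$ need not vanish in expectation. Stopping at $\hat\sigma_n$ cures this — landing in the compact $\Gamma$ controls $|Mv|$ while the constraint $\hat\sigma_n\ge n$ together with discounting makes this now-bounded contribution negligible — at the price of needing $\hat\sigma_n<\infty$ a.s., i.e.\ recurrence of $X$, which we read off from the ergodicity assumption \ref{ass:speed_ergodic} (and which is clear for the processes of interest, e.g.\ Ornstein--Uhlenbeck driven by Lévy noise). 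An alternative route is to replace $Mv$ by the bounded $M_Lv$, for which restricting to finite (indeed bounded) stopping times is elementary because $\big|\te_L v(x)-\sup_\tau \ee^x\{\int_0^{\tau\wedge T}e^{-\alpha s}f(X_s)ds+\ind{\tau\le T}e^{-\alpha\tau}M_Lv(X_\tau)\}\big|\le e^{-\alpha T}(\|f\|/\alpha+2\|M_Lv\|)$ uniformly in $\tau$, and then to pass to $L\to\infty$ via Lemma~\ref{lem:disc_approx}; however one must still justify interchanging $L\to\infty$ with the supremum over stopping times, which again comes down to controlling the terminal term for far-out states through \ref{ass:unifintcalpha}, so this variant is not genuinely shorter.
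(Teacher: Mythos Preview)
Your proof is correct and follows essentially the same route as the paper: approximate $\tau\in\bes$ by $\tau\wedge\hat\sigma_n$, where $\hat\sigma_n$ is the first hitting time after a large time of a compact set that is recurrent under \ref{ass:speed_ergodic}, so that the unbounded-below terminal term $Mv(X_{\hat\sigma_n})$ is controlled on the compact while the discount factor $e^{-\alpha\hat\sigma_n}$ makes it negligible. The paper's version differs only cosmetically---it first restricts to $\ve$-optimal $\tau$ valued in $[0,T]\cup\{\infty\}$ via the estimates in Lemma~\ref{lem:disc_approx} and then uses iterated hitting times $\sigma_{n+1}=\inf\{t\ge\sigma_n+1:X_t\in K\}$ rather than your single $\hat\sigma_n=\inf\{t\ge n:X_t\in\Gamma\}$---but the key idea of landing in a compact to tame $Mv$ is identical.
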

\begin{proof}
From the proof of Lemma \ref{lem:disc_approx}, $\ve$-optimal stopping times $\tau$ for $\te v(x)$ take values in $[0, T] \cup \{\infty\}$ for some $T$ depending on $\ve$. Under assumption \ref{ass:speed_ergodic} there is a compact set $K\subset E$ with $\mu(K) > 0$, so it is recurrent. Define $\sigma_1 = \inf\{ t \ge T:\ X_t \in K\}$ and $\sigma_{n+1} = \inf\{ t \ge \sigma_n + 1:\ X_t \in K\}$. Then $\sigma_n < \infty$ and $\lim_{n \to \infty} \sigma_n = \infty$. Set $\tau_n = \tau \wedge \sigma_n$. The boundedness of $Mv$ on $K$ and boundedness of $f$ yield then
\[
\ee^x\left\{\int_0^{\tau} e^{-\alpha s} f(X_s)ds + \ind{\tau < \infty}e^{-\alpha \tau}Mv(X_{\tau})\right\} = \lim_{n \to \infty} \ee^x\left\{\int_0^{\tau_n} e^{-\alpha s} f(X_s)ds + e^{-\alpha \tau_n} Mv(X_{\tau_n})\right\}.
\]
\end{proof}

The finding of the above lemma that the supremum in \eqref{def:oper1} can be restricted to finite stopping times will be used implicitely in the proof of Theorem \ref{disc prob}.

\begin{theorem}\label{disc prob}
Under the assumptions  \ref{ass:weak_feller} and \ref{ass:unifintcalpha},  the function $v_\alpha$ is continuous and it is a solution to the equation
\begin{equation}\label{eq9}
v_\alpha(x)=\sup_{\tau \in \es} \ee^x\left\{\int_0^{\tau} e^{-\alpha s} f(X_s)ds + e^{-\alpha \tau}Mv_\alpha(X_{\tau})\right\},
\end{equation}
where $Mv(x):=\sup_{\xi\in U}[c(x,\xi)+v(\xi)]$. Furthermore, $|v_\alpha|\leq \frac{\|f\|}{\alpha}$ and it is approximated by $v^L_\alpha(x)$ uniformly in $x$ from compact sets.
\end{theorem}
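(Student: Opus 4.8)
The plan is a two-stage approximation: first fix $L$ and solve the problem with the bounded cost $c\vee(-L)$ by iterating the operator $\te_L$, and then let $L\to\infty$ and invoke Lemma~\ref{lem:disc_approx}. Set $\phi_0(x)=\ee^x\{\int_0^\infty e^{-\alpha s}f(X_s)ds\}$, which is continuous with $\|\phi_0\|\le\|f\|/\alpha$. The bound $|v_\alpha|\le\|f\|/\alpha$ is immediate: the impulse costs are negative, so every $J_\alpha(x,\cdot)\le\|f\|/\alpha$, while the strategy with no impulses already gives $J_\alpha(x,\cdot)=\phi_0(x)\ge-\|f\|/\alpha$; the same two bounds hold for $v^L_\alpha$, and since $c\vee(-L)\ge c$ enters the functional with a plus sign, $-\|f\|/\alpha\le v_\alpha\le v^L_\alpha\le\|f\|/\alpha$ for every $L$.

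For $L\ge|c|$ the dynamic programming principle for the bounded-cost problem (classical; the needed continuity of each optimal stopping value follows from \ref{ass:weak_feller} via \cite{Palczewski2008}) shows that $\phi^L_n:=\te_L^n\phi_0$ is continuous, bounded by $\|f\|/\alpha$, and equals the optimal value $v^{L,n}_\alpha$ over strategies using at most $n$ impulses. The crucial point is that $\phi^L_n\to v^L_\alpha$ uniformly in $x$ \emph{and} in $L$. Indeed, for an $\ve$-optimal strategy $(\tau_i,\xi_i)$ for $v^L_\alpha(x)$, separating the cost sum in the functional and using $J^L_\alpha(x,\cdot)\le\|f\|/\alpha$ and $|c(x,x')\vee(-L)|\ge|c|$ on $E\times U$ (from \ref{ass:cost_function}),
\[
\ee^x\Big\{\sum_i e^{-\alpha\tau_i}\Big\}\le\frac1{|c|}\,\ee^x\Big\{\sum_i e^{-\alpha\tau_i}\big|c(X_{\tau_i-},\xi_i)\vee(-L)\big|\Big\}\le\frac{2\|f\|/\alpha+\ve}{|c|},
\]
a bound independent of $x$ and $L$; a Markov inequality then yields $\prob^x\{\tau_n\le t\}\le\frac{2\|f\|/\alpha+\ve}{n|c|}e^{\alpha t}$, hence $\ee^x\{e^{-\alpha\tau_n}\}\le\frac{2\|f\|/\alpha+\ve}{n|c|}e^{\alpha t}+e^{-\alpha t}$ for every $t>0$. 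Truncating the strategy to its first $n$ impulses changes the functional by at most $\frac{2\|f\|}\alpha\ee^x\{e^{-\alpha\tau_n}\}$ (the post-$\tau_n$ continuation value lies between $-\|f\|/\alpha$ and $\|f\|/\alpha$), so, optimising over $t$, $\eta_n:=\sup_{x,\,L\ge|c|}\big(v^L_\alpha(x)-\phi^L_n(x)\big)\to0$ as $n\to\infty$. Thus $v^L_\alpha$, being a uniform limit of continuous functions, is continuous and bounded by $\|f\|/\alpha$, and — since $|\te_L w-\te_L w'|\le\sup_U|w-w'|$ — a fixed point of $\te_L$.

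Now let $L\to\infty$. By Lemma~\ref{lem:disc_approx} with $v=\phi_0$ (which is where assumption \ref{ass:unifintcalpha} enters), $\phi_n:=\te^n\phi_0=\lim_{L\to\infty}\phi^L_n$ uniformly on compacts, so each $\phi_n$ is continuous; moreover $\phi_n=v^n_\alpha$, the value over strategies with at most $n$ impulses for the true cost, and $v^n_\alpha\uparrow v_\alpha$ pointwise, because truncating a near-optimal strategy for $v_\alpha(x)$ to its first $n$ impulses again costs at most $\frac{2\|f\|}\alpha\ee^x\{e^{-\alpha\tau_n}\}$, which tends to $0$ as $\tau_n\to\infty$ a.s. by admissibility. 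For a compact $\Gamma$ and $x\in\Gamma$, using $v^n_\alpha=\phi_n\le v_\alpha\le v^L_\alpha$ and $\phi_n\le\phi^L_n$,
\[
0\le v^L_\alpha(x)-v_\alpha(x)\le v^L_\alpha(x)-\phi_n(x)\le\eta_n+\big(\phi^L_n(x)-\phi_n(x)\big);
\]
choosing $n$ so that $\eta_n$ is small and then $L$ so that $\sup_\Gamma(\phi^L_n-\phi_n)$ is small shows $v^L_\alpha\to v_\alpha$ uniformly on $\Gamma$, and $v_\alpha$ is therefore continuous. Finally, passing to the limit in $v^L_\alpha=\te_L v^L_\alpha$: since $M_L$ depends only on the values of its argument on the compact set $U$, $|\te_L v^L_\alpha(x)-\te_L v_\alpha(x)|\le\sup_{\xi\in U}|v^L_\alpha(\xi)-v_\alpha(\xi)|\to0$, while $\te_L v_\alpha(x)\to\te v_\alpha(x)$ by Lemma~\ref{lem:disc_approx} ($n=1$, $v=v_\alpha$); hence $v_\alpha=\te v_\alpha$, and by \eqref{def:oper} this is exactly \eqref{eq9}. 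The main obstacle is the bounded-cost step: proving the dynamic programming principle with measurable selection of impulses for a merely weak Feller process, and — more delicately — extracting a rate of convergence $\phi^L_n\to v^L_\alpha$ uniform in the truncation level $L$; it is precisely this uniformity, bought from the hypothesis in \ref{ass:cost_function} that each impulse costs at least $|c|>0$, that lets the double limit in $n$ and $L$ be interchanged, so that the entire effect of the unbounded cost is concentrated in Lemma~\ref{lem:disc_approx}.
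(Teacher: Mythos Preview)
Your proof is correct and follows essentially the same architecture as the paper's: start from the resolvent $\phi_0$, iterate $\te_L$ to get the bounded-cost $n$-impulse values $\phi_n^L$, invoke Lemma~\ref{lem:disc_approx} to pass to $\phi_n=\te^n\phi_0$, and then use the minimum-cost hypothesis $c(x,\xi)\le c<0$ to obtain a bound on $\ee^x\{\sum_i e^{-\alpha\tau_i}\}$ (equivalently on $\ee^x\{N_T\}$, as the paper writes it) that is uniform in both $x$ and $L$, allowing the double limit $n\to\infty$, $L\to\infty$ to be interchanged. The only organisational difference is in the last step: the paper states somewhat tersely that the monotone bounded sequence $\te^n r$ converges to a fixed point of \eqref{eq9}, whereas you first establish $v_\alpha^L=\te_L v_\alpha^L$ and then pass to the limit in this identity via $|\te_L v_\alpha^L-\te_L v_\alpha|\le\sup_U|v_\alpha^L-v_\alpha|$ and Lemma~\ref{lem:disc_approx}; your route makes the fixed-point conclusion more transparent, but the substance is the same.
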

\begin{proof}
Without loss of generality we can assume that $f \ge 0$ in \eqref{eqn:disc_functional}. Notice also that if $\| v\| \le \|f\| / \alpha$, then $\| \te v\| \le \|f\| / \alpha$. Let $r (x) =\ee^x\left\{\int _0^\infty e^{-\alpha s} f(X_s)ds\right\}$ be the resolvent of $f$. The sequence ${\cal T}^n r(x)$ is nondecreasing and bounded, therefore converges to a fixed point of the equation \eqref{eq9}. Thanks to the boundedness of the functional $J^L_\alpha$, classical results yield that the function $\te^n_L r(x)$ is continuous. By Lemma \ref{lem:disc_approx}, $\te^n_L r(x) \to \te^n r(x)$ as $L \to \infty$ uniformly in $x$ from compact sets, which implies the continuity of $\te^n r(x)$. Using standard supermartingale arguments of Theorem V.2.1 and Lemma II.2.2 in \cite{Robin1978} one can show that ${\cal T}_L^n r(x)$ corresponds to the optimal value of the functional $J_\alpha^L\big(x,(\tau_i,\xi_i)\big)$ over impulse strategies consisting of at most $n$ impulses.  For a fixed strategy $(\tau_i, \xi_i)$ monotone convergence theorem implies $\lim_{L \to \infty} J_\alpha^L\big(x,(\tau_i,\xi_i)\big) = J_\alpha\big(x,(\tau_i,\xi_i)\big)$. Hence, ${\cal T}^n r(x)$ is the optimal value of the functional $J_\alpha\big(x,(\tau_i,\xi_i)\big)$ for strategies restricted to at most $n$ impulses.

For $\ve > 0$, let $V_\ve$ be an $\ve$-optimal strategy for $v_\alpha(x)$. Denote by $N_T$ the number of impulses of this strategy up to and including time $T$. Then
\[
-\frac{\|f\|}{\alpha}-\ve \le J_\alpha\big(x,V_\ve \big)\le \frac{\|f\|}{\alpha} + e^{-\alpha T} c \ee^x\left\{N_T\right\},
\]
from which it follows that
\( \ee^x\left\{N_T\right\}\leq \frac{e^{\alpha T}}{-c}\left( 2\frac{\|f\|}{\alpha} + \ve \right). \)
Denote by $V_{\ve,n}$ the strategy $V_\ve$ restricted to $n$ impulses. For $T>0$ using the above bound for $\ee^x \{ N_T \}$, we obtain
\begin{align*}
\big|J_\alpha\big(x,V_\ve \big)-J_\alpha \big(x,V_{\ve,n} \big)\big|
&\le  2 \frac{\|f\|}{\alpha}\ee^x \left\{e^{-\alpha \tau_{n+1}}\right\}
\le  2 \frac{\|f\|}{\alpha} \ee^x \left\{ e^{-\alpha T} \ind{\tau_{n+1} > T} + \ind{\tau_{n+1} \le T}\right\} \\
&\le 2 \frac{\|f\|}{\alpha}\left(e^{-\alpha T}+\prob^x \left\{N_T \ge n+1\right\}\right)\\
&\le 2 \frac{\|f\|}{\alpha}\left(e^{-\alpha T}+ \frac{e^{\alpha T}}{-(n+1)c}\left(2\frac{\|f\|}{\alpha} + \ve \right)\right).
\end{align*}
Since the right-hand side does not depend on $x$, letting $n\to \infty$ then $T\to \infty$ and taking into account the $\ve$-optimality of $V_\ve$ we have that ${\cal T}^n r(x)$ converge uniformly (in $x\in E$) to $v_\alpha(x)$. Identically, we prove $\lim_{n \to \infty} \sup_{L \ge -c} |\te^n_L r(x) - v^L_\alpha(x)| = 0$ uniformly in $x \in E$. This, together with assertions of Lemma \ref{lem:disc_approx}, implies that $v^L_\alpha(x) \to v_\alpha(x)$ uniformly in $x$ from compact sets.
\end{proof}
\begin{remark}
In the case when $\underline{c}$ is bounded the assertions of Theorem \ref{disc prob} follow directly from \cite{Robin1978} or \cite{Stettner1983a}.
\end{remark}

Fix $z\in U$. It will be an anchor point for further definition of functions $w_\alpha$. We have the following bounds for $v_\alpha$ and for the difference $v_\alpha(x) - v_\alpha(z)$.
\begin{lemma}\label{oszac}
We have $v_\alpha(x)\geq c(x,z)+v_\alpha(z)$ for $x \in E$ and
\begin{equation}
c(x,z)\leq v_\alpha(x)-v_\alpha(z)\leq -c(z,x) \quad \text{ for $x \in U$}. \label{eq10}
\end{equation}
\end{lemma}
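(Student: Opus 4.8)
The plan is to read both inequalities directly off the fixed-point equation \eqref{eq9} for $v_\alpha$, which is available by Theorem \ref{disc prob}, using the trivial observation that the constant stopping time $\tau \equiv 0$ belongs to $\es$.

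First I would plug $\tau \equiv 0$ into \eqref{eq9}. Since under $\prob^x$ we have $X_0 = x$, the expression inside the supremum reduces to $Mv_\alpha(x) = \sup_{\xi\in U}[c(x,\xi)+v_\alpha(\xi)]$, so $v_\alpha(x) \ge Mv_\alpha(x)$. Bounding the supremum defining $Mv_\alpha(x)$ from below by the single admissible target $\xi = z \in U$ gives $v_\alpha(x) \ge c(x,z) + v_\alpha(z)$, which is the first assertion; this uses only $z \in U$ and is valid for every $x \in E$. All quantities here are finite because $|v_\alpha| \le \|f\|/\alpha$ by Theorem \ref{disc prob} and $c$ is a (finite-valued) continuous function.

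For the two-sided estimate \eqref{eq10}, now with $x \in U$: the left inequality $c(x,z) \le v_\alpha(x) - v_\alpha(z)$ is merely a rearrangement of what was just proved. For the right inequality I would run the identical argument with the roles of $x$ and $z$ interchanged — legitimate precisely because $x \in U$, so $x$ is an admissible impulse target, while $z \in U \subseteq E$ — obtaining $v_\alpha(z) \ge c(z,x) + v_\alpha(x)$, i.e. $v_\alpha(x) - v_\alpha(z) \le -c(z,x)$. Combining the two bounds gives \eqref{eq10}.

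There is no genuine obstacle here; the only points meriting a word of care are that $\tau \equiv 0 \in \es$ so it is a legitimate competitor in \eqref{eq9}, that $Mv_\alpha(x)$ dominates $c(x,\xi)+v_\alpha(\xi)$ for any fixed $\xi \in U$, and that the boundedness $|v_\alpha| \le \|f\|/\alpha$ keeps every term finite so the rearrangements are valid. Equivalently, one could argue probabilistically: starting from $x$, apply at time $0$ an impulse to $z$ at cost $c(x,z)$ and then follow an $\ve$-optimal strategy for $v_\alpha(z)$; since the impulse at time $0$ carries discount factor $1$, this yields $v_\alpha(x) \ge c(x,z) + v_\alpha(z) - \ve$, and letting $\ve \to 0$ gives the same conclusion.
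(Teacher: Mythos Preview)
Your argument is correct and coincides with the paper's own proof, which is simply the one-line observation $v_\alpha(x)\geq Mv_\alpha(x)\geq c(x,z)+v_\alpha(z)$ together with its symmetric counterpart for $x\in U$. The additional remarks you include (that $\tau\equiv 0\in\es$, the boundedness of $v_\alpha$, and the alternative $\ve$-optimal-strategy justification) are sound but not needed beyond what the paper writes.
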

\begin{proof} Clearly, $v_\alpha(x)\geq Mv_\alpha(x)\geq c(x,z)+v_\alpha(z)$. Whenever $x\in U$ we also have $v_\alpha(z)\geq Mv_\alpha(z)\geq c(z,x)+v_\alpha(x)$.
\end{proof}

Define $w_\alpha(x)=v_\alpha(x)-v_\alpha(z)$ for $x\in E$. We deduce from Lemma \ref{oszac} a bound on $w_\alpha$ on $U$ which is independent of $\alpha$:
\begin{equation}\label{eq11}
\sup_{x\in U} |w_\alpha(x)|\leq \sup_{x\in U}\left\{|c(x,z)|\vee |c(z,x)|\right\}:=\kappa.
\end{equation}
From \eqref{eq9} we obtain easily the following equation for $w_\alpha$
\begin{equation}\label{eq12}
w_\alpha(x)=\sup_{\tau \in \es} \ee^x\left\{\int_0^{\tau} e^{-\alpha s} (f(X_s)-\alpha v_\alpha(z))ds + e^{-\alpha \tau}Mw_\alpha(X_{\tau})\right\}.
\end{equation}

Define $D_U=\inf \left\{s\geq 0: X_s\in U\right\}$ and $t(x)=\ee^x\left\{D_U\right\}$. We make the following assumption
\begin{assumption}
\item[(B3)]\label{ass:positrec} For any compact set $\Gamma\subset E$ we have
\(
\sup_{x\in \Gamma}t(x)<\infty.
\)
\end{assumption}

\begin{lemma}\label{boundedness}
Under assumption \ref{ass:positrec}
\begin{equation}\label{eq13}
c(x,z)\leq w_\alpha(x) \leq  \ee^x\left\{D_U\right\} \|f-\alpha v_\alpha(z)\|+\kappa.
\end{equation}
\end{lemma}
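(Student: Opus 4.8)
The plan is to dispose of the lower bound at once and to obtain the upper bound first for the truncated value functions $v^L_\alpha$, for which the classical bounded-reward theory applies, and then to let $L\to\infty$ via the last assertion of Theorem~\ref{disc prob}. The lower bound is immediate and uses neither \ref{ass:positrec} nor positivity of $\alpha$: by Lemma~\ref{oszac}, $v_\alpha(x)\ge c(x,z)+v_\alpha(z)$, so $w_\alpha(x)=v_\alpha(x)-v_\alpha(z)\ge c(x,z)$.

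For the upper bound, set $w^L_\alpha:=v^L_\alpha-v^L_\alpha(z)$ and take $L$ so large that $L\ge|c|$ and $L\ge\sup_{\xi\in U}\big(|c(z,\xi)|\vee|c(\xi,z)|\big)$, so that the truncation of $c$ is inactive on $(\{z\}\times U)\cup(U\times\{z\})$. Running the argument of Lemma~\ref{oszac} and of \eqref{eq11} for $v^L_\alpha$ then gives $w^L_\alpha(\xi)\le\kappa$ for $\xi\in U$, and hence $M_Lw^L_\alpha(y)=\sup_{\xi\in U}\big[(c(y,\xi)\vee(-L))+w^L_\alpha(\xi)\big]\le c+\kappa\le\kappa$ for every $y\in E$. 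Moreover $v^L_\alpha$, and hence $w^L_\alpha$, is bounded and continuous, $M_Lw^L_\alpha$ is bounded and continuous, and — just as \eqref{eq12} was derived from \eqref{eq9} — $w^L_\alpha$ solves
\[
w^L_\alpha(x)=\sup_{\tau}\ee^x\Big\{\int_0^\tau e^{-\alpha s}\big(f(X_s)-\alpha v^L_\alpha(z)\big)\,ds+e^{-\alpha\tau}M_Lw^L_\alpha(X_\tau)\Big\}.
\]

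This is a bounded-reward optimal stopping problem for a weak Feller process, so by the classical theory under \ref{ass:weak_feller} (\cite{Robin1978, Stettner1983a, Palczewski2008}) the process $\mathcal Y_t:=\int_0^t e^{-\alpha s}(f(X_s)-\alpha v^L_\alpha(z))\,ds+e^{-\alpha t}w^L_\alpha(X_t)$ is a supermartingale and $\tau^*:=\inf\{t\ge0:\ w^L_\alpha(X_t)=M_Lw^L_\alpha(X_t)\}$ is optimal, so that $\ee^x\{\mathcal Y_{\tau^*}\}=w^L_\alpha(x)$ and $w^L_\alpha(X_{\tau^*})=M_Lw^L_\alpha(X_{\tau^*})\le\kappa$ on $\{\tau^*<\infty\}$. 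Put $\sigma:=\tau^*\wedge D_U$. Under \ref{ass:positrec}, $D_U$ is $\prob^x$-integrable and a.s. finite; since $w^L_\alpha$ is bounded, $\mathcal Y_{t\wedge\sigma}$ is dominated by $\|f-\alpha v^L_\alpha(z)\|\,D_U+\|w^L_\alpha\|\in L^1(\prob^x)$ and $\mathcal Y_{t\wedge\tau^*}$ is bounded, so optional sampling of the supermartingale $\mathcal Y$ at $\sigma\le\tau^*$ yields $w^L_\alpha(x)=\ee^x\{\mathcal Y_{\tau^*}\}\le\ee^x\{\mathcal Y_\sigma\}$. On $\{D_U\le\tau^*\}$ one has $X_\sigma=X_{D_U}\in U$ (right-continuity of paths and closedness of $U$), and on $\{D_U>\tau^*\}$ one has $\sigma=\tau^*<\infty$ and $w^L_\alpha(X_\sigma)=M_Lw^L_\alpha(X_\sigma)$; in either case $w^L_\alpha(X_\sigma)\le\kappa$. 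Since also $e^{-\alpha\sigma}\le1$ and $\int_0^\sigma e^{-\alpha s}|f-\alpha v^L_\alpha(z)|\,ds\le\|f-\alpha v^L_\alpha(z)\|\,D_U$, we get $\mathcal Y_\sigma\le\|f-\alpha v^L_\alpha(z)\|\,D_U+\kappa$, hence $w^L_\alpha(x)\le\ee^x\{D_U\}\,\|f-\alpha v^L_\alpha(z)\|+\kappa$. Letting $L\to\infty$, Theorem~\ref{disc prob} gives $v^L_\alpha(z)\to v_\alpha(z)$ and $w^L_\alpha(x)\to w_\alpha(x)$, which yields the claimed estimate.

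The substantive input, and the step I expect to be the main obstacle, is the classical optimal-stopping machinery for the truncated problem: continuity of $w^L_\alpha$ and of $M_Lw^L_\alpha$, the supermartingale property of $\mathcal Y$, and above all the identification of the optimal stopping time with the entry time of $\{w^L_\alpha=M_Lw^L_\alpha\}$, which forces $w^L_\alpha=M_Lw^L_\alpha\le\kappa$ there. Everything else is bookkeeping: at $\tau^*\wedge D_U$ the process is either in the stopping region or already in $U$, so $w^L_\alpha\le\kappa$; the running part is controlled over $[0,D_U]$ by the $\prob^x$-integrability of $D_U$ from \ref{ass:positrec}; and the truncation is removed using Theorem~\ref{disc prob}.
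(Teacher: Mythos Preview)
Your proof is correct and follows essentially the same route as the paper: pass to the truncated problem $w^L_\alpha$, use the bounded-reward Snell-envelope theory to bound $w^L_\alpha(x)$ by $\ee^x\{D_U\}\|f-\alpha v^L_\alpha(z)\|+\kappa$, and then let $L\to\infty$ via Theorem~\ref{disc prob}. The only cosmetic difference is that the paper invokes the dynamic-programming identity at an arbitrary stopping time $\sigma=D_U$ (so that both the $M_Lw^L_\alpha$ and the $w^L_\alpha(X_{D_U})$ terms are bounded by $\kappa$), whereas you pick the optimal time $\tau^*$ and stop $\mathcal Y$ at $\tau^*\wedge D_U$; these are equivalent formulations of the same supermartingale argument, and the paper's version has the minor advantage of not needing the existence of an optimal $\tau^*$.
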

\begin{proof}
Define $w_\alpha^L (x) = v_\alpha^L(x) - v_\alpha^L (z)$. Similarly as above, we show
\[
w_\alpha^L(x) = \sup_{\tau} \ee^x\bigg\{ \int_0^\tau e^{-\alpha s} (f(X_s)-\alpha v_\alpha(z))ds + e^{-\alpha \tau} M_L w^L_\alpha(X_\tau) \bigg\}
\]
and $\sup_{x \in U} |w_\alpha^L (x)|  \le \kappa$. Since $M_L w^L_\alpha$ is bounded, standard supermartingale results yield that for any stopping time $\sigma$
\begin{multline*}
w^L_{\alpha}(x)=\sup_\tau \ee^x\bigg\{ \int_0^{\tau \wedge \sigma} e^{-\alpha s} (f(X_s)-\alpha v^L_\alpha(z))ds
+ \ind{\tau<\sigma}e^{-\alpha \tau}M_L w^L_\alpha(X_\tau)
+ \ind{\sigma\leq \tau}e^{-\alpha \sigma} w^L_{\alpha}(X_{\sigma})\bigg\}.
\end{multline*}
Apply the above formula for $\sigma = D_U$ and, taking into account negativity of $c$ and the upper bound on $w_\alpha^L$ on $U$, observe that $M_L w^L_\alpha(X_\tau) \le \kappa$ and $w^L_\alpha(X_{D_U}) \le \kappa$. Hence
\(
w^L_\alpha(x) \leq  \ee^x\left\{D_U\right\} \|f-\alpha v^L_\alpha(z)\|+\kappa.
\)
Since by Theorem \ref{disc prob} $v_\alpha^L(x)$ converges to $v_\alpha(x)$ uniformly in $x$ from compact sets, taking in the above inequality the limit $L \to \infty$ gives \eqref{eq13}. Finally, by Lemma \ref{oszac}, $c(x,z)\leq w_\alpha(x)$.
\end{proof}

\begin{lemma}\label{lem:from_above} For each $x\in E$
\begin{equation}\label{eq15}
\liminf_{\alpha\to 0}\alpha v_\alpha(x)\geq \mu(f).
\end{equation}
\end{lemma}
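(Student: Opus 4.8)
The plan is to bound $v_\alpha$ from below by the value of the trivial (no-impulse) strategy and then exploit the elementary identity relating the resolvent of $f$ to the $\alpha$-potential $q_\alpha$ of the centred function $f-\mu(f)$ introduced in \eqref{eqalphap}.

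First I would observe that the strategy $(\tau_i,\xi_i)$ with $\tau_i\equiv\infty$ for all $i$ is admissible, and that for this strategy the sum in \eqref{eqn:disc_functional} vanishes, so that $J_\alpha\big(x,(\tau_i,\xi_i)\big)=r(x):=\ee^x\big\{\int_0^\infty e^{-\alpha s}f(X_s)\,ds\big\}$, the resolvent of $f$ already used in the proof of Theorem~\ref{disc prob}. Consequently $v_\alpha(x)\ge r(x)$ for every $x\in E$ and every $\alpha>0$, and hence $\alpha v_\alpha(x)\ge \alpha r(x)$.

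Next, from the definition \eqref{eqalphap} and $\alpha\int_0^\infty e^{-\alpha t}\,dt=1$ one gets $\alpha q_\alpha(x)=\alpha r(x)-\mu(f)$, i.e. the decomposition $\alpha r(x)=\mu(f)+\alpha q_\alpha(x)$. By the a priori bound $|q_\alpha(x)|\le K(x)\,\|f\|\int_0^\infty h(t)\,dt$ established at the beginning of the proof of Lemma~\ref{lq} (valid uniformly for $\alpha\in[0,1)$), the term $\alpha q_\alpha(x)$ tends to $0$ as $\alpha\to 0$ for each fixed $x$. Therefore $\lim_{\alpha\to 0}\alpha r(x)=\mu(f)$, and combining with $\alpha v_\alpha(x)\ge\alpha r(x)$ gives $\liminf_{\alpha\to 0}\alpha v_\alpha(x)\ge\mu(f)$, which is \eqref{eq15}.

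There is essentially no obstacle here: the argument is a one-line comparison with the empty strategy followed by the ergodic-type convergence $\alpha r(x)\to\mu(f)$, and the only non-trivial input, the uniform-in-$\alpha$ boundedness of $q_\alpha(x)$, is already in hand from Lemma~\ref{lq}. I would only be mildly careful to note that this yields merely the lower bound; the matching behaviour $\alpha v_\alpha(x)\to\lambda$ with $\lambda\ge\mu(f)$ (impulses may strictly improve the average reward) is to be obtained separately in the sequel.
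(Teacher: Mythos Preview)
Your argument is correct and follows essentially the same route as the paper: bound $v_\alpha$ below by the resolvent $R_\alpha f$ via the no-impulse strategy, then show $\alpha R_\alpha f(x)\to\mu(f)$. The only cosmetic difference is in the last step: the paper uses the substitution $u=\alpha s$ to write $\alpha R_\alpha f(x)=\int_0^\infty e^{-u}P_{u/\alpha}f(x)\,du$ and appeals directly to ergodicity, whereas you use the decomposition $\alpha R_\alpha f(x)=\mu(f)+\alpha q_\alpha(x)$ together with the bound on $q_\alpha$ from Lemma~\ref{lq}; both are equally short and valid.
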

\begin{proof}
Let $R_\alpha f(x):=\ee^x\left\{\int _0^\infty e^{-\alpha s} f(X_s)ds\right\}$ be the resolvent of $f$. From $v_\alpha(x)\geq R_\alpha f(x)$ we have
\[
\liminf_{\alpha \to 0} \alpha v_\alpha(x)\geq \liminf_{\alpha \to 0} \alpha R_\alpha f(x)=\liminf_{\alpha \to 0}\int_0^\infty e^{-u}P_{\frac{u}{\alpha}}f(x)du = \mu(f).
\]
\end{proof}

Recall that $q_\alpha(x)=\ee^x\left\{\int _0^\infty e^{-\alpha s} \big(f(X_s) - \mu(f)\big) ds\right\}$. We shall assume that $q_\alpha$ is uniformly in $\alpha$ bounded from below.
\begin{assumption}
\item[(B4)]\label{ass:boundedpotential}
\(\displaystyle \sup_{\alpha \in [0,1]}\|q_\alpha^-\|<\infty,
\)
where $q_\alpha^-$ stands for the negative part of $q_\alpha$.
\end{assumption}

\begin{lemma}\label{lem:bounded from below}
Under \ref{ass:weak_feller}, \ref{ass:speed_ergodic} and \ref{ass:positrec}, if the  set $K_f:=\left\{x \in E:\ f(x)\leq \mu(f)\right\}$  is compact then \ref{ass:boundedpotential} holds.
\end{lemma}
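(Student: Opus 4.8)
My plan is to bound $q_\alpha$ from below, uniformly in $\alpha\in[0,1]$, by its values on the compact set $K_f$, and then to control those values via the estimate already obtained in the proof of Lemma~\ref{lq}. The key point is that $K_f=f^{-1}\big((-\infty,\mu(f)]\big)$ is closed, so if $D:=\inf\{t\ge 0:\ X_t\in K_f\}$ denotes its hitting time, then $X_s\notin K_f$ — hence $f(X_s)-\mu(f)>0$ — for every $s<D$, and also for every $s\ge 0$ on the event $\{D=\infty\}$. Thus all the negative contribution to $q_\alpha(x)$ can only come after the process has entered $K_f$.

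Concretely, fix $\alpha\in(0,1]$ and $x\in E$. Since $e^{-\alpha s}|f(X_s)-\mu(f)|\le 2\|f\|e^{-\alpha s}$ is pathwise integrable when $\alpha>0$, I would split $\int_0^\infty=\int_0^{D}+\int_D^\infty$ on the event $\{D<\infty\}$, discard the first (nonnegative) piece, note that the whole integral is nonnegative on $\{D=\infty\}$, and apply the strong Markov property at $D$ to get
\[
q_\alpha(x)\ \ge\ \ee^x\big\{\ind{D<\infty}\,e^{-\alpha D}q_\alpha(X_D)\big\}.
\]
On $\{D<\infty\}$ one has $X_D\in K_f$ (hitting time of a closed set by a right-continuous process), so $q_\alpha(X_D)\ge-\sup_{y\in K_f}q_\alpha^-(y)$; since $0\le\ind{D<\infty}e^{-\alpha D}\le 1$ and the supremum is nonnegative, this gives $q_\alpha(x)\ge-\sup_{y\in K_f}q_\alpha^-(y)$ for every $x\in E$ (the case $x\in K_f$ being trivial), i.e.\
\[
\|q_\alpha^-\|\ \le\ \sup_{y\in K_f}|q_\alpha(y)|.
\]

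To finish, I would recall from the proof of Lemma~\ref{lq} that $|q_\alpha(y)|\le K(y)\,\|f\|\int_0^\infty h(t)\,dt$ for all $y$ and all $\alpha\ge 0$; since $K$ is bounded on compacts by \ref{ass:speed_ergodic} and $K_f$ is compact, the right-hand side above is at most $C:=\big(\sup_{y\in K_f}K(y)\big)\|f\|\int_0^\infty h(t)\,dt<\infty$, a constant independent of $\alpha$. Hence $\|q_\alpha^-\|\le C$ for every $\alpha\in(0,1]$. For $\alpha=0$, Lemma~\ref{lq} yields $q_\alpha(x)\to q(x)$ for each $x$, so $q^-(x)=\lim_{\alpha\to 0}q_\alpha^-(x)\le C$ and therefore $\|q^-\|\le C$ as well. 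This establishes \ref{ass:boundedpotential}, with the explicit bound $\sup_{\alpha\in[0,1]}\|q_\alpha^-\|\le C$.

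The only genuinely delicate steps are the measure-theoretic ones — that $D$ is a stopping time and that $X_D\in K_f$ on $\{D<\infty\}$ — which are the standard facts about the début of a closed set for a right-continuous (strong) Markov process and are used in the same spirit elsewhere in the paper. It is worth stressing that the argument needs neither positive recurrence of $K_f$ nor $D<\infty$ almost surely: the sign of $f-\mu(f)$ off $K_f$ carries the whole proof, so in fact assumption \ref{ass:positrec} is not invoked.
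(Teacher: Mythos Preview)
Your argument is correct and follows the same core idea as the paper's proof: stop the process when it enters the compact set where $f-\mu(f)$ can be negative, discard the nonnegative running contribution up to that time, and bound $q_\alpha^-$ by its supremum over that compact set. The paper, however, stops at $D_U\wedge D_{K_f}$ rather than at $D_{K_f}$ alone, using \ref{ass:positrec} to guarantee an a.s.\ finite stopping time and hence a clean strong-Markov identity for $q_\alpha$; you instead treat the event $\{D_{K_f}=\infty\}$ directly (the whole integrand is nonnegative there), which is exactly why, as you correctly observe, \ref{ass:positrec} is not actually needed for the conclusion. For the final uniform bound on $\sup_{y\in K_f}q_\alpha^-(y)$, the paper appeals to the uniform-on-compacts convergence $q_\alpha\to q$ established in Lemma~\ref{lq}, whereas you invoke the explicit pointwise estimate $|q_\alpha(y)|\le K(y)\,\|f\|\int_0^\infty h(t)\,dt$ from that lemma's proof together with boundedness of $K$ on compacts; both work, and your route yields an explicit constant and is marginally more direct.
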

\begin{proof}
We have $f(x) \ge \mu(f)$ on $K_f^c$, so
\begin{align*}
q_\alpha(x)
&=\ee^x\left\{\int_0^{D_U\wedge D_{K_f}}e^{-\alpha s}(f(X_s)-\mu(f))ds + e^{-\alpha D_U\wedge D_{K_f}} q_\alpha(X_{D_U\wedge D_{K_f}})\right\} \\
&\ge \ee^x\left\{e^{-\alpha D_U\wedge D_{K_f}} q_\alpha(X_{D_U\wedge D_{K_f}})\right\}\geq  -\sup_{y\in U\cup K_f}q_\alpha^-(y),
\end{align*}
where $D_{K_f} = \inf\{t \ge 0:\ X_t \in K_f\}$. By Lemma \ref{lq}, $q_\alpha$ is continuous, and converges to $q$, as $\alpha\to 0$, uniformly on compact sets, hence the last term in the expression above is uniformly bounded in $\alpha \in [0,1]$.
\end{proof}

Since for any stopping time $\tau \in \es$ and $\alpha > 0$
\[
q_\alpha(x)=\ee^x\left\{\int_0^\tau e^{-\alpha s} (f(X_s)-\mu(f))ds + e^{-\alpha \tau}q_\alpha(X_\tau)\right\}
\]
we obtain from \eqref{eq12}
\begin{equation}\label{eq16}
w_\alpha(x)-q_\alpha(x)=\sup_{\tau \in\es} \ee^x\left\{\int_0^{\tau} e^{-\alpha s} \big(\mu(f)-\alpha v_\alpha(z)\big)ds + e^{-\alpha \tau}\big(Mw_\alpha(X_{\tau})-q_\alpha(X_{\tau})\big)\right\}.
\end{equation}
Clearly, $\ve$-optimal stopping times in \eqref{eq12} and \eqref{eq16} coincide. In the following lemma we provide an upper bound on them.
\begin{lemma}\label{stopbound}
Assume \ref{ass:boundedpotential} and that $v:=\limsup_{\alpha \to 0}\alpha v_\alpha(z)>\mu(f)$. Then for any $\delta<v-\mu(f)$ and any  $\alpha\in \Lambda:=\left\{\alpha': \alpha' v_{\alpha'}(z)>\mu(f)+\delta\right\}$ we may restrict ourselves in \eqref{eq12} and \eqref{eq16} to stopping times $\tau$ satisfying the bound
\begin{equation}\label{eq17-}
\ee^x\left\{\frac1{\alpha}(1-e^{-\alpha \tau})\right\}-\frac1{\alpha v_{\alpha}(z)-\mu(f))}\ee^x\left\{e^{-\alpha \tau}\bar{c}(X_\tau)\right\}\leq Z(x),
\end{equation}
where $Z(x) = \sup_{\alpha \in (0,1)} \frac{\kappa + \epsilon+ \|q_{\alpha}^-\| + q_{\alpha}(x)-c(x,z)}{\delta}$ for an arbitrarily small $\ve > 0$. Moreover, $Z(x)$ is bounded on compact sets.
 \end{lemma}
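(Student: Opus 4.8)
The plan is to exploit the reformulated Bellman equation \eqref{eq16}, whose running reward $\mu(f)-\alpha v_\alpha(z)$ is strictly negative on $\Lambda$, and to show that any stopping time violating \eqref{eq17-} is not $\ve$-optimal, hence can be discarded without changing the suprema in \eqref{eq12} and \eqref{eq16} (which, as noted above, have the same $\ve$-optimal stopping times).

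Fix $\ve>0$, $\delta<v-\mu(f)$ and $\alpha\in\Lambda$, so that $\alpha v_\alpha(z)-\mu(f)>\delta>0$. Let $\tau\in\es$ be $\ve$-optimal in \eqref{eq16}. First I would bound the right-hand side of \eqref{eq16} evaluated at $\tau$ from above: since $\int_0^\tau e^{-\alpha s}ds=\frac1\alpha(1-e^{-\alpha\tau})$, the running term equals $-(\alpha v_\alpha(z)-\mu(f))\,\ee^x\{\frac1\alpha(1-e^{-\alpha\tau})\}$; for the terminal term, the elementary inequality $Mw_\alpha(y)\le\bar c(y)+\kappa$ (immediate from the definition of $M$ together with the bound \eqref{eq11}), combined with $-q_\alpha(y)\le\|q_\alpha^-\|$ and $0\le e^{-\alpha\tau}\le1$, gives $e^{-\alpha\tau}\bigl(Mw_\alpha(X_\tau)-q_\alpha(X_\tau)\bigr)\le e^{-\alpha\tau}\bar c(X_\tau)+\kappa+\|q_\alpha^-\|$, where the factor $e^{-\alpha\tau}$ is kept in front of the negative quantity $\bar c(X_\tau)$ and of $q_\alpha(X_\tau)$, and dropped in front of the nonnegative constant $\kappa+\|q_\alpha^-\|$.

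Next I would bound the left-hand side of \eqref{eq16} from below by $c(x,z)-q_\alpha(x)-\ve$, using $w_\alpha(x)\ge c(x,z)$ from Lemma \ref{oszac}. Combining the two bounds, cancelling, and rearranging yields
\[
(\alpha v_\alpha(z)-\mu(f))\,\ee^x\Bigl\{\tfrac1\alpha(1-e^{-\alpha\tau})\Bigr\}-\ee^x\{e^{-\alpha\tau}\bar c(X_\tau)\}\ \le\ \kappa+\ve+\|q_\alpha^-\|+q_\alpha(x)-c(x,z).
\]
Since $c<0$ and $q_\alpha(x)\ge-\|q_\alpha^-\|$, the right-hand side is nonnegative; dividing by $\alpha v_\alpha(z)-\mu(f)$ and then enlarging the fraction by replacing the denominator with the smaller quantity $\delta$ produces exactly \eqref{eq17-} with $Z(x)=\sup_{\alpha\in(0,1)}\frac{\kappa+\ve+\|q_\alpha^-\|+q_\alpha(x)-c(x,z)}{\delta}$. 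As $\tau$ was an arbitrary $\ve$-optimal stopping time and $\ve>0$ is arbitrary, the claimed restriction follows. Boundedness of $Z$ on compact sets is then read off term by term: $\kappa$ and $\ve$ are constants, $\sup_{\alpha\in[0,1]}\|q_\alpha^-\|<\infty$ by \ref{ass:boundedpotential}, $\sup_{\alpha\in[0,1)}q_\alpha(x)\le K(x)\|f\|\int_0^\infty h(t)\,dt$ with $K$ bounded on compacts by Lemma \ref{lq} and \ref{ass:speed_ergodic}, and $x\mapsto-c(x,z)$ is continuous.

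The one genuinely delicate point is the very first step: one cannot run this argument directly on \eqref{eq12}, because there the running reward $f(X_s)-\alpha v_\alpha(z)$ need not be negative; it is precisely the subtraction of the $\alpha$-potential $q_\alpha$ that turns the running reward into $\mu(f)-\alpha v_\alpha(z)<-\delta$ on $\Lambda$, after which the rest is routine sign-bookkeeping together with the a priori estimates \eqref{eq11}, Lemma \ref{oszac}, Lemma \ref{lq} and \ref{ass:boundedpotential}.
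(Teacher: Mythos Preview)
Your proof is correct and follows essentially the same route as the paper: work in the reformulated equation \eqref{eq16}, bound the terminal term via $Mw_\alpha\le\bar c+\kappa$ and $-q_\alpha\le\|q_\alpha^-\|$, bound the left-hand side below using $w_\alpha(x)\ge c(x,z)$ from Lemma~\ref{oszac}, rearrange, and divide through by $\alpha v_\alpha(z)-\mu(f)>\delta$. Your justification of the boundedness of $Z$ on compacts and your closing remark on why one must pass to \eqref{eq16} are slightly more explicit than the paper's, but the argument is the same.
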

\begin{proof}
Lemma \ref{lq} and assumption \ref{ass:boundedpotential} imply that $\sup_{\alpha \in (0,1)} \big(\|q_{\alpha}^-\| + q_{\alpha}(x)-c(x,z)\big)$ is bounded on compact sets and, therefore, so is $Z(x)$. For a given $\ve>0$, every $\ve$-optimal stopping time in \eqref{eq16} satisfies
\begin{equation*}
w_\alpha(x)-q_\alpha(x)-\ve\leq (\mu(f)-\alpha v_\alpha(z))\ee^x\left\{\frac1\alpha(1-e^{-\alpha \tau})\right\}+ \ee^x\left\{e^{-\alpha \tau}\sup_{a\in U}c(X_\tau,a)\right\}+\kappa + \|q_\alpha^-\|.
\end{equation*}
Therefore,
\begin{multline*}
(\alpha v_\alpha(z)-\mu(f))\ee^x\left\{\frac1{\alpha}(1-e^{-\alpha \tau})\right\}-\ee^x\left\{e^{-\alpha \tau}\bar{c}(X_\tau)\right\}\\
\le
\kappa + \ve+ \|q_\alpha^-\| + q_\alpha(x)-c(x,z) \le Z(x)(\alpha v_\alpha(z)-\mu(f)),
\end{multline*}
from which we obtain \eqref{eq17-}. 
\end{proof}

Complementing the above result are the following simple lemmas.
\begin{lemma}\label{lem:expon_tchebyshev}
For any non-negative random variable $\tau$ and $\alpha > 0$
\[
\prob\{\tau > T\} \le \frac{\ee \big\{\frac{1}{\alpha} (1-e^{-\alpha \tau})\big\}}{\frac{1}{\alpha} (1-e^{-\alpha T})}.
\]
\end{lemma}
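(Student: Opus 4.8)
This is a Markov-type inequality, and the plan is to exploit monotonicity of the map $g(t) := \tfrac1\alpha(1-e^{-\alpha t})$ on $[0,\infty)$. First I would observe that for $\alpha>0$ the function $g$ is non-negative and non-decreasing, with $g(0)=0$, so that on the event $\{\tau>T\}$ one has $g(\tau)\ge g(T)=\tfrac1\alpha(1-e^{-\alpha T})$, the latter being strictly positive when $T>0$.

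Next I would bound the numerator from below by restricting the expectation to this event:
\[
\ee\Big\{\tfrac1\alpha(1-e^{-\alpha\tau})\Big\}\ \ge\ \ee\Big\{\tfrac1\alpha(1-e^{-\alpha\tau})\,\ind{\tau>T}\Big\}\ \ge\ \tfrac1\alpha(1-e^{-\alpha T})\,\prob\{\tau>T\},
\]
where the first inequality uses non-negativity of $g(\tau)$ and the second uses the pointwise bound just established. Dividing through by $\tfrac1\alpha(1-e^{-\alpha T})>0$ yields the claimed estimate.

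There is essentially no obstacle here; the only point worth a word of care is the degenerate case $T=0$ (or $\alpha=0$), where the denominator vanishes — but the statement is only used for $T>0$ and $\alpha>0$, so the inequality is meaningful and the argument above applies verbatim.
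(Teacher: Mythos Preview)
Your proof is correct and is essentially identical to the paper's: both use that $t\mapsto \tfrac1\alpha(1-e^{-\alpha t})$ is non-negative and increasing, then bound the expectation below by restricting to $\{\tau>T\}$ and divide. Your remark about the degenerate case $T=0$ is a harmless addition.
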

\begin{proof}
Notice that $t \mapsto \frac{1}{\alpha} (1-e^{-\alpha t})$ is increasing for $\alpha > 0$, hence
\[
\ee \big\{\frac{1}{\alpha} (1-e^{-\alpha \tau})\big\} \ge \prob \{\tau > T\} \frac{1}{\alpha} (1-e^{-\alpha T}).
\]
\end{proof}

\begin{lemma}\label{uniform}
The mapping $x\mapsto Mw_\alpha(x)$ is uniformly in $\alpha \in (0,1)$ equicontinuous on each compact subset of $E$.
\end{lemma}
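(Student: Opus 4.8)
The plan is to reduce the equicontinuity of $x \mapsto Mw_\alpha(x)$ to the uniform continuity of the cost function $c$ on a compact set, observing that the dependence on $\alpha$ drops out completely once one forms the difference of two suprema over $U$.

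First I would fix a compact $\Gamma \subset E$. Since $c$ is continuous on $E \ti U$ and $U$ is compact, $c(x,\cdot)$ is bounded on $U$ for each $x$; together with the $\alpha$-uniform bound $\sup_{\xi \in U}|w_\alpha(\xi)| \le \kappa$ from \eqref{eq11}, this makes $Mw_\alpha(x) = \sup_{\xi \in U}[c(x,\xi) + w_\alpha(\xi)]$ a finite real number for every $x \in E$ and $\alpha \in (0,1)$. Finiteness is what legitimises the elementary estimate $\sup_\xi a(\xi) - \sup_\xi b(\xi) \le \sup_\xi\big(a(\xi) - b(\xi)\big)$; applying it with $a(\xi) = c(x,\xi) + w_\alpha(\xi)$ and $b(\xi) = c(x',\xi) + w_\alpha(\xi)$, the common term $w_\alpha(\xi)$ cancels and one is left with
\[
Mw_\alpha(x) - Mw_\alpha(x') \le \sup_{\xi \in U}\big(c(x,\xi) - c(x',\xi)\big).
\]
Interchanging the roles of $x$ and $x'$ gives $\big|Mw_\alpha(x) - Mw_\alpha(x')\big| \le \sup_{\xi \in U}|c(x,\xi) - c(x',\xi)|$ for all $x, x' \in E$ and all $\alpha \in (0,1)$.

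Next I would invoke the uniform continuity of $c$ on the compact set $\Gamma \ti U$: for $\ve > 0$ there is $\delta > 0$ such that $\rho(x,x') < \delta$, with $x, x' \in \Gamma$, forces $|c(x,\xi) - c(x',\xi)| < \ve$ simultaneously for all $\xi \in U$, hence $\sup_{\xi \in U}|c(x,\xi) - c(x',\xi)| \le \ve$. Since the right-hand side of the displayed bound involves neither $\alpha$ nor anything beyond $c$, this shows that the whole family $\{Mw_\alpha : \alpha \in (0,1)\}$ shares a common modulus of continuity on $\Gamma$, which is precisely the claimed uniform equicontinuity.

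I do not anticipate a genuine obstacle here: the argument is purely the stability of a supremum under a bounded perturbation together with compactness. The only point deserving a line of care is the finiteness of $Mw_\alpha(x)$, needed before the two suprema may be subtracted, and that is already secured by \eqref{eq11} (or, alternatively, by the continuity of $v_\alpha$, hence of $w_\alpha$, on the compact $U$, established in Theorem \ref{disc prob}).
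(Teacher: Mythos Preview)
Your proof is correct and follows exactly the paper's approach: the paper's one-line proof simply records the estimate $|Mw_\alpha(x)-Mw_\alpha(x')|\leq \sup_{\xi \in U}|c(x,\xi)-c(x',\xi)|$, and you have supplied the (straightforward) details behind it and the subsequent appeal to uniform continuity of $c$ on $\Gamma\times U$.
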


\begin{proof}
The assertion is a consequence of the estimate
\(
|Mw_\alpha(x)-Mw_\alpha(x')|\leq \sup_{\xi \in U}|c(x,\xi)-c(x',\xi)|.
\)
\end{proof}

Recalling that $\underline{c}(x)=\inf_{a\in U}c(x,a)$, we assume
\begin{assumption}
\item[(B5)]\label{ass:unifintc_b} For any compact set $\Gamma\subset E$ and a sequence of events $A_T \in \ef_T$, $T > 0$, we have
\[
\lim_{T\to \infty}\sup_{x\in \Gamma}\prob^x\left\{A_T\right\}= 0 \quad\implies\quad \lim_{T\to \infty}\sup_{x\in \Gamma}\ee^x \{\ind{A_T} |\underline{c}(X_T)| \}= 0.
\]
\end{assumption}

In a classical case when $\underline{c}$ is bounded, \ref{ass:unifintc_b} is trivially satisfied.

\begin{theorem}\label{thm:bellman}
Under \ref{ass:weak_feller}-\ref{ass:speed_ergodic}, \ref{ass:cost_function}-\ref{ass:unifintc_b}, if  $\limsup_{\alpha \to 0}\alpha v_\alpha(z)=:\lambda>\mu(f)$ then there exist a continuous function $w$ which is a solution to the following equation
\begin{equation}\label{eq22}
w(x)=\sup_\tau \liminf_{T \to \infty} \ee^x\left\{\int_0^{\tau \wedge T} (f(X_s)-\lambda)ds + Mw(X_{\tau \wedge T})\right\}.
\end{equation}
Moreover $w(z)=0$,
\begin{equation}\label{eq22'}
c(x,z)\leq w(x) \leq  \ee^x\left\{D_U\right\} \|f-\lambda\|+\kappa,
\end{equation}
and
\begin{equation}\label{eq22''}
\underline{c}(x)-\kappa\leq Mw(x)\leq \kappa.
\end{equation}
For any impulse strategy $V=(\tau_i, \xi_i)$, such that $\ee^x\left\{\tau_i\right\}<\infty$ for each $i$, we have that
\begin{equation}\label{eq230}
w(x)\geq \ee^x \Big\{ \int_0^{\tau_n}  \big(f(X_s)-\lambda\big) ds + \sum_{i=1}^n c(X_{\tau_i-}, \xi_i) + w(\xi_n) \Big\},
\end{equation}
where $(X_s)$ denotes the process controlled by the strategy $V$. We have equality in \eqref{eq230} for the strategy $V^*=(\tau_i^*,\xi_i^*)$ defined as follows:
$\tau_1^*=\inf\left\{s\geq 0: w(X_s)=Mw(X_s)\right\}$, $\tau_{n+1}^*=\tau_n^*+\tau_1^* \circ \theta_{\tau_n^*}$, where $\theta_t$ is a Markov shift operator, and $\xi_n^*=\hat{\xi}(X^n_{\tau_n^*})$, where $\hat{\xi}:E \mapsto U$ is a Borel measurable function such that $Mw(y)=c(y,\hat{\xi}(y))+w(\hat{\xi}(y))$ for $y\in E$. Moreover, $x \mapsto \ee^x\{\tau^*_1\}$ is bounded on compact sets.
\end{theorem}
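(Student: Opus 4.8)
The plan is to obtain $w$ as the limit (along a subsequence $\alpha_k \to 0$) of the normalised discounted value functions $w_{\alpha_k}$, and then to verify that this limit satisfies the ergodic Bellman equation \eqref{eq22} together with all the stated bounds and the verification identity \eqref{eq230}.

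\textbf{Step 1: A locally uniform bound and equicontinuity for $w_\alpha$.} Working with $\alpha$ restricted to the set $\Lambda$ from Lemma \ref{stopbound}, I would combine the lower bound $c(x,z) \le w_\alpha(x)$ and the upper bound $w_\alpha(x) \le \ee^x\{D_U\}\,\|f - \alpha v_\alpha(z)\| + \kappa$ from Lemma \ref{boundedness}, noting that $\alpha v_\alpha(z)$ stays in a bounded interval (bounded below by $\mu(f)$ via Lemma \ref{lem:from_above}, bounded above because $|v_\alpha| \le \|f\|/\alpha$), so that $\{w_\alpha\}$ is uniformly bounded on compact sets by Assumption \ref{ass:positrec}. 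For equicontinuity I would use \eqref{eq12}: the difference $w_\alpha(x) - w_\alpha(x')$ is controlled, via the $\varepsilon$-optimal stopping time $\tau$ for $w_\alpha(x')$, by $\sup_\xi |c(x,\xi) - c(x',\xi)|$ plus a term $\|f - \alpha v_\alpha(z)\|\,\ee^{x}\{\tau\} $-type contribution; the crucial input is the a priori bound \eqref{eq17-} on $\ee^x\{\tfrac1\alpha(1-e^{-\alpha\tau})\}$ (hence, via Lemma \ref{lem:expon_tchebyshev}, on the tails of $\tau$) uniformly over $\alpha \in \Lambda$ and over $x,x'$ in a compact set, together with the strong Feller-type coupling from Assumption \ref{ass:speed_ergodic} and weak Feller continuity (Lemma for $P_t$). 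This is where most of the technical work lies. Having uniform boundedness and equicontinuity on compacts, Arzel\`a--Ascoli and a diagonal argument give a subsequence $\alpha_k \to 0$ with $w_{\alpha_k} \to w$ locally uniformly; the bounds \eqref{eq22'} and $w(z)=0$ pass to the limit immediately, and \eqref{eq22''} follows from \eqref{eq11} and \eqref{triangle} exactly as in Lemma \ref{uniform}'s vicinity, since $Mw_{\alpha_k} \to Mw$ locally uniformly (same estimate as Lemma \ref{uniform}).

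\textbf{Step 2: Passing to the limit in the Bellman equation.} I would start from \eqref{eq12} for $w_{\alpha_k}$, i.e.\ $w_{\alpha_k}(x) = \sup_\tau \ee^x\{\int_0^\tau e^{-\alpha_k s}(f(X_s) - \alpha_k v_{\alpha_k}(z))ds + e^{-\alpha_k\tau} Mw_{\alpha_k}(X_\tau)\}$, with the supremum restricted (for $\alpha_k \in \Lambda$) to stopping times obeying \eqref{eq17-}. For the ``$\ge$'' inequality in \eqref{eq22}: fix a stopping time $\tau$ and $T$, use the bounded-$T$ truncation, and let $k \to \infty$; the discount factors $e^{-\alpha_k s} \to 1$, $\alpha_k v_{\alpha_k}(z) \to \lambda$ along the subsequence (by definition of $\limsup$, after possibly refining the subsequence so that the $\limsup$ is attained), and $Mw_{\alpha_k}(X_{\tau\wedge T}) \to Mw(X_{\tau\wedge T})$ with uniform integrability supplied by Assumption \ref{ass:unifintc_b} applied on the event $\{X_{\tau\wedge T} \notin \Gamma\}$ for large compact $\Gamma$ (here \eqref{eq22''} gives the linear-in-$\underline c$ growth of $Mw$). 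For the ``$\le$'' inequality: pick $\varepsilon$-optimal $\tau_{\alpha_k}$ for $w_{\alpha_k}(x)$ satisfying the uniform bound \eqref{eq17-}; this bound plus Lemma \ref{lem:expon_tchebyshev} controls $\prob^x\{\tau_{\alpha_k} > T\}$ uniformly in $k$, so the contribution of $\{\tau_{\alpha_k} > T\}$ is small uniformly, reducing matters to the bounded horizon where local uniform convergence $w_{\alpha_k} \to w$, $Mw_{\alpha_k} \to Mw$ and the uniform-integrability from \ref{ass:unifintc_b}/Lemma \ref{lq} let me pass to the limit and recognise the right-hand side of \eqref{eq22} evaluated at (a subsequential limit, extracted via a Komlós/compactness argument, or more elementarily via the $\liminf_T$ structure of \eqref{eq22}) of $\tau_{\alpha_k}$.

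\textbf{Step 3: The verification inequality and optimality of $V^*$.} With $w$ and \eqref{eq22} in hand, inequality \eqref{eq230} is a supermartingale statement: from \eqref{eq22} applied at $X_{\tau_i}$ (strong Markov property) one gets that $\int_0^t (f(X_s)-\lambda)ds + \sum_{\tau_i \le t} c(X_{\tau_i-},\xi_i) + $ (the appropriate $Mw$/$w$ term evaluated at the current state) is a supermartingale under any admissible $V$ with $\ee^x\{\tau_i\}<\infty$; optional stopping at $\tau_n$, using $Mw(X_{\tau_i-}) \ge c(X_{\tau_i-},\xi_i) + w(\xi_i)$ and $w(\xi_n) $ as the terminal term, gives \eqref{eq230}. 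For the strategy $V^*$: $\tau_1^* = \inf\{s: w(X_s) = Mw(X_s)\}$ is the optimal stopping time for \eqref{eq22} (standard optimal-stopping theory for weak Feller processes, using \ref{ass:weak_feller}, cf.\ \cite{Palczewski2008}), along which the supermartingale above is a martingale up to $\tau_1^*$, and the measurable selector $\hat\xi$ exists and is continuous because $Mw$ is attained and $x \mapsto c(x,\cdot)+w(\cdot)$ is jointly continuous with $U$ compact (a maximum-theorem argument); iterating via the shift $\theta$ gives equality in \eqref{eq230} for $V^*$. Finally $x \mapsto \ee^x\{\tau_1^*\}$ is bounded on compacts because $\tau_1^* \le D_U$ (since on $U$ we have $w \le \kappa$ and, by \eqref{eq22''}, $Mw$ can equal $w$ there, or more robustly because the continuation region is contained in $U^c$ up to the bounded quantity controlled by \eqref{eq22'}), so Assumption \ref{ass:positrec} applies; one should double-check that $\tau_1^* \le D_U$ holds, which follows from comparing \eqref{eq22'} with \eqref{eq22''} on $U$.

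\textbf{Main obstacle.} The hard part is Step 1 --- establishing equicontinuity of $\{w_\alpha\}$ on compacts uniformly in $\alpha$, and the attendant uniform control of the $\varepsilon$-optimal stopping times through \eqref{eq17-} --- together with the uniform-integrability bookkeeping in Step 2 needed to pass limits through the (unbounded) terms $Mw_{\alpha}(X_\tau)$ and $q_\alpha(X_\tau)$ on large-time or large-space events; this is exactly where Assumptions \ref{ass:speed_ergodic}, \ref{ass:unifintc_b} and Lemma \ref{lq} must be deployed carefully, since without uniform ergodicity or bounded cost none of these limits is automatic.
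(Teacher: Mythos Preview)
Your overall architecture (vanishing discount, compactness, pass to the limit, verification) matches the paper, but there is a genuine gap in Step~1 and a false claim in Step~3.

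\textbf{Step 1.} You try to prove equicontinuity of $\{w_\alpha\}$ itself on compacts. Under only the weak Feller assumption this does not go through as you describe: using an $\varepsilon$-optimal $\tau$ for $w_\alpha(x')$ as a test stopping time for $w_\alpha(x)$ requires comparing the laws of the process started from $x$ and from $x'$, i.e.\ some coupling, and \ref{ass:speed_ergodic} does not provide one (it only says each $P_t(x,\cdot)$ is close to $\mu$ for large $t$, not that $P_t(x,\cdot)$ and $P_t(x',\cdot)$ are close uniformly in $t$). The paper sidesteps this entirely by a bootstrap: by Lemma~\ref{uniform} the family $\{Mw_\alpha\}$ is \emph{trivially} equicontinuous, since $|Mw_\alpha(x)-Mw_\alpha(x')|\le \sup_{\xi\in U}|c(x,\xi)-c(x',\xi)|$. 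Arzel\`a--Ascoli then gives $Mw_{\alpha_n}\to v$ locally uniformly for some continuous $v$, and $w$ is \emph{defined} by
\[
w(x):=\sup_\tau \liminf_{T\to\infty}\ee^x\Big\{\int_0^{\tau\wedge T}(f(X_s)-\lambda)\,ds + v(X_{\tau\wedge T})\Big\}.
\]
Only afterwards is $w_{\alpha_n}\to w$ proved, via the three-term finite-horizon decomposition $(w-w^T)+(w^T-w^T_{\alpha_n})+(w^T_{\alpha_n}-w_{\alpha_n})$, using the $q_\alpha$-shifted representation \eqref{eq16} and the tail bound of Lemma~\ref{stopbound} for the outer terms, and local uniform convergence $Mw_{\alpha_n}\to v$ plus \ref{ass:unifintcalpha} for the bounded-horizon middle term. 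This yields $Mw_{\alpha_n}\to Mw$, closing the loop $v=Mw$. That bootstrap is the missing idea in your proposal.

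\textbf{Step 3.} The claim $\tau_1^*\le D_U$ does not follow from \eqref{eq22'}--\eqref{eq22''}: on $U$ those bounds give $c(x,z)\le w(x)\le\kappa$ and $\underline c(x)-\kappa\le Mw(x)\le\kappa$, which do not force $w=Mw$ there. The paper bounds $\ee^x\{\tau_1^*\}$ instead via the a~priori estimate \eqref{eq32}, obtained from the $q$-shifted form \eqref{eq31} of the Bellman equation exactly as in Lemma~\ref{stopbound}. For \eqref{eq230} the paper does not run a supermartingale argument in the limit; it writes the discounted inequality $w_\alpha(x)\ge \ee^x\{\int_0^{\tau_k}e^{-\alpha s}(f-\alpha v_\alpha(z))ds+\sum_{i\le k}e^{-\alpha\tau_i}c(X_{\tau_i-},\xi_i)+e^{-\alpha\tau_k}w_\alpha(\xi_k)\}$ and lets $\alpha_n\to 0$, using $\ee^x\{\tau_k\}<\infty$ and local uniform convergence $w_{\alpha_n}\to w$ on the compact set $U\ni\xi_k$.
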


\begin{proof}
By local compactness of the state space $E$ and Lemma \ref{uniform} there is a continuous function $v$ such that $Mw_{\alpha}(x)\to v(x)$ uniformly on compact sets over a suitable sequence of $\alpha \to 0$. Therefore, we can choose a sequence $\alpha_n \to 0$ such that $\lim_{n\to \infty} \alpha_n v_{\alpha_n}(z)=\lambda$, $\alpha_n v_{\alpha_n}(z) > \mu(f) + \delta$ for some $\delta > 0$, and $Mw_{\alpha_n}(x)\to v(x)$ uniformly on compact sets.
Let
\begin{equation}\label{eq23}
w(x):= \sup_\tau \liminf_{T \to \infty} \ee^x\bigg\{\int_0^{\tau \wedge T} \big(f(X_s)-\lambda\big)ds + v(X_{\tau \wedge T})\bigg\}.
\end{equation}
We are going to show that along a subsequence $w_{\alpha_n}(x)\to w(x)$ uniformly on compact subsets as $n\to  \infty$.
For this purpose we consider finite time approximations. Let
\begin{equation}\label{eq24}
w_{\alpha_n}^T(x)=\sup_\tau \ee^x\bigg\{\int_0^{\tau \wedge T} e^{-{\alpha_n} s} \big(f(X_s)-{\alpha_n} v_{\alpha_n}(z)\big)ds + e^{-{\alpha_n} \tau \wedge T}Mw_{\alpha_n}(X_{\tau \wedge T})\bigg\},
\end{equation}
and
\(
w^T(x) = \sup_\tau \ee^x\Big\{\int_0^{\tau\wedge T}  (f(x_s)-\lambda)ds + v(x_{\tau\wedge T})\Big\}.
\)
Then
\begin{equation}\label{eq24+}
w(x) - w_{\alpha_n}(x) = \big(w(x) - w^T(x)\big) + \big(w^T(x) - w^T_{\alpha_n}(x)\big) + \big(w^T_{\alpha_n}(x) - w_{\alpha_n}(x)\big) = (I) + (II) + (III).
\end{equation}
To address the convergence of the third term of \eqref{eq24+} we write
\begin{equation}\label{eq25}
\begin{aligned}
0
&\le w_{\alpha_n}(x)-w_{\alpha_n}^T(x)\\
&\le \sup_{\tau \in \es} \ee^x\bigg\{\int_0^\tau e^{-\alpha_n s} (\mu(f)-\alpha_n v_{\alpha_n}(z))ds + e^{-\alpha_n \tau}(Mw_{\alpha_n}(X_\tau) -q_{\alpha_n}(X_\tau)) \\
&\hspace{11pt} -\int_0^{\tau \wedge T} e^{-\alpha_n s} (\mu(f)-\alpha_n v_{\alpha_n}(z))ds  -  e^{-\alpha_n \tau \wedge T}(Mw_{\alpha_n}(X_{\tau\wedge T})-q_{\alpha_n}(X_{\tau \wedge T}))\bigg\}\\
&\le \sup_{\tau \in \es} \ee^x\bigg\{\int^\tau_{\tau\wedge T} e^{-\alpha_n s} (\mu(f)-\alpha_n v_{\alpha_n}(z))ds\\
&\hspace{11pt}+ \ind{\tau\geq T}\Big[e^{-\alpha_n \tau}Mw_{\alpha_n}(X_\tau)- e^{-\alpha_n T}Mw_{\alpha_n}(X_T)- e^{-\alpha_n \tau} q_{\alpha_n}(X_\tau)+ e^{-\alpha_n T}q_{\alpha_n}(X_T)\Big]\bigg\}.
\end{aligned}
\end{equation}
Recall from Lemma \ref{stopbound} that in the above we can restrict attention to stopping times which satisfy the bound
\begin{equation}\label{eqn25+}
\ee^x\left\{\frac1{\alpha_n}(1-e^{-\alpha_n \tau})\right\}\leq Z(x)
\end{equation}
for a function $Z(x)$ which is independent from $n$ and bounded on compact sets. 
Note also that for $\alpha>0$ we have
\begin{equation}\label{eq26}
\underline{c}(x)-\kappa\leq Mw_\alpha(x)\leq \kappa.
\end{equation}
Hence,
\begin{equation}\label{eq27}
\begin{aligned}
&\ee^x\bigg\{\int^\tau_{\tau\wedge T} e^{-\alpha_n s}\big(\mu(f)-\alpha_n v_{\alpha_n}(z)\big)ds\\
&\hspace{11pt}+\ind{\tau\geq T}\Big[e^{-\alpha_n \tau}Mw_{\alpha_n}(X_\tau)-e^{-\alpha_n T}Mw_{\alpha_n}(X_T)-e^{-\alpha_n \tau}q_{\alpha_n}(X_\tau)+e^{-\alpha_n T}q_{\alpha_n}(X_T)\Big]\bigg\}\\
&\le
\ee^x\left\{\ind{\tau\geq T}e^{-\alpha_n T}\big(2 \kappa + \|q_{\alpha_n}^-\|-\underline{c}(X_T)+q_{\alpha_n}(X_T)\big)\right\}\\
&\le (2 \kappa +\|q_{\alpha_n}^-\|) \frac{Z(x)}{\frac{1}{\alpha_n} (e^{\alpha_n T}-1)} + \ee^x\left\{\ind{\tau\geq T}e^{-\alpha_nT}\big(-\underline{c}(X_T)+q_{\alpha_n}(X_T)\big)\right\}\\
&\le (2 \kappa +\|q_{\alpha_n}^-\|) \frac{Z(x)}{T} + \ee^x\left\{\ind{\tau\geq T}e^{-\alpha_nT}\big(-\underline{c}(X_T)+q_{\alpha_n}(X_T)\big)\right\},
\end{aligned}
\end{equation}
where we used \eqref{eqn25+} and Lemma \ref{lem:expon_tchebyshev} and finally the fact that $e^{\alpha_nT}-1\geq \alpha_n T$.
Therefore, by assumptions \ref{ass:boundedpotential}-\ref{ass:unifintc_b}, \eqref{eqn:eq0potent_AT} and \eqref{eq27}, for any $\eta > 0$ and any compact set $\Gamma$ there is $T$ such that $w_{\alpha_{n}}(x) - w_{\alpha_{n}}^{T}(x) \le \eta$ for all $x \in \Gamma$ and all $n$.


From \eqref{eq26} we have that
\begin{equation}\label{eq28}
\underline{c}(x)-\kappa\leq v(x)\leq \kappa.
\end{equation}
Notice that 
\begin{equation}\label{eq30}
\begin{aligned}
|\ee^x\left\{Mw_{\alpha_n}(X_{\tau\wedge T})-v(X_{\tau\wedge T})\right\}|
& \leq \left|\ee^x\left\{\ind{\rho(x,X_{\tau\wedge T})\leq R}(Mw_{\alpha_n}(X_{\tau\wedge T})-v(X_{\tau\wedge T}))\right\}\right|\\
&\hspace{11pt} + \ee^x\left\{\ind{\rho(x,X_{\tau\wedge T})\geq R}(2\kappa-\underline{c}(X_{\tau\wedge T}))\right\}=a_{\alpha_n} + b_R.
\end{aligned}
\end{equation}
For a fixed $R$ we have that $\lim_{n \to \infty} a_{\alpha_n} = 0$ for $x$ in compact sets by the definition of $v$ in the beginning of the proof. The term $b_R$ can be made arbitrarily small uniformly in $x$ in compact subsets of $E$, since
\(
b_R \le \ee^x\left\{\ind{\rho(x,X_{\tau\wedge T})\geq R}(2\kappa + L)\right\} + \ee^x \{ \ind{\zeta_T > L} \zeta_T\},
\)
where $\zeta_T$ is defined in \ref{ass:unifintcalpha}. Now letting $R \to \infty$ (using assumption \ref{ass:weak_feller} and \cite[Proposition 2.1]{Palczewski2008}) and then $L \to \infty$ we obtain that $b_R\to  0$. 
Hence, for each fixed $T$ we have uniformly in $x$ in compact subsets of $E$ that
\begin{equation}\label{eq29}
\begin{aligned}
w^T_{\alpha_n}(x) = \sup_\tau \ee^x\bigg\{\int_0^{\tau\wedge T} e^{-\alpha_n s}(f(X_s)-\alpha_n v_{\alpha_n}(z))ds + e^{-\alpha_n {(\tau\wedge T)}} Mw_{\alpha_n}(X_{\tau\wedge T})\bigg\}\\
 \to \sup_\tau \ee^x\bigg\{\int_0^{\tau\wedge T}  (f(X_s)-\lambda)ds + v(X_{\tau\wedge T})\bigg\}=w^T(x),
\end{aligned}
\end{equation}
which provides a uniform on compacts bound on term (II) of \eqref{eq24+}.

Finally, we estimate term (I) of \eqref{eq24+}. From the form \eqref{eq23} of $w(x)$ by Lemma \ref{lq} using \eqref{eq0potent} we obtain
\begin{equation}\label{eq31}
w(x)-q(x)=\sup_{\tau \in \bes} \liminf_{T \to \infty} \ee^x\left\{(\mu(f)-\lambda)(\tau \wedge T)+ v(X_{\tau \wedge T})-q(X_{\tau \wedge T})\right\}.
\end{equation}
Since for each $\ve> 0$ there is a bounded $\ve$-optimal stopping time $\tau$, in analogy to the proof of Lemma \ref{stopbound}, using \eqref{eq28}, we obtain
\[
\underline{c} (x) - \kappa - q(x) - \ve 
 \le v(x) - q(x) - \ve \le w(x) - q(x) - \ve
\le (\mu(f)-\lambda) \ee^x\{\tau\}+ \kappa + \|q^-\|.
\]
Therefore, we may restrict ourselves in \eqref{eq31} as well as in \eqref{eq23} to stopping times satisfying
\begin{equation}\label{eq32}
\ee^x\left\{\tau\right\} \leq \frac{2\kappa + \|q^-\|+q(x)-\underline{c}(x) + 1}{\lambda - \mu(f)}.
\end{equation}
Consequently, similarly to \eqref{eq25} we have
\begin{equation}\label{eq33}
\begin{aligned}
0\le w(x)-w^T(x)
&\le \sup_\tau\left\{\ind{\tau\geq T}\big(v(X_\tau)+\|q^-\|-v(X_T)+q(X_T)\big)\right\}\\
&\le \sup_\tau\left\{\ind{\tau\geq T}\big(2\kappa+\|q^-\|-\underline{c}(X_T)+q(X_T)\big)\right\}.
\end{aligned}
\end{equation}
Since we may restrict ourselves to stopping times $\tau$ satisfying \eqref{eq32}, Tchebyshev inequality, Lemma \ref{lq} and assumptions \ref{ass:boundedpotential}-\ref{ass:unifintc_b} imply that $w^T(x)\to w(x)$ uniformly in $x$ from compact subsets of $E$.

Summarizing now \eqref{eq27}, \eqref{eq29} and \eqref{eq33} we obtain that $w_{\alpha_{n}}(x)\to w(x)$  uniformly in $x$ from compact subsets of $E$.
Consequently, $Mw_{\alpha_{n}}(x)\to Mw(x)$ uniformly in $x$ from compact subsets of $E$. This proves that $v(x)=Mw(x)$ which completes the proof of the first part of Theorem. Notice that \eqref{eq22'} follows directly from \eqref{eq13}, while \eqref{eq22''} follows from \eqref{eq26}.

Take any impulse strategy $V=(\tau_i,\xi_i)$ with integrable impulse times. For any $\alpha>0$, by strong Markov property of $(X_t)$ and using approximations with bounded cost operators $M_L$ as in the proof of Theorem \ref{disc prob} we show
\[
v_\alpha (x) \ge \ee^x \Big\{ \int_0^{\tau_k}e^{-\alpha s} f(X_s) ds + \sum_{i=1}^k e^{-\alpha \tau_i}c(X_{\tau_i-}, \xi_i) + e^{-\alpha \tau_k} v_\alpha(\xi_k) \Big\}.
\]
Subtract $v_\alpha(z)$ from both sides to get
 \begin{equation}\label{eq231}
 w_\alpha(x)\geq \ee^x \Big\{ \int_0^{\tau_k}e^{-\alpha s} \left(f(X_s)-\alpha v_\alpha(z)\right) ds + \sum_{i=1}^k e^{-\alpha \tau_i}c(X_{\tau_i-}, \xi_i) + e^{-\alpha \tau_k} w_\alpha(\xi_k) \Big\}.
 \end{equation}
Since $w_{\alpha_{n}}$ converges uniformly on compact sets to $w$, $\lim_{n \to \infty} \alpha_{n} v_{\alpha_{n}}(z) = \lambda$, and $\ee^x\left\{\tau_k\right\}<\infty$  we obtain \eqref{eq230} from \eqref{eq231}.
By \cite[Theorem 4.8]{Palczewski2016a}\footnote{All assumptions of Theorem 4.8 in \cite{Palczewski2016a} apart from (C3) are trivially satisfied. Assumption (C3) follows from \ref{ass:boundedpotential} and \cite[Remark 4.6]{Palczewski2016a}.} the stopping time $\tau_1^*$ is optimal for the Bellman equation \eqref{eq22}. By \eqref{eq32} we have that $x\mapsto \ee^x\left\{\tau_1^*\right\}$ is bounded on compact sets. Therefore for strategy $V^*$ we have equality in \eqref{eq230}, which completes the proof.
\end{proof}

\begin{prop}\label{propseparated}
Under assumptions of Theorem \ref{thm:bellman} if the cost for impulses is in a separated form $c(x,\xi)=d(x) + e(\xi)$, where $d$ and $e$ are continuous functions, we have
\begin{equation}\label{equiv1}
\sup_{x\in U}\sup_\tau \liminf_{T \to \infty} \ee^x\bigg\{\int_0^{\tau \wedge T} (f(X_s)-\lambda)ds + d(X_{\tau \wedge T}) +e(x)\bigg\}=0,
\end{equation}
and
\begin{equation}\label{equiv2}
\lambda=\sup_{x\in U} \sup_\tau \frac{\ee^x\left\{\int_0^\tau f(X_s)ds + d(X_\tau) +e(x)\right\}} {\ee^x\left\{\tau\right\}}.
\end{equation}
The suprema in \eqref{equiv1} and \eqref{equiv2} are attained for $\hat{x}=argmax_{\xi\in U}\left[w(\xi) + e(\xi)\right]$ and \(\hat{\tau}=\inf\big\{s\geq 0: w(X_s)=Mw(X_s)\big\}\).
Furthermore, the measure
\begin{equation}\label{invmeas}
\eta(A):=\frac{\ee^{\hat{x}}\left\{\int_0^{\hat{\tau}} \ind{A}(X_s)ds \right\}}{\ee^{\hat{x}}\left\{\hat{\tau}\right\}}.
\end{equation}
for $A\in {\cal E}$ is a unique invariant measure for controlled process $(X_s^*)$ using the strategy $V^*=(\tau_i^*,\xi_i^*)$ defined as
$\tau_1^*= \hat \tau$, $\tau_{n+1}^*=\tau_n^*+\hat \tau^* \circ \theta_{\tau_n^*}$, and $\xi_i^* = \hat x$.
\end{prop}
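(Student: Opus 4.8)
The plan is to exploit the product structure $c(x,\xi)=d(x)+e(\xi)$ to turn $M$ into a constant shift. Since $U$ is compact and, by Theorem~\ref{thm:bellman}, $w$ is continuous (and $e$ is continuous), the maximiser $\hat x$ of $\xi\mapsto w(\xi)+e(\xi)$ over $U$ exists and
\[
Mw(y)=d(y)+\max_{\xi\in U}\bigl(e(\xi)+w(\xi)\bigr)=d(y)+e(\hat x)+w(\hat x)\qquad\text{for all }y\in E .
\]
Substituting this into \eqref{eq22} and pulling the constant $e(\hat x)+w(\hat x)$ out of the expectation gives, for every $x$,
\[
\sup_\tau\liminf_{T\to\infty}\ee^x\Bigl\{\int_0^{\tau\wedge T}(f(X_s)-\lambda)\,ds+d(X_{\tau\wedge T})\Bigr\}=w(x)-e(\hat x)-w(\hat x).
\]
Adding $e(x)$ to both sides and taking $\sup_{x\in U}$, the right-hand side becomes $\sup_{x\in U}[w(x)+e(x)]-e(\hat x)-w(\hat x)=0$ by the definition of $\hat x$; this is \eqref{equiv1}. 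The outer supremum is attained at $x=\hat x$, and there the inner supremum over $\tau$ is attained at $\hat\tau=\inf\{s\ge0:\,w(X_s)=Mw(X_s)\}$, because by Theorem~\ref{thm:bellman} the stopping time $\tau_1^*$, which coincides with $\hat\tau$, is optimal for \eqref{eq22}.

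For \eqref{equiv2} I would argue directly from the verification inequality \eqref{eq230}, avoiding any passage to the limit. Fix $x\in U$ and a stopping time $\tau$ with $0<\ee^x\{\tau\}<\infty$; applying \eqref{eq230} with $n=1$ to the single-impulse strategy $(\tau_1,\xi_1)=(\tau,x)$ and cancelling $w(x)$ from both sides yields $0\ge\ee^x\{\int_0^\tau(f(X_s)-\lambda)\,ds+d(X_{\tau-})+e(x)\}$, that is (with the convention on $X_{\tau-}$ versus $X_\tau$ of \eqref{eqn:main_functional}),
\[
\lambda\ \ge\ \frac{\ee^x\{\int_0^\tau f(X_s)\,ds+d(X_\tau)+e(x)\}}{\ee^x\{\tau\}},
\]
so $\lambda$ dominates the supremum in \eqref{equiv2}. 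Equality, together with attainment at $\hat x$ and $\hat\tau$, comes from the equality case of \eqref{eq230} for $V^*$ with $n=1$ and starting point $\hat x$: there $\tau_1^*=\hat\tau$, $\xi_1^*=\hat x$, and $\ee^{\hat x}\{\hat\tau\}<\infty$ since $\hat x\in U$ is compact, so $0=\ee^{\hat x}\{\int_0^{\hat\tau}(f(X_s)-\lambda)\,ds+d(X_{\hat\tau-})+e(\hat x)\}$ and hence $\lambda=\ee^{\hat x}\{\int_0^{\hat\tau}f(X_s)\,ds+d(X_{\hat\tau-})+e(\hat x)\}/\ee^{\hat x}\{\hat\tau\}$. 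Here $\ee^{\hat x}\{\hat\tau\}>0$: since $c<0$ we have $Mw(\hat x)=c(\hat x,\hat x)+w(\hat x)<w(\hat x)$, so $\hat x$ lies outside the closed set $\{w=Mw\}$ and, by continuity of $w-Mw$ and right-continuity of the trajectories, $\hat\tau>0$ $\prob^{\hat x}$-a.s.

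For the last claim, note that under $V^*$ every impulse resets the process to the fixed point $\hat x$ and occurs at a hitting time of the fixed closed set $\{w=Mw\}$; hence the controlled process $(X_s^*)$ is a time-homogeneous (strong) Markov process that regenerates at $\hat x$ at each $\tau_n^*$, the cycles $(X_{\tau_n^*+s}^*)_{0\le s<\tau_{n+1}^*-\tau_n^*}$, $n\ge1$, being i.i.d.\ copies of the uncontrolled path started at $\hat x$ and run until $\hat\tau$, with $\ee^{\hat x}\{\hat\tau\}\in(0,\infty)$. Since $\ee^x\{\hat\tau\}<\infty$ for every $x$ (by \eqref{eq32} and Theorem~\ref{thm:bellman}, using that $q$ and $\underline{c}$ are finite pointwise), the first hitting time of $\{w=Mw\}$ is a.s.\ finite from any starting point, so this regenerative structure governs the Cesàro averages: by the renewal--reward (regenerative ergodic) theorem, for every bounded measurable $g$ and every $x$, $\frac1T\int_0^T g(X_s^*)\,ds\to\int_E g\,d\eta$ $\prob^x$-a.s.\ as $T\to\infty$, with $\eta$ the probability measure \eqref{invmeas} (note $\eta(E)=1$). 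Taking expectations and using bounded convergence, $\frac1T\int_0^T Q_sg(x)\,ds\to\int g\,d\eta$ for all $x$, where $(Q_t)$ is the transition semigroup of $(X_s^*)$; applied to $Q_tg$ in place of $g$ this gives $\int Q_tg\,d\eta=\lim_{T\to\infty}\frac1T\int_t^{T+t}Q_sg(x)\,ds=\int g\,d\eta$, so $\eta$ is invariant. Uniqueness follows since any $(Q_t)$-invariant probability $\nu$ satisfies $\int g\,d\nu=\int\bigl(\frac1T\int_0^T Q_sg(x)\,ds\bigr)d\nu(x)\to\int g\,d\eta$ by bounded convergence, hence $\int g\,d\nu=\int g\,d\eta$ for all bounded continuous $g$ and therefore $\nu=\eta$.

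The bulk of the argument is elementary manipulation of \eqref{eq22} and \eqref{eq230}; the only point needing genuine care is the continuous-time, general-state-space renewal--reward argument behind the convergence of the Cesàro averages, in particular checking that $(X_s^*)$ is a bona fide regenerative Markov process with integrable and non-degenerate cycle length $\hat\tau$. A secondary, purely bookkeeping, issue is the distinction between $X_{\tau-}$ appearing in \eqref{eq230}/\eqref{eqn:main_functional} and $X_\tau$ in \eqref{equiv2}, immaterial for stopping times at which the uncontrolled process is a.s.\ continuous.
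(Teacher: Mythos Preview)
Your argument is correct and follows essentially the same route as the paper for \eqref{equiv1}: exploit the separated form to write $Mw(y)=d(y)+\sup_{\xi\in U}[w(\xi)+e(\xi)]$, substitute into \eqref{eq22}, and identify the attainment point $\hat x$.

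There are two places where you deviate slightly from the paper, both harmless. For \eqref{equiv2}, the paper derives the ratio inequality from \eqref{equiv1} by observing that one may restrict to integrable stopping times and then drop the $\liminf_{T\to\infty}$ (citing \cite[Lemma~4.2]{Palczewski2016a}); you instead invoke \eqref{eq230} with $n=1$ and the strategy $(\tau,x)$. Your route is a bit more direct and avoids that external reference. Your worry about $X_{\tau-}$ versus $X_\tau$ is unfounded: in the paper's convention $X_{\tau_i-}$ denotes the controlled process immediately \emph{before} the $i$-th impulse (not a left limit), which for $i=1$ is exactly $X_{\tau_1}$ of the uncontrolled process; so the two expressions coincide and no continuity assumption on the trajectory at $\tau$ is needed.

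For the invariant measure, the paper defines the controlled transition operator explicitly and verifies invariance of $\eta$ by direct computation \`a la \cite[Theorem~4.1]{Khasminskii2012}, then argues uniqueness from the fact that $(X_s^*)$ visits $\hat x$ infinitely often. Your regenerative/renewal--reward argument is a legitimate alternative that yields both invariance and uniqueness in one stroke, at the price of appealing to a continuous-time renewal theorem rather than a one-line computation. Either approach works here; the paper's is shorter but relies on an external reference, yours is more self-contained.
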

\begin{proof}
Note that $Mw(x)=\sup_{\xi\in U}\left[w(\xi)+e(\xi)\right]+d(x)$. Then \eqref{eq22} has the form
\[
w(x)=\sup_\tau \liminf_{T \to \infty} \ee^x\bigg\{\int_0^{\tau \wedge T} (f(X_s)-\lambda)ds + d(X_{\tau \wedge T}) + \sup_{\xi\in U}\left[w(\xi)+e(\xi)\right]\bigg\}.
\]
When $x\in U$ we have
\[w(x)\geq \sup_\tau \liminf_{T \to \infty} \ee^x\bigg\{\int_0^{\tau \wedge T} (f(X_s)-\lambda)ds + d(X_{\tau \wedge T}) + \left[w(x)+e(x)\right]\bigg\},\] from which
\begin{equation}\label{eqequiv12}
0\geq \sup_\tau \liminf_{T \to \infty} \ee^x\bigg\{\int_0^{\tau \wedge T} (f(X_s)-\lambda)ds + d(X_{\tau \wedge T}) + e(x)\bigg\}
\end{equation}
with equality for $x=\hat{x}$. Recall from Theorem \ref{thm:bellman} that $\ee^x \{ \hat\tau \} < \infty$ for $x \in E$, so in \eqref{eqequiv12} we can consider integrable stopping times only and, therefore, skip the limit (c.f. \cite[Lemma 4.2]{Palczewski2016a}). Hence, for any stopping time $\tau$ and $x\in U$ such that $\ee^x\left\{\tau\right\}<\infty$ we obtain
\[
\lambda\geq  \frac{\ee^x\big\{\int_0^\tau f(X_s)ds + d(X_\tau) +e(x)\big\}}{\ee^x\{\tau\}}
\]
with equality whenever $x=\hat{x}$ and $\tau=\hat{\tau}$.
Finally, under control $V^*=(\tau_i^*,\xi_i^*)$ the controlled process $(X_s^*)$ is Markovian with the transition operator 
\(
\prob^*(x,A):= \allowbreak \sum_{i=0}^\infty \ee^x\big\{\ind{\tau_i^*\leq t < \tau_{i+1}^*} \ind{A}(x_t^{i+1})\big\},
\)
with $\tau_0^*:=0$ and $x_t^i$ as defined in Section \ref{sec:intro} where the construction of controlled process was sketched. By direct calculation, similarly to the formula (4.14) in the proof of \cite[Theorem 4.1]{Khasminskii2012}, one can show that $\eta$ defined in \eqref{invmeas} is in fact an invariant measure for $(X_s^*)$. Since $\ee^x\left\{\tau_1^*\right\}<\infty$ for $x\in E$,  the process $(X_s^*)$ enters $\hat{x}$ infinitely often and therefore $\eta$ is the unique invariant measure.
\end{proof}

The proof of the following lemma follows immediately from \cite[Lemma 4.11]{Palczewski2016a}.
\begin{lemma}\label{lem:super_mart}
Under assumptions of Theorem \ref{thm:bellman} , the process $Z_t := \int_0^t \big(f(X_s)- \lambda\big) ds + w(X_t)$ is a right-continuous $\prob^x$-supermartingale for any $x \in E$. Moreover, for a bounded stopping time $\sigma$ and an arbitrary stopping time $\tau$
\begin{equation}\label{eqn:undisc_bellman_ineq}
\ee^{x} \Big \{ \int_0^\sigma \big(f(X_s)- \lambda\big) ds + Mw(X_\sigma) \Big\}
\le \ee^{x} \Big\{ \int_0^{\tau \wedge \sigma} \big(f(X_s)- \lambda\big) ds + \ind{\sigma < \tau} Mw(X_{\sigma}) + \ind{\sigma \ge \tau} w(X_{\tau}) \Big\}.
\end{equation}
\end{lemma}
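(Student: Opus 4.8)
The plan is to read $w$, via the Bellman equation \eqref{eq22}, as the value function of an optimal stopping problem with running payoff $f-\lambda$ and terminal payoff $Mw$, and then to invoke \cite[Lemma 4.11]{Palczewski2016a}, which delivers exactly the two assertions sought. First I would record that $Z_t$ is genuinely $\prob^x$-integrable for every $t$: the bounded term $\int_0^t(f-\lambda)\,ds$ is harmless, and for $w(X_t)$ the two-sided estimate \eqref{eq22'} gives $c(\cdot,z)\le w\le \ee^\cdot\{D_U\}\,\|f-\lambda\|+\kappa$, whose lower bound has integrable absolute value because $|c(X_t,z)|\le|\underline c(X_t)|$ together with \ref{ass:unifintcalpha}/\ref{ass:unifintc_b}, while the upper bound is controlled through the representation \eqref{eq13}, i.e.\ through $q_\alpha$, using \ref{ass:boundedpotential} and Lemma \ref{lq}. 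Right-continuity of $Z_t$ is then immediate from continuity of $w$ (Theorem \ref{thm:bellman}) and the c\`adl\`ag paths of $(X_t)$.

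Next I would verify the hypotheses of \cite[Lemma 4.11]{Palczewski2016a}: the weak Feller property is \ref{ass:weak_feller}; continuity of $w$ and of $Mw$ and the Bellman identity \eqref{eq22} are part of Theorem \ref{thm:bellman}; the one-sided growth bounds \eqref{eq22'}--\eqref{eq22''} supply the control needed for the terminal payoff $Mw$ (bounded above by $\kappa$, bounded below by $\underline c-\kappa$); and the structural conditions of \cite{Palczewski2016a} are satisfied here as already noted in the proof of Theorem \ref{thm:bellman} — (C1)--(C2) trivially and (C3) from \ref{ass:boundedpotential} together with \cite[Remark 4.6]{Palczewski2016a}. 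Granting these, \cite[Lemma 4.11]{Palczewski2016a} yields at once the right-continuous $\prob^x$-supermartingale property of $Z_t$ and inequality \eqref{eqn:undisc_bellman_ineq}, the translation of notation being that the reference's terminal reward corresponds to $Mw$ and its value function to $w$.

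If a self-contained derivation of \eqref{eqn:undisc_bellman_ineq} is preferred, I would argue directly: since $\{\sigma\ge\tau\}\in\ef_\tau$, condition on $\ef_\tau$. On $\{\sigma<\tau\}$ the two sides of \eqref{eqn:undisc_bellman_ineq} coincide identically; on $\{\sigma\ge\tau\}$ the strong Markov property at $\tau$ reduces the claim to $w(y)\ge\ee^y\{\int_0^{\sigma'}(f-\lambda)\,ds+Mw(X_{\sigma'})\}$ for the (shifted, bounded) stopping time $\sigma'=\sigma-\tau$, and this is just \eqref{eq22} with the $\liminf_T$ dropped, the drop being justified by dominated convergence using $Mw\le\kappa$ from above and the integrable lower bound from \ref{ass:unifintcalpha}; the case $\tau=\infty$ is vacuous because $\sigma$ is bounded. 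The supermartingale property then follows in the usual way by concatenation of policies: $w(x)\ge\ee^x\{\int_0^\sigma(f-\lambda)\,ds+w(X_\sigma)\}$ for bounded $\sigma$, since following an optimal policy after $\sigma$ is admissible at $x$.

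The step I expect to be the real obstacle is not the supermartingale structure itself — that is the standard Snell-envelope phenomenon — but matching the integrability hypotheses of \cite[Lemma 4.11]{Palczewski2016a} in the present setting, where both $w$ and $Mw$ are unbounded from below (through $\underline c$) and $w$ is unbounded from above (through $\ee^\cdot\{D_U\}$). This is precisely the place where assumptions \ref{ass:unifintcalpha}, \ref{ass:unifintc_b} and \ref{ass:boundedpotential}, via the potential estimates of Lemma \ref{lq} and the bound \eqref{eq13}, must be brought to bear, and care is needed to ensure that all the conditional expectations appearing in the argument are finite and that the optional sampling / dominated convergence steps are legitimate.
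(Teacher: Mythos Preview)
Your proposal is correct and follows exactly the paper's approach: the paper's entire proof consists of the single sentence that the lemma ``follows immediately from \cite[Lemma 4.11]{Palczewski2016a}.'' Your additional verification of the hypotheses and the alternative self-contained argument go beyond what the paper records, but the core route is identical.
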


\begin{lemma}
Under assumptions of Theorem \ref{thm:bellman}
\begin{equation}\label{wbounds}
\begin{aligned}
&\max\left\{c(x,z),\ee^x\left\{D_U\right\} (-\|f\|-\lambda) - \kappa -\|\underline{c}\|_U \right\}\\
&\le w(x) \le \min\Big\{q(x) + \kappa + \sup_{\alpha \in (0,1)} \|q_\alpha^-\|,  \ee^x\left\{D_U\right\} \|f-\lambda\|+\kappa\Big\}
\end{aligned}
\end{equation}
for $x\in E$, where $\|\underline{c}\|_U=\sup_{y\in U}|\underline{c}(y)|$. If, additionally, $f(x)-\lambda \leq 0$ for $x$ outside of some compact set $K$, then $w$ is bounded from above.
\end{lemma}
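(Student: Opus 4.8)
\emph{Proof proposal.} The plan is to establish the five assertions one at a time, in order of increasing difficulty. The bounds $c(x,z)\le w(x)$ and $w(x)\le\ee^x\{D_U\}\|f-\lambda\|+\kappa$ are precisely \eqref{eq22'}, so nothing new is needed there. For $w(x)\le q(x)+\kappa+\sup_{\alpha\in(0,1)}\|q_\alpha^-\|$ I would subtract the potential identity \eqref{eq0potent} (applied with the bounded stopping time $\tau\wedge T$) from the Bellman equation \eqref{eq22}; this yields, with $v=Mw$, exactly the identity \eqref{eq31}, namely $w(x)-q(x)=\sup_\tau\liminf_{T\to\infty}\ee^x\{(\mu(f)-\lambda)(\tau\wedge T)+Mw(X_{\tau\wedge T})-q(X_{\tau\wedge T})\}$. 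Since $\mu(f)<\lambda$ (a hypothesis of Theorem \ref{thm:bellman}), $Mw\le\kappa$ by \eqref{eq22''}, and $-q(y)\le\|q^-\|\le\sup_{\alpha\in(0,1)}\|q_\alpha^-\|$ for every $y$ (the last comparison because $q_\alpha\to q$ pointwise, Lemma \ref{lq}), the expression in braces is bounded above deterministically by $\kappa+\sup_{\alpha\in(0,1)}\|q_\alpha^-\|$, and the bound follows.

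For the outstanding lower bound I would take $\tau=D_U$ in \eqref{eq22}. If $\ee^x\{D_U\}=\infty$ the asserted lower bound equals $-\infty$ (note $\|f\|+\lambda>0$, since $\lambda>\mu(f)\ge-\|f\|$) and there is nothing to prove; otherwise $D_U<\infty$ $\prob^x$-a.s. Pathwise, $\int_0^{D_U\wedge T}(f-\lambda)\,ds\ge-(\|f\|+\lambda)D_U$, and $X_{D_U}\in U$ together with \eqref{eq22''} gives $Mw(X_{D_U})\ge\underline{c}(X_{D_U})-\kappa\ge-\|\underline{c}\|_U-\kappa$. To pass to the limit $T\to\infty$ I would split on $\{D_U\le T\}$, where dominated convergence applies ($Mw$ is bounded on $U$), and on $\{D_U>T\}$, where $\ee^x\{\ind{D_U>T}|Mw(X_T)|\}\le\ee^x\{\ind{D_U>T}(|\underline{c}(X_T)|+\kappa)\}\to0$ by \ref{ass:unifintc_b}, since $\{D_U>T\}\in\ef_T$ and $\prob^x\{D_U>T\}\to0$. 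Hence $w(x)\ge\ee^x\{\int_0^{D_U}(f-\lambda)\,ds+Mw(X_{D_U})\}\ge\ee^x\{D_U\}(-\|f\|-\lambda)-\kappa-\|\underline{c}\|_U$.

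For the boundedness claim, assume $\{f>\lambda\}\subseteq K$ with $K$ compact, and set $C:=\max\{\kappa,\sup_{y\in K}w(y)\}$, finite because $w$ is continuous (Theorem \ref{thm:bellman}). For $x\in K$, $w(x)\le C$ trivially. For $x\notin K$ I would derive $w(x)\le C$ from \eqref{eq22} by splitting each competing stopping time $\tau$ at $D_K:=\inf\{s\ge0:X_s\in K\}$, allowing $D_K=\infty$. On $\{\tau\wedge T<D_K\}$ the process stays outside $K$ on $[0,\tau\wedge T)$, so $f-\lambda\le0$ there and, since $Mw\le\kappa$, pathwise $\int_0^{\tau\wedge T}(f-\lambda)\,ds+Mw(X_{\tau\wedge T})\le\kappa$. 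On $\{D_K\le\tau\wedge T\}$ the integral over $[0,D_K)$ is $\le0$, and by the strong Markov property at $D_K$, together with the elementary inequality $w(y)\ge\ee^y\{\int_0^\rho(f-\lambda)\,ds+Mw(X_\rho)\}$ valid for bounded stopping times $\rho$ (immediate from \eqref{eq22}), the $\ef_{D_K}$-conditional expectation of $\int_{D_K}^{\tau\wedge T}(f-\lambda)\,ds+Mw(X_{\tau\wedge T})$ is at most $w(X_{D_K})\le\sup_K w$ (here $X_{D_K}\in K$ on $\{D_K<\infty\}$). Adding, $\ee^x\{\int_0^{\tau\wedge T}(f-\lambda)\,ds+Mw(X_{\tau\wedge T})\}\le\kappa\,\prob^x\{\tau\wedge T<D_K\}+(\sup_K w)\,\prob^x\{D_K\le\tau\wedge T\}\le C$ for all $\tau$ and $T$, so $w(x)=\sup_\tau\liminf_T(\cdots)\le C$.

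I expect this last splitting at $D_K$ to be the main obstacle: one has to check that $\tau\wedge T-D_K$ is a stopping time for the time-shifted filtration on $\{D_K\le\tau\wedge T\}$, that $X_{D_K}\in K$ on $\{D_K<\infty\}$ (valid for closed $K$ and right-continuous paths), and --- so that the decomposition of expectations is legitimate --- that the terms involved are integrable; for the latter, \ref{ass:unifintcalpha} applied with the compact set $K$ on which $X_{D_K}$ takes values controls $\ee^{X_{D_K}}\{\sup_{t\le T}|\underline{c}(X_t)|\}$. None of this is deep, but the bookkeeping around the truncations $\tau\wedge T$ and the possibly infinite $D_K$ needs care.
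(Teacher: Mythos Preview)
Your proof is correct in all its parts and largely parallels the paper, with two notable deviations.

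For the upper bound $w(x)\le q(x)+\kappa+\sup_{\alpha\in(0,1)}\|q_\alpha^-\|$, you work directly with the undiscounted identity \eqref{eq31} and bound each term pathwise using $\mu(f)<\lambda$, $Mw\le\kappa$, and $-q\le\sup_\alpha\|q_\alpha^-\|$. The paper instead stays at the discounted level: it bounds $w_\alpha-q_\alpha$ via \eqref{eq16} and \eqref{eq22''}, obtaining
\[
w_\alpha(x)-q_\alpha(x)\le\sup_\tau \ee^x\Big\{\int_0^\tau e^{-\alpha s}\big(\mu(f)-\alpha v_\alpha(z)\big)ds + e^{-\alpha\tau}\big(\kappa+\sup_{\alpha'\in(0,1)}\|q_{\alpha'}^-\|\big)\Big\},
\]
and then passes to the limit along the sequence $\alpha_n$ from Theorem~\ref{thm:bellman} (where $\alpha_n v_{\alpha_n}(z)>\mu(f)$, so the integrand is nonpositive). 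Your route is shorter and avoids having to track the specific subsequence.

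For the final boundedness claim, the paper does \emph{not} go through the strong Markov property. It invokes Lemma~\ref{lem:super_mart} (the supermartingale property of $Z_t=\int_0^t(f-\lambda)\,ds+w(X_t)$) to obtain directly, for bounded competitors $\tau$ and $\Gamma$ a compact set containing $K\cup U$,
\[
w(x)\le\sup_{\tau}\ee^x\Big\{\int_0^{\tau\wedge D_\Gamma}(f(X_s)-\lambda)\,ds+\ind{\tau<D_\Gamma}Mw(X_\tau)+\ind{D_\Gamma\le\tau}w(X_{D_\Gamma})\Big\},
\]
and then bounds this by $\kappa\vee\|w\|_\Gamma$. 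This is precisely the clean resolution of the obstacle you flag: the conditional inequality you want on $\{D_K\le\tau\wedge T\}$ follows from optional stopping for the supermartingale $Z_t$ at the bounded times $D_K\wedge\tau\wedge T\le\tau\wedge T$ (together with $Mw\le w$), so there is no need to realise $\tau\wedge T-D_K$ as a stopping time for the shifted filtration. Your argument becomes rigorous once you replace the strong-Markov justification by this supermartingale step; Lemma~\ref{lem:super_mart} is exactly the tool.
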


\begin{proof}
In view of \eqref{eq22'},  to prove \eqref{wbounds} it remains to show
\begin{equation}\label{eq33'}
\ee^x\left\{D_U\right\} (-\|f\|-\lambda) - \kappa -\|\underline{c}\|_U \le w(x) \le q(x) + \kappa + \sup_{\alpha \in (0,1)} \|q_\alpha^-\|.
 \end{equation}
From \eqref{eq22} we have that
\(
w(x)\geq \ee^x\left\{\int_0^{D_U}(f(X_s)-\lambda)ds + Mw(X_{D_u})\right\}
\)
and, therefore, by \eqref{eq22''} we obtain the first inequality in \eqref{eq33'}.
Combining \eqref{eq16} with \eqref{eq22''} yields
\begin{equation}\label{eq:34}
w_\alpha(x)-q_\alpha(x)\leq\sup_\tau \ee^x\bigg\{\int_0^\tau e^{-\alpha s} (\mu(f)-\alpha v_\alpha(z))ds + e^{-\alpha \tau}(\kappa+ \sup_{\alpha \in (0,1)} \|q_\alpha^-\|)\bigg\}.
\end{equation}
Take a sequence $\alpha_n\to 0$ such that $w_{\alpha_n}(x) \to w (x)$. By Lemma \ref{lq} $q_{\alpha_n}(x)\to q(x)$. Hence \eqref{eq:34} implies $w(x)\leq q(x)+\kappa+\sup_{\alpha \in (0,1)} \|q_\alpha^-\|)$, which completes the proof of the second inequality in \eqref{eq33'}.

Let now $\Gamma$ be a compact set that  contains the sets $U$ and $K$. Since the supremum in the definition of $w$ can be taken over bounded stopping times, from Lemma \ref{lem:super_mart} we get
\begin{equation}\label{eq:35}
w(x)\le\sup_{\tau \text{-bounded}} \ee^x\bigg\{\int_0^{\tau \wedge D_\Gamma}  (f(X_s)-\lambda)ds + \ind{\tau<D_\Gamma} Mw(X_\tau)+ \ind{D_\Gamma\leq \tau}w(X_{D_\Gamma})\bigg\}.
\end{equation}
Using \eqref{eq22''} and observing that the integrand is negative outside of $\Gamma$, we obtain
\[
w(x)\leq \sup_\tau \ee^x\left\{\ind{\tau<D_\Gamma} \kappa + \ind{D_\Gamma\leq \tau}\|w\|_\Gamma \right\},
\]
where $\|w\|_\Gamma=\sup_{y\in \Gamma} |w(y)|<\infty$ by the continuity of $w$. Consequently $w(x)\leq \kappa \vee \|w\|_\Gamma$, which completes the proof.
\end{proof}

To infer from the solution of the Bellman equation \eqref{eq22} that $\lambda$ is the optimal value, we will need the following Tauberian theorem.
\begin{lemma}\label{lem:tauber} For a bounded function $f$ and sequences of random variables $Y_i \le 0$, $\tau_i \ge 0$ with $(\tau_i)$ being an increasing sequence we have
\begin{equation}\label{eq:37}
\begin{aligned}
&\liminf_{T \to \infty} \frac{1}{T} \bigg(\int_0^T f(s) ds+ \ee\Big\{\sum_{i=1}^\infty \ind{\tau_i \le T} Y_i \Big\}\bigg)\\ 
&\leq
\liminf_{\alpha \to 0} \alpha \bigg( \int_0^\infty e^{-\alpha s} f(s) ds + \ee \Big\{ \sum_{i=1}^\infty \ind{\tau_i < \infty} e^{-\alpha \tau_i} Y_i \Big\}\bigg).
\end{aligned}
\end{equation}
\end{lemma}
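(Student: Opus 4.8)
The plan is to collapse \eqref{eq:37} to the classical one--variable Tauberian inequality via an Abelian identity. First I would introduce $F(T)=\int_0^T f(s)\,ds$, the non-increasing, non-positive process $N(T)=\sum_{i=1}^\infty\ind{\tau_i\le T}Y_i$, and $\Phi(T)=F(T)+\ee\{N(T)\}\in[-\infty,\infty)$. Two elementary properties will be used: $\Phi(T)\le\|f\|T$, and $\Phi(t)-\Phi(s)\le\|f\|(t-s)$ for $0\le s\le t$ (because $F$ is $\|f\|$--Lipschitz and $\ee\{N(t)-N(s)\}\le 0$). The degenerate case is easy to dispose of: if $\ee\{N(T_*)\}=-\infty$ for some finite $T_*$, then $\Phi\equiv-\infty$ on $[T_*,\infty)$, the left-hand side of \eqref{eq:37} equals $-\infty$, and there is nothing to prove; so I may assume $\Phi$ is finite everywhere. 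Note that the left-hand side of \eqref{eq:37} is, by definition, precisely $\liminf_{T\to\infty}\Phi(T)/T$.

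Second, I would establish the Abelian identity
\[
\alpha\Big(\int_0^\infty e^{-\alpha s}f(s)\,ds+\ee\Big\{\sum_{i=1}^\infty\ind{\tau_i<\infty}e^{-\alpha\tau_i}Y_i\Big\}\Big)=\alpha^2\int_0^\infty e^{-\alpha t}\Phi(t)\,dt .
\]
Writing $e^{-\alpha\tau_i}=\alpha\int_{\tau_i}^\infty e^{-\alpha s}\,ds$ and using Tonelli's theorem (all the terms $\ind{\tau_i\le s}Y_i$ have the same sign, so sum, expectation and $ds$--integration interchange freely) gives $\ee\{\sum_i\ind{\tau_i<\infty}e^{-\alpha\tau_i}Y_i\}=\alpha\int_0^\infty e^{-\alpha s}\ee\{N(s)\}\,ds$; and Fubini's theorem, legitimate because $f$ is bounded and $\int_0^\infty t e^{-\alpha t}\,dt<\infty$, gives $\int_0^\infty e^{-\alpha s}f(s)\,ds=\alpha\int_0^\infty e^{-\alpha t}F(t)\,dt$. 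Adding the two yields the identity, so \eqref{eq:37} becomes the scalar statement $\liminf_{T\to\infty}\Phi(T)/T\le\liminf_{\alpha\to0}\alpha^2\int_0^\infty e^{-\alpha t}\Phi(t)\,dt$.

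For the scalar Tauberian step I would set $\ell:=\liminf_{T\to\infty}\Phi(T)/T$, which satisfies $\ell\le\|f\|<\infty$. If $\ell=-\infty$ there is nothing to prove, so fix any $\ell'<\ell$ and pick $T_0$ with $\Phi(T)\ge\ell'T$ for all $T\ge T_0$. The Lipschitz-type bound gives $\Phi(t)\ge\Phi(T_0)-\|f\|(T_0-t)$ for $t<T_0$, and combining this with the previous line produces a global affine minorant $\Phi(t)\ge\ell't-C$ valid for every $t\ge0$, with a constant $C\ge0$ depending only on $\ell',T_0,\|f\|$. Then, splitting
\[
\alpha^2\!\int_0^\infty\! e^{-\alpha t}\Phi(t)\,dt=\alpha^2\!\int_0^\infty\! e^{-\alpha t}\big(\Phi(t)-\ell't+C\big)\,dt+\alpha^2\!\int_0^\infty\! e^{-\alpha t}\big(\ell't-C\big)\,dt\ \ge\ \ell'-C\alpha ,
\]
since the first integrand is non-negative and $\alpha^2\int_0^\infty e^{-\alpha t}(\ell't-C)\,dt=\ell'-C\alpha$. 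Letting $\alpha\to0$ gives $\liminf_{\alpha\to0}\alpha^2\int_0^\infty e^{-\alpha t}\Phi(t)\,dt\ge\ell'$, and then $\ell'\uparrow\ell$ concludes.

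The argument is short once the Abelian identity is in hand; the only points that need care are the sign/integrability bookkeeping behind the Fubini--Tonelli interchanges and the trivial-but-necessary treatment of the case $\ee\{N(T)\}=-\infty$. In particular no external Tauberian theorem need be quoted: the one-sided Lipschitz property of $\Phi$ --- which is precisely where the hypotheses $Y_i\le0$ and boundedness of $f$ enter --- suffices to run the elementary splitting above.
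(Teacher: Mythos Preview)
Your argument is correct and follows essentially the same route as the paper: both proofs first establish the Abelian identity $\alpha\big(\int_0^\infty e^{-\alpha s}f(s)\,ds+\ee\{\sum_i \ind{\tau_i<\infty}e^{-\alpha\tau_i}Y_i\}\big)=\alpha^2\int_0^\infty e^{-\alpha t}\Phi(t)\,dt$ (the paper via the same substitution $e^{-\alpha t}=\alpha\int_t^\infty e^{-\alpha u}\,du$, with Fubini justified by truncation plus monotone convergence), and then bound $\Phi$ from below by an affine function of $t$ to pass to the limit $\alpha\to 0$. The only cosmetic difference is in the handling of small $t$: the paper splits the integral at a cutoff $M$ and uses a crude constant lower bound for $\Phi$ on $[0,M]$ (contributing a term of order $\alpha^2$), whereas you use the one-sided Lipschitz property of $\Phi$ to manufacture a global minorant $\Phi(t)\ge \ell' t - C$; in both cases the small-$t$ contribution vanishes as $\alpha\to 0$.
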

\begin{proof}
Let $a = \liminf_{T \to \infty} \frac{1}{T} \left(\int_0^T f(s) ds+ \ee\Big\{\sum_{i=1}^\infty \ind{\tau_i \le T} Y_i \Big\}\right)$. If $a = -\infty$ then the inequality is obvious. Otherwise, for every $\ve > 0$ there is $M > 0$ such that
\[
a-\ve \leq  \frac{1}{T} \Big(\int_0^T f(s) ds + \ee\Big\{\sum_{i=1}^\infty \ind{\tau_i \le T} Y_i \Big\}\Big)
\]
for all $T \ge M$. Using the representation $e^{-\alpha t} = \int_t^\infty \alpha e^{-\alpha u} du$ we write
\begin{multline}\label{Taubereq1}
\alpha\bigg( \int_0^\infty e^{-\alpha s} f(s) ds + \ee \Big\{ \sum_{i=1}^\infty  \ind{\tau_i < \infty} e^{-\alpha \tau_i} Y_i \Big\}\bigg) \\
 = \int_0^\infty f(s) \int_s^\infty \alpha^2 e^{-\alpha u} du ds + \ee \Big\{ \sum_{i=1}^\infty Y_i \int_0^\infty \alpha^2 e^{-\alpha u} \ind{\tau_i\leq u}du \Big\}.
\end{multline}
For any $L>0$ and any positive integer $n$ we can apply Fubini's theorem:
\[
\ee\Big\{ \sum_{i=1}^n \big(Y_i\vee (-L)\big) \int_0^\infty \alpha^2 e^{-\alpha u} \ind{\tau_i\leq u}du \Big\}=
\int_0^\infty \alpha^2 e^{-\alpha u} \ee\Big\{\sum_{i=1}^n \ind{\tau_i\leq u} \big(Y_i\vee (-L)\big)\Big\}du.
\]
Letting $L\to \infty $ and $n\to \infty$, monotone convergence theorem yields
\[
\ee\Big\{ \sum_{i=1}^\infty Y_i \int_0^\infty \alpha^2 e^{-\alpha u} \ind{\tau_i\leq u}du \Big\}=
\int_0^\infty \alpha^2 e^{-\alpha u} \ee\Big\{\sum_{i=1}^\infty\ind{\tau_i\leq u} Y_i\Big\}du.
\]
Therefore from \eqref{Taubereq1} we obtain
\begin{align*}
&\alpha\bigg( \int_0^\infty e^{-\alpha s} f(s) ds + \ee \Big\{ \sum_{i=1}^\infty \ind{\tau_i < \infty} e^{-\alpha \tau_i} Y_i \Big\}\bigg)\\
&= \int_0^\infty \alpha^2 e^{-\alpha u} \bigg(\int_0^u f(s)ds + \ee \Big\{ \sum_{i=1}^\infty  Y_i \ind{\tau_i\leq u} \Big\}\bigg)du\\
&\ge \alpha^2 M \Big(\|f\| + \ee \Big\{ \sum_{i=1}^\infty  Y_i \ind{\tau_i\leq M} \Big\}\Big) +
(a-\ve)\int_M^\infty \alpha^2 e^{-\alpha u} u\, du.
\end{align*}
Letting $\alpha \to 0$, the right-hand side converges to $a-\ve$, since the finiteness of $a$ implies that \[ \ee \big\{ \sum_{i=1}^\infty \ind{\tau_i \le M}  Y_i \big\} > - \infty \]
for all $M > 0$. This completes the proof since $\ve$ can be taken arbitrarily small.
\end{proof}

Recall that a strategy $V=(\tau_i, \xi_i)$ is \emph{admissible} for $x \in E$ if stopping times $\tau_i$ increase to infinity $\prob^x$-a.s. If, further,  $\ee^x \{ \tau_i \} < \infty$ for all $i$, we call the strategy \emph{integrable}. The aim of the paper is to maximise two types of functionals: the functional $J(x, V)$ defined in \eqref{eqn:main_functional} over admissible strategies $V$ and the functional $\hat J(x, V)$ defined in \eqref{eqn:weaker_functional} over admissible integrable strategies $V$. The following theorem links the solution to the auxiliary Bellman equation \eqref{eq22} with the optimal value of the above functionals.

\begin{theorem}\label{optimalityoflambda}
Make the same assumptions as in Theorem \ref{thm:bellman}. Denote by $(\tau^*_i, \xi^*_i)$ the optimal strategy from Theorem \ref{thm:bellman}.
\begin{enumerate}
\item $\lambda = \sup_V \hat{J}\big(x, (\tau_i, \xi_i)\big)$ with the supremum over all integrable strategies $V=(\tau_i, \xi_i)$. The strategy $(\tau^*_i, \xi^*_i)$ realizes the supremum: $\lambda = \hat{J}\big(x, (\tau^*_i, \xi^*_i)\big)$.
\item$\lambda \ge J\big(x, (\tau_i, \xi_i)\big)$ for any admissible strategy $(\tau_i, \xi_i)$.
\item The strategy $(\tau^*_i, \xi^*_i)$ is optimal for the functional $J$, that is $\lambda = J\big(x, (\tau^*_i, \xi^*_i)\big)$, when $c(x,\xi)=e(\xi)$ (a separated cost with $d\equiv 0$), or when $w$ is bounded from above.
\end{enumerate}
\end{theorem}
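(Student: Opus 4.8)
The plan is to prove the three assertions in order: (1) and (2) come from rearranging, respectively, inequality \eqref{eq230} of Theorem~\ref{thm:bellman} and the Tauberian estimate of Lemma~\ref{lem:tauber}, while (3) requires converting the identity that holds exactly at the impulse epochs $\tau^*_n$ into a statement about the continuous-time Cesàro average defining $J$. \emph{For (1):} I first note that $V^*=(\tau^*_i,\xi^*_i)$ is integrable, since $x\mapsto\ee^x\{\tau^*_1\}$ is bounded on compacts and each $\xi^*_i$ lies in the compact set $U$, so by the strong Markov property $\ee^x\{\tau^*_{n+1}\}\le\ee^x\{\tau^*_n\}+\sup_{y\in U}\ee^y\{\tau^*_1\}$, giving $\ee^x\{\tau^*_n\}<\infty$ for all $n$. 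For an arbitrary integrable $V=(\tau_i,\xi_i)$, dividing \eqref{eq230} by $\ee^x\{\tau_n\}$ and using $w(\xi_n)\ge-\kappa$ (from \eqref{eq11}) yields $\frac{1}{\ee^x\{\tau_n\}}\,\ee^x\{\int_0^{\tau_n}f(X_s)\,ds+\sum_{i=1}^n c(X_{\tau_i-},\xi_i)\}\le\lambda+\frac{w(x)+\kappa}{\ee^x\{\tau_n\}}$; admissibility forces $\tau_n\to\infty$ $\prob^x$-a.s., so $\ee^x\{\tau_n\}\to\infty$ by Fatou, whence $\hat J(x,V)\le\lambda$. For $V^*$ the inequality \eqref{eq230} is an equality, so the same computation together with $|w(\xi^*_n)|\le\kappa$ shows the sequence under the $\liminf$ in $\hat J$ converges to $\lambda$; hence $\hat J(x,V^*)=\lambda$ and the supremum is attained.

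\emph{For (2):} by construction $v_\alpha$ dominates the discounted payoff of every admissible strategy, so $\alpha J_\alpha(x,V)\le\alpha v_\alpha(x)$ for all $\alpha>0$. Apply Lemma~\ref{lem:tauber} with the bounded function $s\mapsto\ee^x\{f(X_s)\}$ and $Y_i=c(X_{\tau_i-},\xi_i)\le c<0$: after Fubini the left side of \eqref{eq:37} is $J(x,V)$ and the right side is $\liminf_{\alpha\to0}\alpha J_\alpha(x,V)$. Along the sequence $\alpha_n\downarrow0$ from the proof of Theorem~\ref{thm:bellman} one has $\alpha_n v_{\alpha_n}(x)=\alpha_n w_{\alpha_n}(x)+\alpha_n v_{\alpha_n}(z)\to 0+\lambda$, so $\liminf_{\alpha\to0}\alpha v_\alpha(x)\le\lambda$, and therefore $J(x,V)\le\liminf_{\alpha\to0}\alpha J_\alpha(x,V)\le\liminf_{\alpha\to0}\alpha v_\alpha(x)\le\lambda$.

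\emph{For (3):} since (2) gives $J(x,V^*)\le\lambda$, it suffices to prove $J(x,V^*)\ge\lambda$. If $c(x,\xi)=e(\xi)$, Proposition~\ref{propseparated} shows that under $V^*$ every impulse is to the single point $\hat x$, the successive excursions of $(X^*_s)$ away from $\hat x$ form an i.i.d.\ sequence with finite mean length $\ee^{\hat x}\{\hat\tau\}$, and $\lambda=\int f\,d\eta+e(\hat x)/\ee^{\hat x}\{\hat\tau\}$ with $\eta$ as in \eqref{invmeas}. Writing $N_T$ for the number of impulses in $[0,T]$, the renewal--reward theorem gives $\tfrac1T\int_0^T f(X^*_s)\,ds\to\int f\,d\eta$ $\prob^x$-a.s.\ (the atypical first excursion from $x$ has finite mean and is negligible) and the elementary renewal theorem gives $\tfrac1T\ee^x\{N_T\}\to1/\ee^{\hat x}\{\hat\tau\}$; Fatou applied to the non-negative quantity $\tfrac1T\ee^x\{\int_0^T(f(X^*_s)+\|f\|)\,ds\}$ then yields $\liminf_T\tfrac1T\ee^x\{\int_0^T f(X^*_s)\,ds+e(\hat x)N_T\}\ge\lambda$, i.e.\ $J(x,V^*)\ge\lambda$. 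If instead $w\le C<\infty$, I set $\tilde Z_t=\int_0^t(f(X^*_s)-\lambda)\,ds+\sum_{i:\tau^*_i\le t}c(X^*_{\tau^*_i-},\xi^*_i)+w(X^*_t)$; by Lemma~\ref{lem:super_mart} applied excursion by excursion together with optimality of the $\tau^*_i$, $\tilde Z$ is a $\prob^x$-martingale on $V^*$ (its jumps at impulse times vanish since $c(X^*_{\tau^*_i-},\xi^*_i)+w(\xi^*_i)=Mw(X^*_{\tau^*_i-})=w(X^*_{\tau^*_i-})$), so $\ee^x\{\tilde Z_{\tau^*_n\wedge T}\}=w(x)$. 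Using $w(X^*_{\tau^*_n\wedge T})\le C$ and letting $n\to\infty$ (the cost sums decrease in $n$ with expectations bounded below uniformly in $n$, so monotone convergence applies) gives $\ee^x\{\int_0^T f(X^*_s)\,ds+\sum_{i:\tau^*_i\le T}c(X^*_{\tau^*_i-},\xi^*_i)\}\ge\lambda T+w(x)-C$; dividing by $T$ and letting $T\to\infty$ gives $J(x,V^*)\ge\lambda$. In both cases, combined with (2), $J(x,V^*)=\lambda$.

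\emph{Main obstacle.} The delicate point is part (3): converting the identity available at the impulse epochs $\tau^*_n$ into the continuous-time average. One must bound the contribution of the unfinished last excursion before $T$ and secure integrability for the accumulated (possibly unbounded) impulse costs $\sum_{\tau^*_i\le T}c(X^*_{\tau^*_i-},\xi^*_i)$ and for the terminal value $w(X^*_T)$ — here assumption~\ref{ass:unifintcalpha} applied on the compact landing set $U$, the bound \eqref{eq22''} on $Mw$, and the linear-in-$n$ growth of $\ee^x\{\tau^*_n\}$ are what make the bookkeeping go through, while the i.i.d.\ renewal structure in the separated case and the one-sided bound on $w$ in the other case are exactly the structural inputs that allow this control. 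By contrast, (1) and (2) reduce to algebra once \eqref{eq230} and Lemma~\ref{lem:tauber} are in hand.
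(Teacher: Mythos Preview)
Your proof of parts (1) and (2) is correct and matches the paper's argument essentially line for line: rearranging \eqref{eq230}, dividing by $\ee^x\{\tau_n\}$, and invoking Lemma~\ref{lem:tauber} together with $\alpha_n v_{\alpha_n}(x)\to\lambda$.

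For part (3), your treatment of the case ``$w$ bounded from above'' is also the paper's: both rely on the identity $\ee^x\{\tilde Z_{\tau^*_n\wedge T}\}=w(x)$ (the paper writes it as \eqref{eq:optfunc}), pass to the limit $n\to\infty$ using that all terms under the expectation are bounded above, and finish by dividing by $T$. One remark: your justification that $\tilde Z$ is a \emph{martingale} along $V^*$ leans on more than Lemma~\ref{lem:super_mart}, which only gives a supermartingale; the martingale property on $[0,\tau^*_1]$ is the Snell--envelope fact implicit in the optimality of $\tau^*_1$ (the paper obtains the needed equality directly ``from iterated application of Bellman equation \eqref{eq22} and Lemma~\ref{lem:super_mart}'' rather than by naming a martingale). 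The conclusion is the same.

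Where you genuinely diverge from the paper is the separated--cost case $c(x,\xi)=e(\xi)$. The paper invokes Proposition~\ref{propseparated} to identify the invariant measure $\eta$ of the controlled process, then appeals to the ergodic theorem for Harris Markov processes (Az\'ema--Kaplan--Revuz) to obtain $\lim_{T\to\infty}\tfrac1T\ee^x\{\int_0^T(f(X^*_s)-\gamma)\,ds\}=\eta(f-\gamma)$, and substitutes $\gamma=-e(\hat x)/\ee^{\hat x}\{\hat\tau\}$. You instead exploit directly the i.i.d.\ regeneration at $\hat x$ and use the elementary/renewal--reward theorems plus Fatou. Your route is more self--contained and avoids the external ergodic theorem, at the price of being specific to the situation where every impulse lands at a single point $\hat x$; the paper's Harris--ergodic argument would survive if the optimal landing point varied, since it only uses that $(X^*_t)$ is Harris with invariant measure $\eta$. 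Both arguments are valid here.
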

\begin{proof}
From \eqref{eq230} for any integrable strategy $(\tau_i, \xi_i)$ we have
\begin{equation}\label{230cont}
w(x)+\lambda \ee^x\left\{\tau_n\right\}\geq \ee^x \Big\{ \int_0^{\tau_n}  f(X_s)ds + \sum_{i=1}^n c(X_{\tau_i-}, \xi_i) + w(\xi_n) \Big\}.
\end{equation}
Since $w$ is bounded on $U$, $\ee^x\left\{\tau_n\right\}<\infty$ and $\tau_n \to \infty$ we obtain
that $\lambda\geq \hat{J}\big(x,(\tau_i, \xi_i)\big)$ with equality for the strategy $(\tau^*_i, \xi^*_i)$ defined in Theorem \ref{thm:bellman}, which completes the proof of assertion 1.

Fix $x \in E$ and an admissible strategy $(\tau_i, \xi_i)$. Denote by $(X_s)$ the controlled process. Recalling that $v_\alpha$ is the discounted value function and $w_\alpha(x) = v_\alpha(x) - v_\alpha(z)$ we have
\[
w_{\alpha}(x) + v_{\alpha}(z) \ge \ee^x \Big\{ \int_0^\infty e^{-\alpha s} f(X_s) ds + \sum_{i=1}^\infty \ind{\tau_i < \infty} e^{-\alpha \tau_i} c(X_{\tau_i-}, \xi_i) \Big\}.
\]
Multiply both sides by $\alpha$ and take $\liminf_{\alpha \to 0}$ using Lemma \ref{lem:tauber} to show $\lambda \ge J\big(x, (\tau_i, \xi_i)\big)$. Here we also use the fact that $\liminf_{\alpha \to 0} w_\alpha(x) \le w(x)$.

In the case of separated cost we use Proposition \ref{propseparated} by which the measure $\eta$ defined in \eqref{invmeas} is invariant for $X^*$ controlled by the strategy $(\tau^*_i, \xi^*_i)$. Then for any $T>0$ and $\gamma$ we have that
\(\ee^\eta \big\{ \int_0^T (f(X^*_s)-\gamma) ds\big\}=T \eta(f-\gamma),\)
where $\ee^\eta$ means that the process starts with measure $\eta$. Moreover $X^*$ is a Harris Markov process. By ergodic theorem for Harris Markov processes (see Theorem II.1 of \cite{Azema1967}) we obtain that
\begin{equation}\label{eqlimit1}
\lim_{T\to \infty}\frac{1}{T} \ee^x \Big\{ \int_0^T (f(X^*_s)-\gamma) ds\Big\}= \eta(f-\gamma)
\end{equation}
for $\eta$ almost all $x$. To show that the above limit holds for all $x \in E$ use Assumption \ref{ass:positrec} which implies that $\ee^x\left\{D_U\right\}<\infty$ for any $x$.  Then by the proof of Theorem \ref{thm:bellman} (c.f. \cite[Theorem 4.8]{Palczewski2016a}) we have $\sup_{x\in U}\ee^x\left\{\hat{\tau}\right\}<\infty$, i.e., $\ee^x \{ \tau^*_1 \} < \infty$ for any $x \in E$, which implies \eqref{eqlimit1} for all $x$. Hence, in particular, for $\hat x$:
\begin{equation}\label{eqlimit}
\lim_{T\to \infty}\frac1T \ee^{\hat{x}} \Big\{ \int_0^T (f(X^*_s)-\gamma) ds\Big\}= \eta(f-\gamma).
\end{equation}
Letting in the last limit $\gamma=-\frac{e(\hat{x})}{\ee^{\hat{x}}\left\{\hat{\tau}\right\}}$ we obtain
\[\lim_{T\to \infty}{1 \over T} \ee^{\hat{x}} \Big\{ \int_0^{T}  f(X^*_s)ds + \sum_{i=1}^\infty  \ind{\tau_i^* \le T} e(\hat{x}) \Big\}={\ee^{\hat{x}}\left\{\int_0^{\hat{\tau}} f(X_s)ds + e(\hat{x})\right\} \over \ee^{\hat{x}}\left\{\hat{\tau}\right\}}=\lambda.\]

In the case of a general cost function and $w$ bounded from above, we obtain from iterated application of Bellman equation \eqref{eq22} and Lemma \ref{lem:super_mart}
\begin{equation}\label{eq:optfunc}
w(x) + \lambda T = \ee^x \Big\{ \int_0^{\tau^*_n \wedge T} f(X^*_s) ds + \sum_{i=1}^{n-1} \ind{\tau^*_i \le T} c(X^n_{\tau^*_i-}, \xi^*_i) + w(X^*_{\tau^*_n \wedge T})\Big\}.
\end{equation}
There is a finite number of impulses before time $T$, $\prob^x$-a.s., because $c (x, \xi) \le c < 0$ and $f$ and $w$ are bounded from above. Hence $\lim_{i \to \infty} \tau^*_i = \infty$, $\prob^x$-a.s.
Passing to the limit with $n$ using Fatou's lemma and boundedness from above of all terms under expectation and dividing both sides by $T$ yields
\[
\frac{w(x)}{T} + \lambda \le \frac{1}{T} \ee^x \Big\{ \int_0^T f(X^*_s) ds + \sum_{i=1}^\infty \ind{\tau^*_i \le T} c(X^n_{\tau^*_i-}, \xi^*_i) + w(X^*_T)\Big\}.
\]
Taking $\liminf_{T \to \infty}$ on both sides completes the proof of optimality of $(\tau^*_i, \xi^*_i)$ provided that one shows that $\liminf_{T \to \infty} \ee^x \{ w(X^*_T) / T \} \le 0$ and this is in the case because $w$ is bounded from above.
\end{proof}

Boundedness of $w$, required above for proving the optimality of $(\tau^*_i, \xi^*_i)$ for the functional $J$, is established in the following lemma.
\begin{lemma}\label{lem:optim_strat}
Assume \ref{ass:weak_feller}, \ref{ass:speed_ergodic}. If $\mu(f)<\liminf_{\|x\|\to \infty} f(x)$ then $q$ is bounded from below. Assume additionally \ref{ass:cost_function}-\ref{ass:unifintc_b}, and $\limsup_{\alpha \to 0}\alpha v_\alpha(z)=\lambda>\mu(f)$. If $ \limsup_{\|x\|\to \infty}f(x)< \lambda$ or $q$ is bounded from above then  $w$ is bounded from above.
\end{lemma}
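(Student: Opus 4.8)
This statement bundles two independent facts; the plan is to establish the lower bound on $q$ directly, then obtain the upper bound on $w$ by quoting the estimate \eqref{wbounds}.

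\textbf{Lower bound on $q$.} The hypothesis $\mu(f) < \liminf_{\|x\|\to\infty} f(x)$ gives a compact set $K_0$ with $f(x) > \mu(f)$ for all $x \notin K_0$. Enlarging $K_0$ by a compact set of positive $\mu$-measure (available under \ref{ass:speed_ergodic}) produces a compact $K$ with $\mu(K) > 0$ and still $f > \mu(f)$ on $K^c$; being of positive $\mu$-measure, $K$ is recurrent, so $D_K := \inf\{t \ge 0 : X_t \in K\}$ is $\prob^x$-a.s. finite for every $x$. I would apply the martingale identity \eqref{eq0potent} to the bounded stopping time $D_K \wedge T$; since $X_s \notin K$, hence $f(X_s) - \mu(f) > 0$, for $s < D_K$, the integral term is nonnegative and $q(x) \ge \ee^x\{q(X_{D_K \wedge T})\}$. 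Splitting this expectation: on $\{D_K \le T\}$ the integrand is $q(X_{D_K})$ with $X_{D_K} \in K$, so dominated convergence (dominating constant $\sup_{y \in K}|q(y)| < \infty$ by continuity of $q$) sends that part to $\ee^x\{q(X_{D_K})\}$; on $\{D_K > T\}$, the set $A_T := \{D_K > T\} \in \ef_T$ has $\prob^x\{A_T\} \to 0$, so \eqref{eqn:eq0potent_AT} applied with $\Gamma = \{x\}$ and $\alpha = 0$ yields $\ee^x\{\ind{A_T}|q(X_T)|\} \to 0$. Letting $T \to \infty$ gives $q(x) \ge \ee^x\{q(X_{D_K})\} \ge -\sup_{y \in K}|q(y)|$, a bound that does not depend on $x$.

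\textbf{Upper bound on $w$.} All assumptions of Theorem \ref{thm:bellman}, hence of the lemma establishing \eqref{wbounds}, are in force, so \eqref{wbounds} and its corollary may be invoked. If $\limsup_{\|x\|\to\infty} f(x) < \lambda$, then $f(x) - \lambda \le 0$ for $x$ outside some compact set, which is exactly the hypothesis under which that lemma concludes $w$ is bounded from above. If instead $q$ is bounded from above, then the first entry of the minimum bounding $w$ in \eqref{wbounds}, namely $q(x) + \kappa + \sup_{\alpha \in (0,1)}\|q_\alpha^-\|$, is bounded in $x$ — the supremum being finite by \ref{ass:boundedpotential} — so $w$ is again bounded from above.

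The only genuine obstacle lies in the lower bound on $q$: passing to the limit in $\ee^x\{q(X_{D_K \wedge T})\}$ requires discarding the mass carried on $\{D_K > T\}$, and since $q$ is a priori not bounded below — that is precisely what is being proved — this cannot be done by a crude estimate but relies on the integrability property \eqref{eqn:eq0potent_AT} of the zero-potential together with recurrence of $K$. The remaining, purely bookkeeping, issue is to pick $K$ simultaneously recurrent and disjoint from $\{f \le \mu(f)\}$, settled by the enlargement described above.
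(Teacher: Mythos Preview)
Your proof is correct and, for the upper bound on $w$, matches the paper's route: both cases are dispatched by the unnumbered lemma containing \eqref{wbounds} (for the case $\limsup_{\|x\|\to\infty} f(x)<\lambda$ the paper actually re-derives the supermartingale estimate from Lemma \ref{lem:super_mart} inline rather than citing the final clause of that lemma, but it is the same computation).

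For the lower bound on $q$ there is a small but genuine difference. The paper simply invokes Lemma \ref{lem:bounded from below}, which runs the same hitting-time idea on the discounted potentials $q_\alpha$ using the stopping time $D_U\wedge D_{K_f}$ and then appeals to the uniform-on-compacts convergence $q_\alpha\to q$ from Lemma \ref{lq}. Your direct argument works on $q$ itself via the martingale identity \eqref{eq0potent} together with the tail control \eqref{eqn:eq0potent_AT}. The mechanism is identical (positivity of $f-\mu(f)$ outside a compact set kills the integral term, leaving $q$ bounded below by its supremum over that set), but your version has the minor advantage of using only \ref{ass:weak_feller}--\ref{ass:speed_ergodic}, exactly as the first sentence of the lemma states, whereas Lemma \ref{lem:bounded from below} is formulated under the extra hypothesis \ref{ass:positrec}.
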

\begin{proof}
When $\mu(f)<\liminf_{\|x\|\to \infty} f(x)$, boundedness from below of $q$ follows from Lemma \ref{lem:bounded from below}. If $\limsup_{\|x\|\to \infty}f(x)< \lambda$ the set $F=\left\{x: f(x)\geq \lambda\right\}$ is compact. Exploiting that in \eqref{eq22} one may take bounded stopping times,  Lemma \ref{lem:super_mart} implies
\begin{eqnarray*}
w(x)
&\le &\sup_{\tau \text{-bounded}} \ee^x \Big\{\int_0^{\tau \wedge D_{F\cup U}}(f(X_s)-\lambda)ds + \ind{\tau\leq D_{F \cup U}} Mw(X_\tau) + \ind{\tau> D_{F\cup U}}w(X_{D_{F \cup U}})\Big\}\\
&\le& \sup_{y\in U\cup F}w(y),
\end{eqnarray*}
which means that $w$ is bounded form above. If $q$ is bounded from above then by \eqref{wbounds} and Lemma \ref{lem:bounded from below} $w$ is also bounded from above.
\end{proof}

\begin{theorem}\label{donothing} Under \ref{ass:weak_feller}, \ref{ass:speed_ergodic}, \ref{ass:positrec} when $\limsup_{\alpha \to 0}\alpha v_\alpha(z)=\mu(f)$ we have that $\alpha v_\alpha(x)\to \mu(f)$, as $\alpha \to 0$,  uniformly in $x$ from compact sets and the strategy `do nothing' is optimal for the functional $J$.
\end{theorem}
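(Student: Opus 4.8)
The plan is to show, under the hypothesis $\limsup_{\alpha \to 0}\alpha v_\alpha(z)=\mu(f)$, that the whole family $\alpha v_\alpha(x)$ converges to $\mu(f)$ uniformly on compacts, and then to identify the value of the "do nothing" strategy with $\mu(f)$ and argue no admissible strategy can do better. First I would establish the lower bound: Lemma \ref{lem:from_above} already gives $\liminf_{\alpha\to0}\alpha v_\alpha(x)\ge\mu(f)$ for every $x$, so it remains to prove the matching upper bound $\limsup_{\alpha\to0}\alpha v_\alpha(x)\le\mu(f)$, uniformly on compacts. For this I would use the bound $v_\alpha(x)\le v_\alpha(z)+\bigl(-c(z,x)\bigr)$ valid for $x\in U$ from \eqref{eq10} (so $\alpha v_\alpha(x)=\alpha v_\alpha(z)+\alpha w_\alpha(x)$, and on $U$ the term $\alpha w_\alpha$ is controlled by $\alpha\kappa\to0$ via \eqref{eq11}), handling general $x\in E$ by first hitting $U$: apply the dynamic programming identity for $v_\alpha$ with the stopping time $D_U$, together with Assumption \ref{ass:positrec} which bounds $\ee^x\{D_U\}$ on compacts, and the bound $|v_\alpha|\le\|f\|/\alpha$, to transfer the estimate from $U$ to $\Gamma$. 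Concretely, $\alpha v_\alpha(x)\le \ee^x\{\alpha\int_0^{D_U}e^{-\alpha s}f(X_s)ds\}+\ee^x\{\alpha e^{-\alpha D_U}v_\alpha(X_{D_U})\}$; the first term is $O(\alpha\|f\|\ee^x\{D_U\})\to0$ on compacts, and in the second we insert the bound on $U$ and use $\alpha e^{-\alpha D_U}v_\alpha(z)=\alpha v_\alpha(z)\,\ee^x\{e^{-\alpha D_U}\}+o(1)\to\mu(f)$ using $\ee^x\{1-e^{-\alpha D_U}\}\le\alpha\ee^x\{D_U\}\to0$. Combining with Lemma \ref{lem:from_above} gives the claimed uniform convergence $\alpha v_\alpha(x)\to\mu(f)$ on compacts.

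Next I would show $J(x,\text{do nothing})=\mu(f)$. For the "do nothing" strategy there are no impulses, so the functional reduces to $\liminf_{T\to\infty}\frac1T\ee^x\{\int_0^T f(X_s)ds\}=\liminf_{T\to\infty}\frac1T\int_0^T P_sf(x)ds$. Under \ref{ass:speed_ergodic}, $|P_sf(x)-\mu(f)|\le K(x)\|f\|h(s)$ with $\int_0^\infty h(s)ds<\infty$, so $\frac1T\int_0^T\bigl(P_sf(x)-\mu(f)\bigr)ds\to0$ and the value of "do nothing" equals $\mu(f)$ for every $x$.

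Finally, for optimality I would show no admissible strategy beats $\mu(f)$, i.e. $J(x,V)\le\mu(f)$ for every admissible $V$. The cleanest route is the vanishing-discount inequality already used in the proof of Theorem \ref{optimalityoflambda}(2): for any admissible strategy $V=(\tau_i,\xi_i)$ with controlled process $(X_s)$ one has
\[
v_\alpha(x)\ge\ee^x\Big\{\int_0^\infty e^{-\alpha s}f(X_s)ds+\sum_{i=1}^\infty\ind{\tau_i<\infty}e^{-\alpha\tau_i}c(X_{\tau_i-},\xi_i)\Big\},
\]
so multiplying by $\alpha$, taking $\liminf_{\alpha\to0}$, and applying the Tauberian Lemma \ref{lem:tauber} (with $f(s)$ there being the path $s\mapsto f(X_s)$ under $\ee^x$, $Y_i=c(X_{\tau_i-},\xi_i)\le c<0$) gives $\limsup_{\alpha\to0}\alpha v_\alpha(x)\ge J(x,V)$. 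Since we have just shown $\lim_{\alpha\to0}\alpha v_\alpha(x)=\mu(f)$, this yields $J(x,V)\le\mu(f)=J(x,\text{do nothing})$, proving that "do nothing" is optimal.

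The main obstacle I anticipate is the transfer of the upper bound on $\alpha v_\alpha$ from $U$ to an arbitrary compact $\Gamma$: one must carefully combine the growth bound $|v_\alpha|\le\|f\|/\alpha$ with the finiteness of $\ee^x\{D_U\}$ from \ref{ass:positrec} and the estimate $\ee^x\{1-e^{-\alpha D_U}\}\le\alpha\ee^x\{D_U\}$ so that the product $\alpha e^{-\alpha D_U}v_\alpha(X_{D_U})$ does not blow up — the point being that $v_\alpha$ is only bounded by $\|f\|/\alpha$, but it is multiplied by $\alpha$ and the excess over $v_\alpha(z)$ on $U$ is bounded by $\kappa$ independent of $\alpha$. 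Everything else — the ergodic average for "do nothing" and the Tauberian upper bound — is essentially a repackaging of arguments already developed in the preceding sections.
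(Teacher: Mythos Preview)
Your proposal is correct and follows essentially the same route as the paper: lower bound from Lemma~\ref{lem:from_above}, upper bound on $U$ via \eqref{eq10}--\eqref{eq11}, transfer to general compacts using \ref{ass:positrec}, and then the Tauberian Lemma~\ref{lem:tauber} to bound $J(x,V)$ by $\lim_{\alpha\to0}\alpha v_\alpha(x)=\mu(f)$.

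One small point on the transfer step: the inequality you write, $v_\alpha(x)\le \ee^x\{\int_0^{D_U}e^{-\alpha s}f(X_s)\,ds\}+\ee^x\{e^{-\alpha D_U}v_\alpha(X_{D_U})\}$, is not literally a dynamic programming identity for the impulse problem, since an optimal strategy may impulse before the uncontrolled hitting time $D_U$. The paper sidesteps this by observing directly that any impulse shifts the process into $U$ (with nonpositive cost), so the controlled process enters $U$ no later than $D_U$; hence for every strategy one has $J_\alpha(x,V)\le \|f\|\,\ee^x\{D_U\}+\sup_{y\in U}v_\alpha(y)$, which gives $\alpha v_\alpha(x)\le \alpha\|f\|\,t(x)+\alpha\sup_{y\in U}v_\alpha(y)\to\mu(f)$ uniformly on compacts. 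Your decomposition via $e^{-\alpha D_U}v_\alpha(X_{D_U})=e^{-\alpha D_U}(v_\alpha(z)+w_\alpha(X_{D_U}))$ works once you replace the ``identity'' by this coarser upper bound; otherwise the argument is the same.
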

\begin{proof}
By Lemma \ref{lem:from_above} we have $\lim_{\alpha \to 0}\alpha v_\alpha(z)=\mu(f)$. Assume that there is a sequence $x_n\in U$ and $\alpha_n\to 0$  for which
\(\lim_{n \to \infty}\alpha_n v_{\alpha_n}(x_n)>\mu(f).\)
Then
\[\lim_{n \to \infty}\alpha_n v_{\alpha_n}(z)\geq \lim_{n \to \infty}\alpha_n \big(c(z,x_n)+ v_{\alpha_n}(x_n)\big)>\mu(f)\]
a contradiction. Combining it with Lemma \ref{lem:from_above} proves
\(
\lim_{\alpha \to 0}\sup_{y\in U}\alpha v_\alpha(y)=\mu(f).
\)
Assume now that a sequence $x_n$ is from an arbitrary compact set $\Gamma$. Assumption \ref{ass:positrec} yields $\sup_n \ee^{x_n}\{D_U\}<\infty$, and this is also true when the trajectory is controlled as then the process may enter $U$ even earlier because each impulse shifts to $U$. Therefore,
\[
\lim_{n \to \infty}\alpha_n v_{\alpha_n}(x_n)\leq \lim_{n \to \infty}\alpha_n \big(\|f\| \sup_{y\in \Gamma} t(y) + \sup_{x\in U} v_{\alpha_n}(x)\big)=\mu(f),
\]
which together with Lemma \ref{lem:from_above} gives $\alpha_n v_{\alpha_n} (x_n) \to \mu(f)$. In fact, the latter argument proves uniform on compact sets convergence of $\alpha v_\alpha(x)$ to $\mu(f)$. Now, by Lemma \ref{lem:tauber}
\begin{align*}
&\liminf_{T \to \infty} \frac{1}{T} \ee^x \Big\{ \int_0^T f(X_s) ds + \sum_{i = 1}^\infty \ind{\tau_i \le T} c(X_{\tau_i-}, \xi_i) \Big\}\\
&\le \liminf_{\alpha\to 0} \alpha \ee^x \Big\{ \int_0^\infty e^{-\alpha s} f(X_s) ds + \sum_{i=1}^\infty \ind{\tau_i < \infty} e^{-\alpha \tau_i} c(X_{\tau_i-}, \xi_i) \Big\}
\le \liminf_{\alpha\to 0} \alpha v_\alpha(x)=\mu(f).
\end{align*}
Since $\sup_{(\tau_i, \xi_i)} J\big(x, (\tau_i, \xi_i)\big)\ge \mu(f)$, it is clear that the strategy `do nothing' is optimal.
\end{proof}

\section{Relaxation of assumption on \texorpdfstring{$q$}{q}}
In the previous section we required that the function $f$ is such that its potential $q$ is bounded from below, and we constructed an optimal strategy when $w$ was bounded from above, c.f. Lemma \ref{lem:optim_strat}. We shall now approximate a general continuous bounded $f$ by functions with potentials bounded from below and corresponding $w$ being bounded above. Without loss of generality we can restrict ourselves to functions $f$ which are nonnegative. We also assume that $f$ is not constant $\mu$-a.s.; otherwise the control problem is trivial. The main result of this section is Theorem \ref{thm:bar_lambda} which  shows that the optimal value of the functional \eqref{eqn:main_functional} for a general continuous bounded $f$ does not depend on $x$ and provides explicit construction of $\ve$-optimal control strategies.

Let $B_{z,N}$ be a ball with center in $z$ and radius $N$ ($z \in U$ is the point fixed in the previous section for the definition of $w_\alpha$). For $\eta \in (\mu(f), \|f\|)$ define
\begin{equation}
f_N(x)=f(x)(1-\rho(x,B_{z,N}))^+ + \eta(1-\rho(x,B_{z,N+1}^c))^+.
\end{equation}

\begin{lemma} We have $\|f_N\| \le \|f\|$ and $\lim_{N \to \infty} \mu(f_N) = \mu(f)$. For sufficiently large $N$ the set $\left\{x: f_N(x)\leq \mu(f_N)\right\}$ is contained in $B_{z,N+1}$.
\end{lemma}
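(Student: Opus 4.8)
The plan is to first read off the shape of $f_N$ from its definition and then verify the three assertions separately. The multiplier $(1-\rho(\cdot,B_{z,N}))^+$ equals $1$ on $B_{z,N}$ and vanishes on $B_{z,N+1}^c$, whereas $(1-\rho(\cdot,B_{z,N+1}^c))^+$ vanishes on $B_{z,N}$ and equals $1$ on $B_{z,N+1}^c$; hence $f_N=f$ on $B_{z,N}$, $f_N=\eta$ on $B_{z,N+1}^c$, and $f_N$ interpolates between the two on the annulus $B_{z,N+1}\setminus B_{z,N}$.

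For $\|f_N\|\le\|f\|$ the crucial point is that the two nonnegative weights $w_1(x):=(1-\rho(x,B_{z,N}))^+$ and $w_2(x):=(1-\rho(x,B_{z,N+1}^c))^+$ always satisfy $w_1(x)+w_2(x)\le 1$. This rests on the separation estimate $\rho(x,B_{z,N})+\rho(x,B_{z,N+1}^c)\ge 1$, which follows from the triangle inequality: if $\rho(y_1,z)<N$ and $\rho(y_2,z)\ge N+1$ then $\rho(y_1,y_2)\ge\rho(y_2,z)-\rho(y_1,z)>1$, so the two sets lie at distance at least $1$ from one another, and taking infima, $\rho(x,B_{z,N})+\rho(x,B_{z,N+1}^c)\ge\operatorname{dist}(B_{z,N},B_{z,N+1}^c)\ge 1$. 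Granting this, $0\le f_N(x)=f(x)w_1(x)+\eta w_2(x)\le\|f\|\big(w_1(x)+w_2(x)\big)\le\|f\|$, where I use $0\le f\le\|f\|$ and $0<\eta<\|f\|$.

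The limit $\mu(f_N)\to\mu(f)$ is then routine: for each fixed $x$ one has $f_N(x)=f(x)$ as soon as $N$ is large enough that $x\in B_{z,N}$, so $f_N\to f$ pointwise, and since $0\le f_N\le\|f\|$ and $\mu$ is a probability measure, dominated convergence applies. For the last statement I would argue by contraposition: if $x\notin B_{z,N+1}$, then by the shape of $f_N$ noted above $f_N(x)=\eta$; since $\mu(f)<\eta$ and $\mu(f_N)\to\mu(f)$, there is $N_0$ with $\mu(f_N)<\eta$ for all $N\ge N_0$, whence $f_N(x)=\eta>\mu(f_N)$ and $x\notin\{f_N\le\mu(f_N)\}$. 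Thus $\{f_N\le\mu(f_N)\}\subseteq B_{z,N+1}$ for $N\ge N_0$.

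The only step that is not pure bookkeeping is the separation estimate $\rho(x,B_{z,N})+\rho(x,B_{z,N+1}^c)\ge 1$ used to bound $\|f_N\|$; because $E$ is merely a locally compact metric space I would phrase everything through the triangle inequality and distances to balls rather than explicit formulas, and pay minor attention to whether the balls are taken open or closed (the argument is insensitive to that choice, and the degenerate case $B_{z,N+1}=E$ is trivial).
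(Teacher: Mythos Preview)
Your proof is correct and follows essentially the same direct route as the paper's (very terse) argument. Your explicit verification of $w_1+w_2\le 1$ via the triangle inequality fills in a detail the paper simply asserts, and your use of dominated convergence in place of the paper's direct bound $|f_N-f|\le 2\|f\|\,\ind{B_{z,N}^c}$ is a harmless variation.
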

\begin{proof}
The bound for the norm of $f_N$ follows easily from the definition. Then  $\mu(f_N) = \mu(f) + \mu(f_N - f) \le  2\|f\| \mu (B_{z, N+1}) \to 0$ as $N \to \infty$. The remaining claim of the lemma is now obvious.
\end{proof}

For an admissible impulse strategy $V=(\tau_i, \xi_i)$ and $\delta \in (0, -c)$ we define three functionals
\begin{align*}
J^{N,c}\big(x, V\big) &= \liminf_{T \to \infty} \frac{1}{T} \ee^x \Big\{ \int_0^T f_N(X_s) ds + \sum_{i = 1}^\infty \ind{\tau_i \le T} c(X_{\tau_i-}, \xi_i) \Big\},\\
J^{N,c+\delta}\big(x, V\big) &= \liminf_{T \to \infty} \frac{1}{T} \ee^x \Big\{ \int_0^T f_N(X_s) ds + \sum_{i = 1}^\infty \ind{\tau_i \le T} \big(c(X_{\tau_i-}, \xi_i)+\delta\big) \Big\},\\
J^{N,\delta}\big(x, V\big) &= \liminf_{T \to \infty} \frac{1}{T} \ee^x \Big\{ \int_0^T  (f(X_s)-f_N(X_s)) ds - \sum_{i = 1}^\infty \ind{\tau_i \le T}\delta \Big\},
\end{align*}
related value functions
\begin{align*}
\bar{\lambda}(x)&=\sup_V J\big(x,V\big), &\bar{\lambda}^{N,c}(x)&=\sup_V J^{N,c}\big(x, V \big),\\
\bar{\lambda}^{N,c+\delta}(x)&=\sup_V J^{N,c+\delta}\big(x, V\big), &\bar{\lambda}^{N,\delta}(x)&=\sup_V J^{N,\delta}\big(x, V \big),
\end{align*}
and discounted value functions
\begin{align*}
v^{N,c}_\alpha (x) &= \sup_V \ee^x \Big\{ \int_0^\infty e^{-\alpha s} f_N(X_s) ds + \sum_{i=1}^\infty \ind{\tau_i < \infty} e^{-\alpha \tau_i} c(X_{\tau_i-}, \xi_i) \Big\},\\
v^{N,c+\delta}_\alpha (x) &= \sup_V \ee^x \Big\{ \int_0^\infty e^{-\alpha s} f_N(X_s) ds + \sum_{i=1}^\infty \ind{\tau_i < \infty} e^{-\alpha \tau_i} \big(c(X_{\tau_i-}, \xi_i) + \delta)\Big\},\\
v^{N,\delta}_\alpha (x) &= \sup_V \ee^x \Big\{ \int_0^\infty e^{-\alpha s} \big(f(X_s) -f_N(X_s)\big)ds - \delta \sum_{i=1}^\infty e^{-\alpha \tau_i}) \Big\},
\end{align*}
with $v_\alpha$ defined in Section \ref{sec:impulse_control}.

The introduction of the cost $\delta > 0$ is only for technical reasons so that we can use results from previous sections to characterise $\bar{\lambda}^{N,\delta}(x)$ which evaluates the difference between two running costs. We will prove that $\lim_{N \to \infty} \bar{\lambda}^{N,\delta}(x) = 0$ which, intuitively, should hold for $\delta = 0$ for most ergodic processes. Indeed, impulses can only shift the process to a compact set $U$ and uncontrolled process will spend little time in the complement of a sufficiently large ball $B^c_{z, N}$ as $\mu(B^c_{z, N}) \to 0$ as $N \to \infty$. Providing an accurate proof of this fact is beyond the scope of this paper and we will assume $\delta > 0$.

Our goal now is to choose such $\eta$ in the definition of $f_N$ that $\eta < \limsup_{\alpha \to 0} \alpha v^{N,c}_\alpha (z)$, i.e., by Lemma \ref{lem:optim_strat} and Theorem \ref{optimalityoflambda} function $\bar\lambda^{N,c}(x)$ is constant and there is a strategy realising this value. This will be further used to show that $\bar\lambda$ is constant and to provide $\ve$-optimal strategies for the functional $J$. So far we can only establish that $\bar\lambda$ is constant on $U$ and this value is a lower bound for $\bar \lambda$ on $E$.
\begin{lemma}
Function $\bar\lambda$ is constant on $U$ and $\bar\lambda(x) \ge \bar\lambda(z)$ for any $x \in E$.
\end{lemma}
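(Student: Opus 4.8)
The plan is to use that a single impulse may move the process to an arbitrary point of $U$ at a bounded cost, so, after dividing by $T$ and letting $T\to\infty$, its contribution is negligible.

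First I would prove $\bar\lambda(x)\ge\bar\lambda(z)$ for every $x\in E$. Fix $x$ and an arbitrary admissible strategy $V=(\tau_i,\xi_i)$ for $z$. Define $V'=(\tau_i',\xi_i')$ for the starting point $x$ by prepending an impulse to $z$ at time $0$: $\tau_1'=0$, $\xi_1'=z$, and $\tau_{i+1}'=\tau_i$, $\xi_{i+1}'=\xi_i$ for $i\ge1$, using the convention of Section~\ref{sec:intro} that the state before the first impulse is the starting point $x$ (consistently with the definition of $\xi_1^*$ in Theorem~\ref{thm:bellman}) and the ``natural meaning'' convention for coinciding impulse times. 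Then $V'$ is admissible for $x$: $z\in U$, the $\tau_i'$ form a non-decreasing sequence of stopping times with $\tau_i'\to\infty$ $\prob^x$-a.s., because after the impulse at time $0$ the controlled process under $V'$ started at $x$ has the law of the controlled process under $V$ started at $z$. Hence, for every $T>0$,
\begin{align*}
&\ee^x\Big\{\int_0^T f(X_s) ds+\sum_{i\ge1}\ind{\tau_i'\le T} c(X_{\tau_i'-},\xi_i')\Big\}\\
&\qquad= c(x,z)+\ee^z\Big\{\int_0^T f(X_s) ds+\sum_{i\ge1}\ind{\tau_i\le T} c(X_{\tau_i-},\xi_i)\Big\},
\end{align*}
with $c(x,z)$ a finite constant by continuity of $c$. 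Dividing by $T$, taking $\liminf_{T\to\infty}$ and using $c(x,z)/T\to0$, we obtain $J(x,V')=J(z,V)$; taking the supremum over $V$ gives $\bar\lambda(x)\ge\bar\lambda(z)$, which is the second assertion.

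For $x\in U$ I would run the symmetric argument: to an admissible strategy $V$ for $x$ prepend an impulse $z\mapsto x$ at time $0$ (legitimate because $x\in U$), obtaining an admissible strategy for $z$ whose value of $J$ differs from $J(x,V)$ only by the bounded constant $c(z,x)$, which again washes out under $\liminf_{T\to\infty}\frac1T(\cdot)$; hence $\bar\lambda(z)\ge\bar\lambda(x)$. Combined with the previous step, $\bar\lambda(x)=\bar\lambda(z)$ for all $x\in U$, so $\bar\lambda$ is constant on $U$. (There is no issue of definiteness: the do-nothing strategy gives $J(x,\cdot)\ge\mu(f)$ by Assumption~\ref{ass:speed_ergodic}, while $c<0$ gives $J(x,\cdot)\le\|f\|$, so $\bar\lambda$ takes values in $[\mu(f),\|f\|]$.)

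The one point requiring care is the bookkeeping of the prepended impulse within the canonical-space construction of the controlled process: identifying, on $[0,\infty)$, the trajectory and the impulse costs of $V'$ started at $x$ with those of $V$ started at $z$, and treating the degenerate case where $V$ already impulses at time $0$ (so $V'$ has a double impulse there). In every case the functional changes only by an additive bounded constant, which is irrelevant for the long-run average, so no genuine difficulty arises.
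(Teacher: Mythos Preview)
Your proof is correct and follows essentially the same approach as the paper: prepend an immediate impulse to $z$ (or to $x\in U$) at time $0$, observe that the resulting functional differs only by the additive constant $c(x,z)$ (respectively $c(z,x)$), and conclude since this constant vanishes under $\liminf_{T\to\infty}\frac{1}{T}(\cdot)$. The paper's proof is a one-line version of exactly this argument.
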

\begin{proof}
Take $x,y \in U$. Then $J(x, V) \ge \lim_{T \to \infty} \frac{c(x, y)}{T} + J(y, V) = J(y, V)$. By symmetry we obtain the equality. Similarly, for any $x \in E$ we have $J(x, V) \ge J(z, V)$.
\end{proof}

The following assumption will play a key role in establishing that $\lim_{N \to \infty} \bar{\lambda}^{N,\delta}(x) = 0$. A sufficient condition is discussed in Remark \ref{rem:E1}.
\begin{assumption}
\item[(C)]\label{ergodicstopping}
\( \displaystyle
\sup_{x\in U} \sup_\tau \frac{\ee^x \big\{ \int_0^{\tau} \ind{B^c_{z,N}}(X_s) ds\big\}}{\ee^x \{\tau\}} \to 0 \qquad \text{as $N\to \infty$.}
\)
\end{assumption}
Define
\begin{equation}
\tl f_N(x)=f(x)(1-\rho(x,B_{z,N}))^+ + (\|f\|+1)(1-\rho(x,B_{z,N+1}^c))^+.
\end{equation}

\begin{lemma} We have $\tl f_N(x)\geq f_N(x)$ for $x\in E$, $\|\tl f_N\|=\|f\|+1$ and the set $\{x: \tl f_N(x)\leq \mu(\tl f_N)\}$ is contained in $B_{z,N+1}$.
\end{lemma}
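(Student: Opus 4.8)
This is the exact analogue, for $\tl f_N$, of the preceding lemma for $f_N$, so the plan is to follow that proof. The key structural fact, already used implicitly there, is that the two cut-off functions $g_1(x):=(1-\rho(x,B_{z,N}))^+$ and $g_2(x):=(1-\rho(x,B^c_{z,N+1}))^+$ are non-negative and satisfy $g_1+g_2\equiv 1$ on $E$, with $g_1\equiv 1$ (hence $g_2\equiv 0$) on $B_{z,N}$ and $g_2\equiv 1$ (hence $g_1\equiv 0$) on $B^c_{z,N+1}$; parametrising by $r=\rho(x,z)$ one has $\rho(x,B_{z,N})=(r-N)^+$ and $\rho(x,B^c_{z,N+1})=(N+1-r)^+$, from which this is immediate. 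Hence $\tl f_N(x)=f(x)g_1(x)+(\|f\|+1)g_2(x)$ is, at each $x$, a convex combination of $f(x)$ and $\|f\|+1$, and the first claim follows at once: $\tl f_N(x)-f_N(x)=\big((\|f\|+1)-\eta\big)g_2(x)\ge 0$ because $\eta<\|f\|<\|f\|+1$ and $g_2\ge 0$.

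For the norm, the convex-combination structure together with $0\le f\le\|f\|<\|f\|+1$ gives $f(x)\le\tl f_N(x)\le\|f\|+1$ for every $x$, so $\|\tl f_N\|\le\|f\|+1$, with equality since $\tl f_N\equiv\|f\|+1$ on the non-empty set $B^c_{z,N+1}$. For the last claim I would observe that $\tl f_N=\|f\|+1-g_1\,(\|f\|+1-f)$, so $\tl f_N(x)=\|f\|+1$ exactly when $g_1(x)=0$, i.e.\ when $x\notin B_{z,N+1}$; thus it suffices to prove $\mu(\tl f_N)<\|f\|+1$. Since $\tl f_N=f\le\|f\|$ on $B_{z,N}$ and $\tl f_N\le\|f\|+1$ off it,
\[
\mu(\tl f_N)\ \le\ \|f\|\,\mu(B_{z,N})+(\|f\|+1)\big(1-\mu(B_{z,N})\big)\ =\ \|f\|+1-\mu(B_{z,N}),
\]
which is strictly less than $\|f\|+1$ as soon as $\mu(B_{z,N})>0$.

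The only point requiring a word of care --- and the closest thing here to a genuine obstacle --- is that positivity $\mu(B_{z,N})>0$. It holds for all sufficiently large $N$, because $B_{z,N}\uparrow E$, $\mu$ is a probability measure, and under Assumption \ref{ass:speed_ergodic} $\mu$ charges a compact set, which lies in $B_{z,N}$ for large $N$; this is the regime in which the lemma is used, exactly as for the preceding one. Similarly, the non-emptiness of $B^c_{z,N+1}$ invoked above is automatic in the unbounded (non-uniformly ergodic) setting that the paper treats. Beyond these remarks the argument is a routine convex-combination estimate, so no real difficulty appears.
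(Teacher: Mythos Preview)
Your argument is correct and follows the same route as the paper: the third claim reduces to showing $\mu(\tl f_N)<\|f\|+1$ via the estimate $\mu(\tl f_N)\le\|f\|\mu(B_{z,N})+(\|f\|+1)\mu(B_{z,N}^c)$, and then using that $\tl f_N\equiv\|f\|+1$ outside $B_{z,N+1}$. Your explicit convex-combination framework $g_1+g_2\equiv 1$ is a nice way to organise the computation (the paper leaves the first two claims as ``obvious''), and your remark that $\mu(B_{z,N})>0$ is needed for the strict inequality is exactly the implicit assumption in the paper's proof as well.
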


 \begin{proof}
The first claim is obvious. Since $\mu(\tl f_N)\leq \mu(B_{z,N})\|f\| + \mu(B_{z,N}^c)(\|f\|+1)=\|f\| + (1-\mu(B_{z,N})) < \|f\| + 1$ we have that
\[
\left\{x:\tl  f_N(x)\leq \mu(\tl f_N)\right\} \subset \left\{x: \tl f_N(x) < \|f\| + 1\right\}\subset B_{z,N+1}.
\]
\end{proof}

\begin{lemma}\label{valuelambda}
Under assumptions \ref{ass:weak_feller}, \ref{ass:speed_ergodic} and \ref{ass:positrec} we have
\begin{equation}\label{value1-}
\mu(f - f_N) \le \bar{\lambda}^{N,\delta}(x) \le \tl \lambda^{N,\delta},
\end{equation}
where either
\begin{equation}\label{value1}
\tilde{\lambda}^{N,\delta}=\sup_{x\in U} \sup_\tau {\ee^x\left\{\int_0^\tau (\tilde f_N(X_s)-f_N(X_s))ds - \delta\right\} \over \ee^x\left\{\tau\right\}},
\end{equation}
or
\begin{equation}\label{value2}
\tilde{\lambda}^{N,\delta}=\mu(\tl f_N-f_N).
\end{equation}
Moreover,
\begin{equation}\label{value2+}
\mu(f - f_N) \le \liminf_{\alpha \to 0} \alpha v_\alpha^{N, \delta}(z) \le \limsup_{\alpha \to 0} \alpha v_\alpha^{N, \delta}(z) \le \tilde{\lambda}^{N,\delta}.
\end{equation}
If, in addition, we assume \ref{ergodicstopping} then $\lim_{N \to \infty} |\bar{\lambda}^{N,\delta} (x)|= 0$ uniformly in $x \in E$ and $\lim_{N \to \infty} \tl{\lambda}^{N,\delta}= 0$ for any $\delta > 0$.
\end{lemma}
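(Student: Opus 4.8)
The plan is to establish \eqref{value1-}, \eqref{value2+} and the two alternative formulas for $\tl\lambda^{N,\delta}$ first, and only then deduce the limit statement from assumption \ref{ergodicstopping}. For the lower bounds in \eqref{value1-} and \eqref{value2+}, I would test the suprema defining $\bar\lambda^{N,\delta}$ and $v^{N,\delta}_\alpha$ with the trivial ``do nothing'' strategy: then the impulse sum disappears and one is left with $\liminf_{T\to\infty}\frac1T\ee^x\{\int_0^T (f-f_N)(X_s)ds\}$, respectively $\alpha\ee^x\{\int_0^\infty e^{-\alpha s}(f-f_N)(X_s)ds\}$, and since $f-f_N\ge 0$ is continuous and bounded, the ergodic / Tauberian convergence (as in Lemma \ref{lem:from_above} and its resolvent computation) gives the limit $\mu(f-f_N)$. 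For the upper bounds, note that $f - f_N \le \tl f_N - f_N$ pointwise, so $J^{N,\delta}(x,V)\le J^{N',\delta'}(x,V)$ with running cost $\tl f_N - f_N$; the running cost $\tl f_N - f_N$ is nonnegative, continuous, bounded, vanishes on $B_{z,N}$, and by the preceding lemma the set where $\tl f_N\le\mu(\tl f_N)$ sits inside the compact $B_{z,N+1}$. Hence $q$ for $\tl f_N - f_N$ is bounded below by Lemma \ref{lem:bounded from below}, and $\tl f_N - f_N$ is eventually $\le$ any $\lambda'>\mu(\tl f_N - f_N)$ outside a compact set, so Lemma \ref{lem:optim_strat} applies: the associated $w$ is bounded from above, and Theorem \ref{optimalityoflambda} (together with Theorem \ref{thm:bellman}, noting the extra cost $-\delta<0$ keeps all the cost-function assumptions \ref{ass:cost_function}--\ref{ass:unifintc_b} valid) identifies the common constant value. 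This value is given by \eqref{value1} when $\limsup_\alpha \alpha v_\alpha(z)>\mu(\tl f_N-f_N)$ via Proposition \ref{propseparated} applied to the separated cost $d\equiv 0$, $e\equiv -\delta$, and by \eqref{value2} (the ``do nothing'' value $\mu(\tl f_N-f_N)$) otherwise, via Theorem \ref{donothing}. The inequality chain \eqref{value2+} follows the same way, using that $\alpha v^{N,\delta}_\alpha(z)$ is squeezed between the ``do nothing'' discounted value and the discounted value for the larger cost $\tl f_N - f_N$, and then Lemma \ref{lem:from_above} / Theorem \ref{thm:bellman}.

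It remains to send $N\to\infty$. On the lower side, $\mu(f-f_N)\to 0$ was already shown in the earlier lemma (indeed $0\le \mu(f-f_N)\le 2\|f\|\mu(B^c_{z,N})\to 0$), so it suffices to show $\tl\lambda^{N,\delta}\to 0$, and for this I would use whichever of the two formulas \eqref{value1}, \eqref{value2} is in force. If \eqref{value2} holds, then $\tl\lambda^{N,\delta}=\mu(\tl f_N - f_N)\le 2(\|f\|+1)\mu(B^c_{z,N})\to 0$ (and in fact $\tl\lambda^{N,\delta}\le 0$ may fail only through the $-\delta$ term — more precisely one should note $\mu(\tl f_N - f_N)\to 0$ bounds it, and since $\bar\lambda^{N,\delta}\ge\mu(f-f_N)\ge 0$ the absolute value is controlled). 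If \eqref{value1} holds, then in the numerator $\int_0^\tau(\tl f_N - f_N)(X_s)ds \le (\|f\|+1)\int_0^\tau \ind{B^c_{z,N}}(X_s)ds$ because $\tl f_N - f_N$ is supported in $B^c_{z,N}$ and bounded by $\|f\|+1$; dividing by $\ee^x\{\tau\}$ and taking $\sup_{x\in U}\sup_\tau$ gives $\tl\lambda^{N,\delta}\le (\|f\|+1)\sup_{x\in U}\sup_\tau \frac{\ee^x\{\int_0^\tau \ind{B^c_{z,N}}(X_s)ds\}}{\ee^x\{\tau\}} - \delta\cdot(\text{something})\le (\|f\|+1)\sup_{x\in U}\sup_\tau \frac{\ee^x\{\int_0^\tau \ind{B^c_{z,N}}(X_s)ds\}}{\ee^x\{\tau\}}$, which tends to $0$ by assumption \ref{ergodicstopping}. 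In both cases $0\le \tl\lambda^{N,\delta}\to 0$ (dropping the harmless $-\delta$ only makes the bound larger), hence by the squeeze \eqref{value1-} we get $\bar\lambda^{N,\delta}(x)\to 0$ uniformly in $x\in E$ since neither bound depends on $x$.

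The main obstacle I anticipate is the bookkeeping around which of the two formulas \eqref{value1}/\eqref{value2} applies and verifying cleanly that all hypotheses of Theorem \ref{thm:bellman}, Theorem \ref{optimalityoflambda}, Proposition \ref{propseparated} and Lemma \ref{lem:optim_strat} transfer to the auxiliary problem with running cost $\tl f_N - f_N$ and impulse cost shifted by $-\delta$ — in particular that \ref{ass:boundedpotential} holds for $\tl f_N - f_N$ (this is exactly Lemma \ref{lem:bounded from below} since the sublevel set is compact) and that the separated-cost structure with $d\equiv 0$ is genuinely covered by Proposition \ref{propseparated}. The ergodic convergence used for the ``do nothing'' lower bounds is routine given Lemma \ref{lem:from_above} and the resolvent identity there, so I would not belabour it.
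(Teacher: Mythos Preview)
Your proposal is correct and follows essentially the same route as the paper: dominate $f-f_N$ by $\tl f_N-f_N$, check via Lemma~\ref{lem:bounded from below} that the auxiliary problem with running cost $\tl f_N-f_N$ and constant impulse cost $-\delta$ falls under Section~\ref{sec:impulse_control}, then invoke Proposition~\ref{propseparated} or Theorem~\ref{donothing} according to whether $\limsup_{\alpha}\alpha v^N_\alpha(z)>\mu(\tl f_N-f_N)$, and finish the limit under \ref{ergodicstopping} exactly as you describe. One small slip to fix: Lemma~\ref{lem:bounded from below} needs compactness of $\{x:\tl f_N(x)-f_N(x)\le\mu(\tl f_N-f_N)\}$, not of $\{x:\tl f_N(x)\le\mu(\tl f_N)\}$ --- the paper checks the former directly, and it is immediate since $\tl f_N-f_N=(\|f\|+1-\eta)(1-\rho(x,B_{z,N+1}^c))^+$ equals $\|f\|+1-\eta>\mu(\tl f_N-f_N)$ outside $B_{z,N+1}$; also, your appeal to Lemma~\ref{lem:optim_strat} for bounded $w$ is harmless but unnecessary here, as the separated-cost clause of Theorem~\ref{optimalityoflambda}(3) already suffices.
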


\begin{proof}
Notice first that the set $\{x: \tl f_N(x)-f_N(x)\leq \mu(\tl f_N-f_N)\}$ is compact for a sufficiently large $N$. Indeed,
\(
\tl f_N(x)-f_N(x) = (\|f\|+1 - \eta)(1-\rho(x,B_{z,N+1}^c))^+
\)
and $\mu(\tl f_N - f_N) < \|f\|+1 - \eta$ because $\mu(B_{z,N} > 0$ for $N$ large enough.

Lemma \ref{lem:bounded from below} implies that assumption \ref{ass:boundedpotential} is satisfied with $f$ replaced by $\tl f_N-f_N$. Let $v_\alpha^N(x)$ be the optimal value of the discounted functional
\[
\ee^x \Big\{ \int_0^\infty e^{-\alpha s} \big(\tl f_N(X_s)-f_N(X_s)\big) ds - \sum_{i = 1}^\infty e^{-\alpha \tau_i} \delta \Big\}
\]
and
\[
\tl\lambda^{N, \delta} (x) = \sup_V \liminf_{T \to \infty} \frac1T \ee^x \Big\{ \int_0^T \big(\tl f_N(X_s)-f_N(X_s)\big) ds - \delta \sum_{i = 1}^\infty \ind{\tau_i \le T} \Big\}.
\]
If $\limsup_{\alpha \to 0} \alpha v^N_\alpha(z)>\mu(\tl f_N-f_N)$ then by Proposition \ref{propseparated}, $\tilde{\lambda}^{N,\delta}$ is constant and of the form \eqref{value1}. In the case when $\limsup_{\alpha \to 0} \alpha v^N_\alpha(z)=\mu(\tl f_N-f_N)$, Theorem \ref{donothing} yields \eqref{value2}. Inequalities \eqref{value1-} now follow easily. Since $v_\alpha^{N, \delta}(z) \le v^N_\alpha (z)$ and $\limsup_{\alpha \to 0} \alpha v^N_\alpha(z) = \tl\lambda^{N, \delta}$, the right-hand inequality in \eqref{value2+} holds. Lemma \ref{lem:from_above} yields the left-hand inequality.

We have that either $\tilde{\lambda}^{N,\delta}=\mu(\tl f_N-f_N)\to 0$ as $N\to \infty$ or, from \eqref{value1},
\[
\tilde{\lambda}^{N,\delta}
\leq \sup_{x\in U}\sup_\tau \frac{(2\|f\| + 1)\ee^x \Big\{ \int_0^{\tau} \ind{B^c_{z,N}}(X_s) ds\Big\}}{\ee^x \{\tau\}} \to 0 \qquad \text{as $N\to \infty$}
\]
by assumption \ref{ergodicstopping}. Easily, $\lim_{N \to \infty} \mu(f - f_N) = 0$.
\end{proof}

\begin{remark}\label{rem:E1}
Notice that assumption  \ref{ergodicstopping} is satisfied when $(X_t)$ is a continuous process and
\begin{equation}
\lim_{N \to \infty} \frac{\sup_{x\in \partial B_{z,N}^c} t(x)}{\inf_{x\in U} \ee^x\{D_{B_{z,N}^c}\}} = 0,
\end{equation}
which means that the process returns to the set $U$ from the boundary of $B_{z,N}^c$ much faster that enters this set starting from $U$. Such property is typical for ergodic processes and is usually attained by a suitable Lyapunov condition. 
\end{remark}

\begin{prop}\label{prop:f_N}
Assume \ref{ass:weak_feller}, \ref{ass:speed_ergodic}, \ref{ass:cost_function}-\ref{ass:positrec}, \ref{ergodicstopping} and $\bar\lambda(z) > \mu(f)$. There is $\eta \in (\mu(f), \|f\|)$ to be used in the definition of $f_N$ such that $\{ x \in E:\ f_N(x) \le \mu(f_N) \}$ is compact and $\limsup_{\|x\| \to \infty} f_N(x) \allowbreak < \bar \lambda^{N,c}(z)$ for a sufficiently large $N$. Furthermore, $\bar\lambda^{N,c}$ is constant and the strategy given in Theorem \ref{thm:bellman} is optimal.
\end{prop}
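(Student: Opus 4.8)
The plan is to anchor $f_N$ at a level $\eta$ strictly between $\mu(f)$ and $\bar\lambda(z)$. Since $c<0$ we have $\bar\lambda(z)\le\|f\|$, and $f$ non-constant forces $\mu(f)<\|f\|$, so $\eta:=\tfrac12\bigl(\mu(f)+\bar\lambda(z)\bigr)$ lies in $(\mu(f),\|f\|)$ and is a legitimate choice in the definition of $f_N$; fix it. For $N$ large the set $\{x:f_N(x)\le\mu(f_N)\}$ is contained in the ball $B_{z,N+1}$, and being closed by continuity of $f_N$ it is compact; Lemma~\ref{lem:bounded from below} then yields that \ref{ass:boundedpotential} holds with $f$ replaced by $f_N$ (the zero-potential of $f_N-\mu(f_N)$ is bounded from below). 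Since $f_N\equiv\eta$ outside $B_{z,N+1}$, also $\liminf_{\|x\|\to\infty}f_N(x)=\limsup_{\|x\|\to\infty}f_N(x)=\eta$.

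The key step is to show $\bar\lambda^{N,c}(z)>\eta$ for $N$ large. Choose an admissible strategy $V_0$ with $J(z,V_0)>\eta$, possible because $\sup_VJ(z,V)=\bar\lambda(z)>\eta$, and let $(X_s)$ be the process controlled by $V_0$ started at $z\in U$, with impulse times $(\tau_i)$. On each inter-impulse interval $(X_s)$ evolves as a fresh copy of the uncontrolled process issued from a point of $U$, so chopping $[0,T]$ into the pieces $\sigma_i:=(\tau_{i+1}\wedge T)-(\tau_i\wedge T)$ (stopping times of the respective copies) and using the strong Markov property together with assumption~\ref{ergodicstopping} we get
\[
\ee^z\Bigl\{\int_0^T|f_N-f|(X_s)\,ds\Bigr\}\le\|f\|\,\ee^z\Bigl\{\int_0^T\ind{B^c_{z,N}}(X_s)\,ds\Bigr\}\le\|f\|\,\ve_N\sum_{i\ge0}\ee^z\{\sigma_i\}=\|f\|\,\ve_N\,T,
\]
where $\ve_N:=\sup_{x\in U}\sup_\tau\ee^x\{\int_0^\tau\ind{B^c_{z,N}}(X_s)ds\}/\ee^x\{\tau\}\to0$, we used $|f_N-f|\le\|f\|\,\ind{B^c_{z,N}}$, and the last equality follows from $\sum_i\sigma_i=T$ (since $\tau_i\uparrow\infty$). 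Writing $J^{N,c}(z,V_0)$ as the $\liminf_T$ of the $V_0$-averaged $f$-functional plus $\frac1T\ee^z\{\int_0^T(f_N-f)(X_s)ds\}$, whose absolute value is at most $\|f\|\ve_N$, and using $\liminf(a_T+b_T)\ge\liminf a_T+\liminf b_T$, we obtain
\[
\bar\lambda^{N,c}(z)\ge J^{N,c}(z,V_0)\ge J(z,V_0)-\|f\|\ve_N>\eta
\]
for all $N$ large enough. In particular $\limsup_{\|x\|\to\infty}f_N(x)=\eta<\bar\lambda^{N,c}(z)$.

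It remains to run the machinery of Section~\ref{sec:impulse_control} for $f_N$. The Tauberian Lemma~\ref{lem:tauber} applied to the $(f_N,c)$ problem gives $\bar\lambda^{N,c}(z)\le\limsup_{\alpha\to0}\alpha v^{N,c}_\alpha(z)=:\lambda_N$, so with the previous paragraph and $\mu(f_N)\to\mu(f)<\eta$ we have $\lambda_N\ge\bar\lambda^{N,c}(z)>\eta>\mu(f_N)$ for $N$ large. Hence all hypotheses of Theorem~\ref{thm:bellman} are met by $f_N$: the assumptions on $(X_t)$ and on $c$ are unchanged, \ref{ass:boundedpotential} was verified above, and $\lambda_N>\mu(f_N)$. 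Since moreover $\mu(f_N)<\liminf_{\|x\|\to\infty}f_N(x)=\eta$ and $\limsup_{\|x\|\to\infty}f_N(x)=\eta<\lambda_N$, Lemma~\ref{lem:optim_strat} shows the value function of the $(f_N,c)$ problem is bounded from above. Therefore the third assertion of Theorem~\ref{optimalityoflambda} (the case of a value function bounded from above) applies: the strategy $V^*$ furnished by Theorem~\ref{thm:bellman} for $f_N$ satisfies $J^{N,c}(x,V^*)=\lambda_N$ for every $x$, while the second assertion of that theorem gives $J^{N,c}(x,V)\le\lambda_N$ for every admissible $V$; consequently $\bar\lambda^{N,c}(x)=\lambda_N$ for all $x$, so $\bar\lambda^{N,c}$ is constant and $V^*$ is optimal.

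The step I expect to be the main obstacle is the excursion estimate above: assumption~\ref{ergodicstopping} is formulated for the uncontrolled process, and one has to upgrade it to a bound, uniform in the horizon $T$ and over all admissible strategies, on the fraction of time the controlled process spends in $B^c_{z,N}$. Once that is in hand, the rest is bookkeeping built on the results of Section~\ref{sec:impulse_control}.
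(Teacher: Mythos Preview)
Your argument is correct but follows a genuinely different route from the paper's. The paper never compares $J$ and $J^{N,c}$ directly; instead it works entirely at the level of discounted value functions. It writes
\[
\limsup_{\alpha\to0}\alpha\bigl(v_\alpha(z)-v^{N,c}_\alpha(z)\bigr)
\le
\limsup_{\alpha\to0}\alpha\bigl(v_\alpha(z)-v^{N,c+\delta}_\alpha(z)\bigr)
+\limsup_{\alpha\to0}\alpha\bigl(v^{N,c+\delta}_\alpha(z)-v^{N,c}_\alpha(z)\bigr),
\]
bounds the first piece by $v^{N,\delta}_\alpha(z)$ and then by $\tilde\lambda^{N,\delta}$ via Lemma~\ref{valuelambda}, and bounds the second piece by $\delta\cdot 2\|f\|/(-c-\delta)$ through an elementary count of impulses. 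The whole point of the auxiliary cost $\delta>0$ in the paper is to make $v^{N,\delta}_\alpha$ a bona fide impulse problem with separated cost, so that Proposition~\ref{propseparated} packages the optimum as a \emph{single} ratio $\sup_{x\in U}\sup_\tau \ee^x\{\ldots\}/\ee^x\{\tau\}$ to which assumption~\ref{ergodicstopping} applies verbatim. Only then is $\eta$ chosen, namely any $\eta\in(\mu(f),\bar\lambda(z)-\ve)$.

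Your approach bypasses the $\delta$-trick and the vanishing-discount comparison entirely: you fix $\eta$ up front, pick one near-optimal strategy $V_0$ for $J$, and bound $J(z,V_0)-J^{N,c}(z,V_0)$ by decomposing the controlled trajectory into inter-impulse excursions, applying \ref{ergodicstopping} on each piece and summing. This is more elementary and makes transparent why \ref{ergodicstopping} suffices. The price is exactly the conditioning step you flag: one must argue that, conditionally on $\mathcal F_{\tau_i}$, the truncated excursion length $\sigma_i$ is (or can be treated as) a stopping time of the $(i{+}1)$-th uncontrolled copy started at $\xi_i\in U$, so that the linear form $\ee^x\{\int_0^\tau\ind{B^c_{z,N}}(X_s)ds\}\le\ve_N\,\ee^x\{\tau\}$ of \ref{ergodicstopping} applies. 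In the canonical product construction this holds (after conditioning on $F^i_{\tau_i}$ the residual dependence on the past is through constants such as $T-\tau_i$), so your sketch goes through; the paper's route simply avoids having to say any of this. A minor point: your bound $|f_N-f|\le\|f\|\ind{B^c_{z,N}}$ need not hold with constant $\|f\|$ in a general metric space (on the annulus the two bump factors need not sum to $1$), but any fixed constant multiple of $\|f\|$ works and the conclusion is unaffected. After the inequality $\bar\lambda^{N,c}(z)>\eta$, both proofs finish identically via Lemma~\ref{lem:bounded from below}, Lemma~\ref{lem:optim_strat} and Theorem~\ref{optimalityoflambda}.
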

\begin{proof}
By Tauberian theorem, Lemma \ref{lem:tauber}, $\limsup_{\alpha \to 0} \alpha v_\alpha(z) \ge \bar\lambda(z) > \mu(f)$. We will show that for any $\ve > 0$ we have $\liminf_{\alpha \to 0} \alpha v_\alpha^{N,c}(z) \ge \bar\lambda(z) - \ve$ for a sufficiently large $N$ and any $\eta \in (\mu(f), \|f\|)$. For any $\delta>0$ such that $c+\delta<0$ (recall that the constant $c < 0$ is an upper bound on the cost function $c$) we have:
\[
\limsup_{\alpha \to 0} \alpha \big(v_\alpha(z) - v_\alpha^{N,c}(z) \big)
\le
\limsup_{\alpha \to 0} \alpha \big(v_\alpha(z) - v_\alpha^{N,c+\delta}(z) \big) + \limsup_{\alpha \to 0} \alpha \big(v_\alpha^{N, c+\delta}(z) - v_\alpha^{N,c}(z) \big).
\]
Since
\(
v_\alpha(z) - v_\alpha^{N,c+\delta}(z) \le v^{N, \delta}_\alpha (z)
\),
Lemma \ref{valuelambda} implies that $\limsup_{\alpha \to 0} \alpha \big(v_\alpha(z) - v_\alpha^{N,c+\delta}(z)\big) \le \tl\lambda^{N, \delta}$. To bound the second term we study an estimate for $\ve$-optimal strategies for $v^{N, c+\delta}_\alpha (z)$. Fix such a strategy. Then
\begin{align*}
v^{N, c+\delta}_\alpha(z) - \ve
&\le \ee^x \Big\{ \int_0^\infty e^{-\alpha s} f_N(X_s) ds + \sum_{i=1}^\infty \ind{\tau_i < \infty} e^{-\alpha \tau_i} \big(c(X_{\tau_i-}, \xi_i) + \delta\big) \Big\}\\
&\le \frac{\|f_N\|}{\alpha} + (c + \delta) \ee^x \Big\{ \sum_{i=1}^\infty e^{-\alpha \tau_i} \Big\},
\end{align*}
and
for $0\leq \delta<-c$
\begin{equation}\label{eqn:b1}
\ee^x \Big\{ \sum_{i=1}^\infty e^{-\alpha \tau_i} \Big\} \le \frac{\frac{\|f_N\|}{\alpha} - v_\alpha^{N, c+\delta}(z) + \ve}{-c - \delta}\le \frac{\frac{\|f\|}{\alpha} - v_\alpha^{N, c+\delta}(z) + \ve}{-c - \delta}.
\end{equation}
Hence, in the definition of $v^{N, c+\delta}_\alpha$  strategies can be assumed to satisfy the above bound. This allows us to compute the bound
\[
\alpha\big(v_\alpha^{N, c+\delta}(z) - v_\alpha^{N, c}(z)\big) \le \alpha \sup_V \ee^x \Big\{ \delta \sum_{i=1}^\infty e^{-\alpha \tau_i} \Big\} \le \delta \frac{\|f\| - \alpha v_\alpha^{N, c+\delta}(z) + \alpha \ve}{-c - \delta},
\]
where supremum is over strategies satisfying \eqref{eqn:b1}. Using the lower bound $v_\alpha^{N, c+\delta} (z) \ge \frac{-\|f\|}{\alpha}$ we obtain
\(
\limsup_{\alpha \to 0} \alpha\big(v_\alpha^{N, c+\delta}(z) - v_\alpha^{N, c}(z)\big) \le \delta \frac{2\|f\|}{-c-\delta}.
\)
Hence,
\[
\limsup_{\alpha \to 0} \alpha \big(v_\alpha(z) - v_\alpha^{N,c}(z) \big) \le \tl\lambda^{N, \delta} + \delta \frac{2\|f\|}{-c-\delta},
\]
which gives
\[
\liminf_{\alpha \to 0} \alpha v_\alpha^{N,c}(z) \ge \limsup_{\alpha \to 0} \alpha v_\alpha(z) - \tl\lambda^{N, \delta} - \delta \frac{2\|f\|}{-c-\delta} \ge \bar\lambda(z) - \tl\lambda^{N, \delta} - \delta \frac{2\|f\|}{-c-\delta}.
\]
The last term can be made arbitrarily small by the choice of $\delta$ sufficiently close to $0$. By Lemma \ref{valuelambda}, $\tl\lambda^{N, \delta}$ can be made arbitrarily close to $0$ for $N$ sufficiently large. 
Take $\ve < \bar\lambda(z) - \mu(f)$ and choose $N^*, \delta^*$ such that $\tl{\lambda}^{N,\delta} < \ve/2$ for all $N \ge N^*$ and $\delta \frac{2\|f\|}{-c-\delta} < \ve/2$ for $\delta < \delta^*$ (notice that the choice of $N^*, \delta^*$ holds uniformly in $\eta \in (\mu(f), \|f\|)$). Now, choose $\eta \in (\mu(f), \bar\lambda(z) - \ve)$. Recalling $\lim_{N \to \infty} \mu(f_N) = \mu(f)$, we obtain that $\mu(f_N) < \eta$ for sufficiently large $N$. Hence $\{x \in E: f_N(x) \le \mu(f_N)\} \subset B_{z, N+1}$ for large enough $N$, so it is compact, and
\[
\limsup_{N \to \infty} f_N(x) = \eta   < \bar\lambda(z) - \ve \le
\liminf_{\alpha \to 0} \alpha v_\alpha^{N,c}(z) \le \limsup_{\alpha \to 0} \alpha v_\alpha^{N,c}(z).
\]
Let  $\lambda = \limsup_{\alpha \to 0}\alpha v_\alpha^{N,c}(z)$. Clearly $\lambda > \mu(f_N)$. Lemma \ref{lem:optim_strat}, Theorem \ref{optimalityoflambda} and Theorem \ref{thm:bellman} imply that the strategy derived from the Bellman equation in Theorem \ref{thm:bellman} is optimal for $J^{N,c}$ and $\bar\lambda^{N,c}(x) = \lambda$ for all $x$.
\end{proof}
\begin{cor}
By the same arguments as at the end of the proof of Proposition \ref{prop:f_N}, the value function $\bar \lambda^{N, c+\delta}$ is constant on $E$ and the strategy from Theorem \ref{thm:bellman} is optimal.
\end{cor}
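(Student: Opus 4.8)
The plan is to re-run the closing argument of the proof of Proposition~\ref{prop:f_N} verbatim, now with the running reward $f_N$ and the impulse cost $c+\delta$ in the roles previously played by $f_N$ and $c$. First I would check that for $\delta\in(0,-c)$ the shifted cost $c+\delta$ still satisfies \ref{ass:cost_function}: the bound $c(\cdot,\cdot)+\delta<0$ is immediate, while \eqref{triangle} for $c+\delta$ amounts to $c(x,z)-c(x,z')-c(z',z)\ge\delta$ for all $x\in E$, $z,z'\in U$, i.e.\ to $\delta$ not exceeding the ``triangle slack'' of $c$ (which for a separated cost $c(x,\xi)=d(x)+e(\xi)$ equals $-\sup_{z'\in U}c(z',z')\ge -c$, so every admissible $\delta$ works). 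The other hypotheses of Section~\ref{sec:impulse_control} are untouched: \ref{ass:positrec} does not involve the cost, and \ref{ass:unifintcalpha}, \ref{ass:unifintc_b} see only $\underline c(\cdot)+\delta$, i.e.\ a constant shift of the same uniformly integrable families. Since $f_N$ is unchanged, so are the two facts about it used in Proposition~\ref{prop:f_N}: $\{x:f_N(x)\le\mu(f_N)\}\subset B_{z,N+1}$ is compact and $f_N\equiv\eta$ off $B_{z,N+1}$, whence $\limsup_{\|x\|\to\infty}f_N(x)=\eta$.

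Next I would transfer the lower bound on the discounted value, which is in fact simpler for $c+\delta$ than the corresponding step for $c$ in Proposition~\ref{prop:f_N}, since there is no need for the auxiliary increment $v_\alpha^{N,c+\delta}-v_\alpha^{N,c}$. From $v_\alpha(z)-v_\alpha^{N,c+\delta}(z)\le v_\alpha^{N,\delta}(z)$ and Lemma~\ref{valuelambda} one gets $\limsup_{\alpha\to0}\alpha\bigl(v_\alpha(z)-v_\alpha^{N,c+\delta}(z)\bigr)\le\tl\lambda^{N,\delta}$, and, combined with $\limsup_{\alpha\to0}\alpha v_\alpha(z)\ge\bar\lambda(z)$ (Tauberian theorem, Lemma~\ref{lem:tauber}), this gives $\liminf_{\alpha\to0}\alpha v_\alpha^{N,c+\delta}(z)\ge\bar\lambda(z)-\tl\lambda^{N,\delta}$. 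Taking $\ve<\bar\lambda(z)-\mu(f)$ and keeping the choice of $N,\delta,\eta$ made in Proposition~\ref{prop:f_N} (so $\tl\lambda^{N,\delta}<\ve/2$, $\mu(f_N)<\eta<\bar\lambda(z)-\ve$, and $f_N\equiv\eta$ off $B_{z,N+1}$), and setting $\lambda':=\limsup_{\alpha\to0}\alpha v_\alpha^{N,c+\delta}(z)$, I would conclude
\[
\limsup_{\|x\|\to\infty}f_N(x)=\eta<\bar\lambda(z)-\ve\le\lambda'\qquad\text{and}\qquad \mu(f_N)<\eta<\lambda'.
\]

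Finally I would feed the triple $(f_N,\,c+\delta,\,z)$ into the machinery of Section~\ref{sec:impulse_control}. Compactness of $\{x:f_N(x)\le\mu(f_N)\}$ and $\mu(f_N)<\liminf_{\|x\|\to\infty}f_N(x)=\eta$ give, through Lemma~\ref{lem:bounded from below}, assumption \ref{ass:boundedpotential} for $f_N$; then Lemma~\ref{lem:optim_strat} (applied with $f\mapsto f_N$, $c\mapsto c+\delta$, and the constant there taken to be $\lambda'$), together with $\limsup_{\|x\|\to\infty}f_N(x)=\eta<\lambda'$, yields that the associated value function $w$ is bounded from above. Since $\lambda'>\mu(f_N)$, Theorem~\ref{thm:bellman} produces a continuous $w$ solving the corresponding Bellman equation \eqref{eq22} and the strategy $(\tau_i^*,\xi_i^*)$, and Theorem~\ref{optimalityoflambda}(3)---now applicable through its ``$w$ bounded from above'' clause---shows that $(\tau_i^*,\xi_i^*)$ is optimal for $J^{N,c+\delta}$ and that $\bar\lambda^{N,c+\delta}(x)=\lambda'$ for all $x\in E$, i.e.\ $\bar\lambda^{N,c+\delta}$ is constant. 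I expect the only non-mechanical point to be the verification of \eqref{triangle} for $c+\delta$ in the first step; everything else is a literal substitution, as the estimates of Proposition~\ref{prop:f_N} invoke no property of $c$ beyond $c(\cdot,\cdot)\le c<0$ and the bound on $\ee^x\{N_T\}$, both insensitive to adding the constant $\delta$.
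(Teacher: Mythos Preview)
Your proposal is correct and is precisely what the paper intends: the corollary carries no separate proof in the paper beyond the phrase ``by the same arguments as at the end of the proof of Proposition~\ref{prop:f_N}'', and you have faithfully re-executed those arguments with the substitution $c\mapsto c+\delta$. Your observation that the only non-mechanical point is the triangle inequality \eqref{triangle} for $c+\delta$ is well taken---the paper does not comment on this at all---and your verification in the separated-cost case is accurate; in the general case the paper appears simply to take it for granted (or to rely implicitly on the fact that the only properties of the cost actually invoked in the cited chain of results are the strict negativity bound and the integrability conditions \ref{ass:unifintcalpha}, \ref{ass:unifintc_b}, both of which survive the shift).
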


\begin{theorem}\label{thm:bar_lambda}
Assume \ref{ass:weak_feller}, \ref{ass:speed_ergodic}, \ref{ass:cost_function}-\ref{ass:positrec}, \ref{ergodicstopping} and $\bar\lambda(z) > \mu(f)$ for some $z \in E$. Then the function $\bar \lambda$ is constant on $E$ and for any $\ve > 0$ there is $N$ such that an optimal strategy for $\bar \lambda^{N,c}$ is $\ve$-optimal for $\bar \lambda$.
\end{theorem}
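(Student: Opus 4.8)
The plan is to squeeze $\bar\lambda(x)$, for every $x$, between $\bar\lambda(z)$ and an $x$-independent quantity that can be made as close to $\bar\lambda(z)$ as we like by sending $N\to\infty$ and the truncation level $\delta\downarrow 0$; the strategy realising the truncated value will then be $\ve$-optimal for the original problem. I would fix $\eta\in(\mu(f),\|f\|)$ as produced by Proposition \ref{prop:f_N}, so that for all large $N$ the value function $\bar\lambda^{N,c}$ equals a constant $\lambda_N$, the strategy $V^{*,N}=(\tau^*_i,\xi^*_i)$ built in Theorem \ref{thm:bellman} from the Bellman equation for $(f_N,c)$ is optimal for $J^{N,c}$, and $w$ for that problem is bounded above (Lemma \ref{lem:optim_strat}), whence $J^{N,c}(x,V^{*,N})=\lambda_N$ for every $x$ by Theorem \ref{optimalityoflambda}. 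Two facts are already in hand: $\bar\lambda(x)\ge\bar\lambda(z)$ (the lemma preceding Assumption \ref{ergodicstopping}), and, from the proof of Proposition \ref{prop:f_N}, $\lambda_N\ge\bar\lambda(z)-\tl\lambda^{N,\delta}-\delta\,\frac{2\|f\|}{-c-\delta}$ for every $\delta\in(0,-c)$, with $\tl\lambda^{N,\delta}\to0$ as $N\to\infty$ (Lemma \ref{valuelambda}).

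Next I would prove the matching upper bound $\bar\lambda(x)\le\lambda_N+\tl\lambda^{N,\delta}+\delta\,\frac{2\|f\|}{-c-\delta}$. By the Tauberian Lemma \ref{lem:tauber}, used exactly as in the proof of Theorem \ref{optimalityoflambda}, any admissible $V$ satisfies $J(x,V)\le\liminf_{\alpha\to0}\alpha\,\ee^x\{\int_0^\infty e^{-\alpha s}f(X_s)ds+\sum_i\ind{\tau_i<\infty}e^{-\alpha\tau_i}c(X_{\tau_i-},\xi_i)\}$; writing $f=f_N+(f-f_N)$ and splitting each impulse cost as $\bigl(c(X_{\tau_i-},\xi_i)+\delta\bigr)-\delta$ decomposes the expectation into two pieces bounded by $v^{N,c+\delta}_\alpha(x)$ and $v^{N,\delta}_\alpha(x)$, so $\bar\lambda(x)\le\limsup_{\alpha\to0}\alpha v^{N,c+\delta}_\alpha(x)+\limsup_{\alpha\to0}\alpha v^{N,\delta}_\alpha(x)$. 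Since Lemma \ref{boundedness} bounds $v_\alpha(\cdot)-v_\alpha(z)$ by $\alpha$-independent functions in each truncated problem, $\alpha\bigl(v_\alpha(x)-v_\alpha(z)\bigr)\to0$; hence $\limsup_{\alpha\to0}\alpha v^{N,c+\delta}_\alpha(x)=\bar\lambda^{N,c+\delta}$ (a constant, by the corollary to Proposition \ref{prop:f_N} together with Theorem \ref{optimalityoflambda}) and $\limsup_{\alpha\to0}\alpha v^{N,\delta}_\alpha(x)\le\tl\lambda^{N,\delta}$ by \eqref{value2+}. Finally $\bar\lambda^{N,c+\delta}\le\lambda_N+\delta\,\frac{2\|f\|}{-c-\delta}$, from the bound on $\limsup_{\alpha\to0}\alpha\bigl(v^{N,c+\delta}_\alpha(z)-v^{N,c}_\alpha(z)\bigr)$ obtained inside the proof of Proposition \ref{prop:f_N}, which finishes the upper bound.

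The last input concerns $V^{*,N}$. Between impulses $(X^*_s)$ runs as the uncontrolled process, every impulse lands it in $U$, the mean time to the next impulse from a point of $U$ is bounded (Theorem \ref{thm:bellman}), and $\tau^*_i\to\infty$ a.s.\ (Theorem \ref{optimalityoflambda}); decomposing $\int_0^T\ind{B^c_{z,N}}(X^*_s)ds$ over the impulse cycles and applying Assumption \ref{ergodicstopping} to the stopping time $\tau^*_1$ of the uncontrolled process started at $\xi^*_i\in U$ should give, for every $x$, $\limsup_{T\to\infty}\frac1T\ee^x\{\int_0^T\ind{B^c_{z,N}}(X^*_s)ds\}\le\ve_N$, where $\ve_N:=\sup_{y\in U}\sup_\tau\ee^y\{\int_0^\tau\ind{B^c_{z,N}}(X_s)ds\}/\ee^y\{\tau\}\to0$. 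Since $0\le\tl f_N-f\le(\|f\|+1)\ind{B^c_{z,N}}$ and $f_N\le\tl f_N$, this yields both $J(x,V^{*,N})\ge\lambda_N-(\|f\|+1)\ve_N$ and $\lambda_N=J^{N,c}(z,V^{*,N})\le J(z,V^{*,N})+(\|f\|+1)\ve_N\le\bar\lambda(z)+(\|f\|+1)\ve_N$. Combining with the upper bound and $\bar\lambda(x)\ge\bar\lambda(z)$ gives $0\le\bar\lambda(x)-\bar\lambda(z)\le(\|f\|+1)\ve_N+\tl\lambda^{N,\delta}+\delta\,\frac{2\|f\|}{-c-\delta}$, whose right side vanishes on letting $N\to\infty$ then $\delta\downarrow0$, so $\bar\lambda\equiv\bar\lambda(z)$. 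For the last assertion, given $\ve>0$ I would pick $\delta$ with $\delta\,\frac{2\|f\|}{-c-\delta}<\ve/3$ and then $N$ with $\tl\lambda^{N,\delta}<\ve/3$ and $(\|f\|+1)\ve_N<\ve/3$; then $J(x,V^{*,N})\ge\lambda_N-(\|f\|+1)\ve_N\ge\bar\lambda(x)-\ve$ for all $x$, i.e.\ $V^{*,N}$ is $\ve$-optimal for $\bar\lambda$.

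The main obstacle is the controlled-process estimate of the previous paragraph. A crude cycle decomposition leaves a contribution from the impulse interval straddling time $T$, whose mean length is size-biased and not obviously $o(T)$ from $\sup_{y\in U}\ee^y\{\tau^*_1\}<\infty$ alone. I expect this to be dealt with either by a uniform-integrability argument for $\{\tau^*_1$ under $\prob^y:y\in U\}$, or, more cleanly, by recognising $(X^*_s)$ as a positive recurrent Harris process regenerating at the impulses (which lie in the compact set $U$), computing its invariant measure by the cycle formula as in Proposition \ref{propseparated}, and then invoking the ergodic theorem for Harris processes (as in the proof of Theorem \ref{optimalityoflambda}) together with bounded convergence.
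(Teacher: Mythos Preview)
Your Tauberian upper bound $\bar\lambda(x)\le\lambda_N+\tl\lambda^{N,\delta}+\delta\,\frac{2\|f\|}{-c-\delta}$ is correct and is essentially how the paper obtains the lower inequality in \eqref{fact1} (the paper just works at the undiscounted level, but the mechanism is the same). The difficulty, as you correctly isolate, is the reverse inequality $\lambda_N\le\bar\lambda(x)+\text{(small)}$, which you attack by a cycle decomposition of the controlled process under $V^{*,N}$. That route is genuinely problematic: the process controlled by $V^{*,N}$ is Markov but the impulse targets $\hat\xi(X_{\tau^*_i-})$ are not fixed, so the regenerative structure of Proposition \ref{propseparated} is unavailable, and verifying Harris recurrence together with the boundary-cycle control would require substantial new work not supported by the paper's assumptions.

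The paper bypasses this entirely via Lemma \ref{lem:strbound}: for any $\ve$-optimal strategy $V$ for $\bar\lambda^{N,c}$ (in particular for $V^{*,N}$) one has the elementary a~priori impulse-rate bound
\[
\limsup_{T\to\infty}\frac1T\,\ee^x\{N^V(0,T)\}\le\frac{2\|f\|+\ve}{-c},
\]
which follows just from $c(\cdot,\cdot)\le c<0$ and $\|f_N\|\le\|f\|$. With this in hand (Lemma \ref{approxofstra}), write
\[
J^{N,c}(x,V^{*,N})-J(x,V^{*,N})
\le \Big[J^{N,c+\delta}(x,V^{*,N})-J(x,V^{*,N})\Big]
\le \tl{\tl\lambda}^{N,\delta}+2\delta\cdot\frac{2\|f\|}{-c},
\]
where the $(f_N-f)$ integral with the artificial cost $-\delta$ per impulse is dominated by the auxiliary value $\tl{\tl\lambda}^{N,\delta}$ (handled exactly as in Lemma \ref{valuelambda}), and the residual $+2\delta$ per impulse is controlled by the rate bound. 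This single computation gives both $\lambda_N\le\bar\lambda(x)+\tl{\tl\lambda}^{N,\delta}+\delta\,\frac{2\|f\|}{-c}$ and $J(x,V^{*,N})\ge\lambda_N-\tl{\tl\lambda}^{N,\delta}-\delta\,\frac{2\|f\|}{-c}$, so constancy of $\bar\lambda$ and $\ve$-optimality of $V^{*,N}$ follow at once by choosing $\delta$ small and then $N$ large. No ergodic theorem for the controlled process, no size-biased cycle, no Harris machinery is needed; the impulse-rate bound replaces your cycle estimate and closes the argument cleanly.
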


The proof of the above theorem is split into several lemmas. Let $N^V(0,T)$ be the number of impulses under control $V$ in the time interval $[0,T]$.

\begin{lemma}\label{lem:strbound}
For the value function $\bar{\lambda}(x)$ (respectively, $\bar{\lambda}^{N,c}$), the strategies can be restricted to those satisfying
\begin{equation}\label{strbound}
\limsup_{T\to \infty} {1 \over T}\ee^x\left\{N^V(0,T)\right\}\leq \frac{\|f\|+ \ve}{-c} \qquad \Big(\le \frac{2\|f\| + \ve}{-c} \Big)
\end{equation}
for a fixed $\ve > 0$. For $\bar{\lambda}^{N,c+\delta}$ the bound changes to $\frac{2\|f\| + \ve}{-c-\delta}$ provided that $c+\delta<0$.
\end{lemma}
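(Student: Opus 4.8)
The plan is to exploit the crude quantitative trade-off between impulse frequency and attainable reward: every impulse costs at least $-c>0$, while the running reward rate never exceeds $\|f\|$, so a strategy impulsing at an asymptotic rate much larger than $\|f\|/(-c)$ must yield a \emph{negative} average payoff and is therefore beaten by doing nothing. First I would record the upper bound: for any admissible $V$, using $0\le f\le\|f\|$ and $c(X_{\tau_i-},\xi_i)\le c<0$,
\[
\frac1T\ee^x\Big\{\int_0^T f(X_s)\,ds+\sum_{i=1}^\infty\ind{\tau_i\le T}c(X_{\tau_i-},\xi_i)\Big\}\le \|f\|-(-c)\,\frac1T\ee^x\{N^V(0,T)\},
\]
and, writing $n_T:=\frac1T\ee^x\{N^V(0,T)\}\ge0$ and using $\liminf_{T\to\infty}\big(\|f\|-(-c)n_T\big)=\|f\|-(-c)\limsup_{T\to\infty}n_T$, taking $\liminf_{T\to\infty}$ gives $J(x,V)\le \|f\|-(-c)\limsup_{T\to\infty}\frac1T\ee^x\{N^V(0,T)\}$.

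Next I would produce a lower bound for the value from the \emph{do nothing} strategy $V_0$ (for which $N^{V_0}(0,T)\equiv0$): by Assumption \ref{ass:speed_ergodic}, $\big|\frac1T\int_0^T P_sf(x)\,ds-\mu(f)\big|\le \frac{\|f\|K(x)}{T}\int_0^\infty h(s)\,ds\to0$, so $J(x,V_0)=\mu(f)\ge0$ (recall $f\ge0$ here), whence $\bar\lambda(x)\ge0$. Combining the two estimates, if $V$ violates \eqref{strbound}, i.e. $\limsup_{T\to\infty}\frac1T\ee^x\{N^V(0,T)\}>\frac{\|f\|+\ve}{-c}$, then $J(x,V)<\|f\|-(-c)\frac{\|f\|+\ve}{-c}=-\ve<0\le\bar\lambda(x)$, so $V$ is strictly suboptimal; since $V_0$ itself satisfies \eqref{strbound}, restricting the supremum defining $\bar\lambda(x)$ to strategies obeying \eqref{strbound} does not change its value. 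The same three steps apply verbatim with $f$ replaced by $f_N$: $f_N$ is continuous with $0\le f_N\le\|f\|$, so $J^{N,c}(x,V_0)=\mu(f_N)\ge0$ and $\bar\lambda^{N,c}(x)\ge0$; a strategy with $\limsup_{T\to\infty}\frac1T\ee^x\{N^V(0,T)\}>\frac{2\|f\|+\ve}{-c}$ then gives $J^{N,c}(x,V)<\|f\|-2\|f\|-\ve<0\le\bar\lambda^{N,c}(x)$ and is discarded (the coarser constant $2\|f\|$ is used only for uniformity and is more than enough). For $\bar\lambda^{N,c+\delta}$ with $c+\delta<0$, the impulse cost is bounded by $c+\delta$, so one repeats the argument replacing $-c$ by $-c-\delta$ throughout, the bound $\bar\lambda^{N,c+\delta}(x)\ge\mu(f_N)\ge0$ being unaffected, giving the threshold $\frac{2\|f\|+\ve}{-c-\delta}$.

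There is no real obstacle here; the only points requiring care are the elementary interchange $\liminf_{T}\big(\|f\|-(-c)n_T\big)=\|f\|-(-c)\limsup_{T}n_T$, and the (equally elementary) observation that the reference strategy $V_0$ belongs to the restricted class, so passing to it cannot lower the supremum.
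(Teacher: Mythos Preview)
Your proof is correct and follows essentially the same approach as the paper: bound $J(x,V)$ from above by $\|f\| + c\,\limsup_{T\to\infty}\frac1T\ee^x\{N^V(0,T)\}$, use the trivial lower bound $\bar\lambda(x)\ge 0$ (the paper uses $\ge -\|f\|$ for $\bar\lambda^{N,c}$ and $\bar\lambda^{N,c+\delta}$, which explains the $2\|f\|$), and conclude that strategies violating \eqref{strbound} are not $\ve$-optimal. The only cosmetic difference is that the paper phrases the argument directly in terms of $\ve$-optimal strategies rather than via the contrapositive.
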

\begin{proof} Consider the functional $J\big(x, V\big)$. For $\ve>0$, any $\ve$-optimal strategy $V_\ve=(\tau_i, \xi_i)$ satisfies
\begin{align*}
 \bar{\lambda}(x)-\ve
 &\leq \liminf_{T \to \infty} \frac{1}{T} \ee^x \Big\{ \int_0^T f(X_s) ds + \sum_{i = 1}^\infty \ind{\tau_i \le T} c(X_{\tau_i-}, \xi_i) \Big\}\\
 &\leq \|f\|  + c \limsup_{T \to \infty}\frac{1}{T} \ee^x\left\{N^{V_\ve}(0,T)\right\}.
\end{align*}
Since one can obviously constrain the supremum defining $\bar\lambda(x)$ to $\ve$-optimal strategies and taking into account that $\bar{\lambda}(x)\geq 0$, we obtain \eqref{strbound}. For the other two functionals, we use the lower bound $\min\big(\bar{\lambda}^{N,c},  \bar{\lambda}^{N,c+\delta}\big) \ge -\|f\|$ instead of $0$, and for $\bar{\lambda}^{N,c+\delta}$ use the upper bound on the cost equal to $c + \delta$.
\end{proof}

\begin{lemma}\label{approxofstra}
\begin{equation}\label{fact1}
- \bar{\lambda}^{N,\delta} (x) - \delta {\|f\| \over -c} \leq \bar{\lambda}^{N,c}-\bar{\lambda}(x)\leq \tl{\tl{\lambda}}^{N,\delta} + \delta \frac{2\|f\|}{-c},
\end{equation}
where either
\begin{equation}\label{value1p}
\tl{\tl{\lambda}}^{N,\delta}=\sup_{x\in U} \sup_\tau {\ee^x\left\{\int_0^\tau (\tilde f_N(X_s)-f(X_s))ds - \delta\right\} \over \ee^x\left\{\tau\right\}},
\end{equation}
or
\begin{equation}\label{value2p}
\tl{\tl{\lambda}}^{N,\delta}=\mu(\tl f_N-f),
\end{equation}
and both converge to $0$ when $N \to \infty$.
\end{lemma}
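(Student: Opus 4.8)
The plan is to prove the two estimates in \eqref{fact1} separately, each by the same device: pick a strategy almost optimal for one of the two functionals, rewrite the running cost via $f=f_N+(f-f_N)$ (for the left inequality) or via $f_N\le\tl f_N=f+(\tl f_N-f)$ (for the right inequality), and split the running‑cost discrepancy as $\int_0^T g(X_s)\,ds=\big[\int_0^T g(X_s)\,ds-\delta N^V(0,T)\big]+\delta N^V(0,T)$, with $g=f-f_N$ resp.\ $g=\tl f_N-f$. The bracketed part is (an increment of) one of the ``difference'' functionals, whose optimal value is $\bar\lambda^{N,\delta}(x)$ resp.\ $\tl{\tl\lambda}^{N,\delta}$, while $\delta N^V(0,T)/T$ is controlled by the impulse‑frequency bound of Lemma \ref{lem:strbound}. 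I shall freely use that $\bar\lambda^{N,c}$ is constant and attained by the strategy of Theorem \ref{thm:bellman} (Proposition \ref{prop:f_N}), together with $\liminf(a-b)\ge\liminf a-\limsup b$ and $\liminf(a+b)\le\liminf a+\limsup b$.

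\emph{Left inequality.} Fix $x$ and $\ve>0$ and take $V=(\tau_i,\xi_i)$ that is $\ve$‑optimal for $\bar\lambda(x)$ and, by Lemma \ref{lem:strbound}, satisfies $\limsup_T\tfrac1T\ee^x\{N^V(0,T)\}\le\tfrac{\|f\|+\ve}{-c}$. Since $\bar\lambda^{N,c}\ge J^{N,c}(x,V)$ and $\int_0^T f_N\,ds=\int_0^T f\,ds-\int_0^T(f-f_N)\,ds$,
\[
\bar\lambda^{N,c}\;\ge\;\bar\lambda(x)-\ve-\limsup_{T\to\infty}\tfrac1T\ee^x\Big\{\textstyle\int_0^T\!\big(f(X_s)-f_N(X_s)\big)\,ds\Big\}.
\]
Splitting off $\delta N^V(0,T)$ as above, the remainder contributes at most $\delta\tfrac{\|f\|+\ve}{-c}$, so the matter reduces to the bound $\limsup_T\tfrac1T\ee^x\{\int_0^T(f-f_N)(X_s)\,ds-\delta N^V(0,T)\}\le\bar\lambda^{N,\delta}(x)$; letting $\ve\downarrow0$ yields the left inequality (with constant $\delta\|f\|/(-c)$, since for $\bar\lambda$ the bound of Lemma \ref{lem:strbound} carries the factor $\|f\|$, not $2\|f\|$).

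\emph{Right inequality and vanishing of the bounds.} Symmetrically, take $V$ that is $\ve$‑optimal for $\bar\lambda^{N,c}$ with $\limsup_T\tfrac1T\ee^x\{N^V(0,T)\}\le\tfrac{2\|f\|+\ve}{-c}$. From $f_N\le\tl f_N$ and $\bar\lambda(x)\ge J(x,V)$,
\[
\bar\lambda^{N,c}-\ve\;\le\;\bar\lambda(x)+\limsup_{T\to\infty}\tfrac1T\ee^x\Big\{\textstyle\int_0^T\!\big(\tl f_N(X_s)-f(X_s)\big)\,ds\Big\};
\]
splitting off $\delta N^V(0,T)$ the remainder is at most $\delta\tfrac{2\|f\|+\ve}{-c}$, while the bracketed $\limsup$ is treated exactly as in Lemma \ref{valuelambda}: the control problem with running cost $\tl f_N-f$ and impulse cost $-\delta$ is either covered by Proposition \ref{propseparated} (value as in \eqref{value1p}) or has ``do nothing'' optimal by Theorem \ref{donothing} (value $\mu(\tl f_N-f)$, i.e.\ \eqref{value2p}), so that $\limsup$ does not exceed $\tl{\tl\lambda}^{N,\delta}$; $\ve\downarrow0$ gives the right inequality. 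Finally $\tl f_N-f$ vanishes on $B_{z,N}$ and $0\le\tl f_N-f\le\|f\|+1$, hence $\tl f_N-f\le(\|f\|+1)\ind{B^c_{z,N}}$; thus in form \eqref{value2p} one has $\mu(\tl f_N-f)\le(\|f\|+1)\mu(B^c_{z,N})\to0$, and in form \eqref{value1p} one has $\tl{\tl\lambda}^{N,\delta}\le(\|f\|+1)\sup_{y\in U}\sup_\tau\tfrac{\ee^y\{\int_0^\tau\ind{B^c_{z,N}}(X_s)\,ds\}}{\ee^y\{\tau\}}\to0$ by Assumption \ref{ergodicstopping}; that $\bar\lambda^{N,\delta}(x)\to0$ is Lemma \ref{valuelambda}.

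\emph{Where the work is.} The one non‑routine point, used twice above, is the passage — for the \emph{fixed} near‑optimal strategy $V$ — from ``the supremum over strategies of the $\liminf_T$‑functional equals $\bar\lambda^{N,\delta}(x)$ (resp.\ $\tl{\tl\lambda}^{N,\delta}$)'' to the one‑sided, strategy‑by‑strategy estimate ``$\limsup_T$ of that functional's integrand $\le\bar\lambda^{N,\delta}(x)$ (resp.\ $\tl{\tl\lambda}^{N,\delta}$)'', which does not follow from a Tauberian argument (Abel means are squeezed between $\liminf_T$ and $\limsup_T$, the wrong way). This is exactly where $\delta>0$ is indispensable. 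Since $f-f_N$ and $\tl f_N-f$ are dominated by a multiple of $\ind{B^c_{z,N}}$, impulses land in the compact set $U\subseteq B_{z,N}$, and the impulse frequency is bounded (Lemma \ref{lem:strbound}), one decomposes $[0,T]$ at the impulse times into cycles each starting in $U$; applying the strong Markov property on each cycle and Assumption \ref{ergodicstopping} bounds $\limsup_T\tfrac1T\ee^x\{\int_0^T\ind{B^c_{z,N}}(X^V_s)\,ds\}$ by the per‑cycle ratio of \ref{ergodicstopping}, uniformly over admissible strategies, which is precisely the content already extracted in Lemma \ref{valuelambda}. (One may also phrase this through the dominating running cost $\tl f_N-f_N$, whose sublevel set $\{\le\mu\}$ is compact, so that the full apparatus of Section \ref{sec:impulse_control} — the supermartingale characterisation and Lemma \ref{lem:tauber} — is available for it.) Controlling this $\limsup_T$ uniformly is the main obstacle; everything else is bookkeeping with the three functionals.
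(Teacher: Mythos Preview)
Your approach is essentially the paper's: pick a near-optimal strategy for one side, rewrite the running cost, and split off $\delta N^V(0,T)$ to absorb the discrepancy into an auxiliary impulse problem with cost $-\delta$. The paper routes this through the intermediate functional $J^{N,c+\delta}$, which is just your add-and-subtract written differently; your direct use of $f_N\le\tl f_N$ for the right inequality is in fact a little cleaner than the paper's detour (it avoids a stray factor of $2$ in the $\delta$-term).

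You correctly flag the one real issue: for a \emph{fixed} strategy $V$ one needs the $\limsup_T$ (not $\liminf_T$) of the difference functional bounded by the $\liminf$-defined value $\bar\lambda^{N,\delta}(x)$ (resp.\ $\tl{\tl\lambda}^{N,\delta}$). Your cycle-decomposition sketch does not quite close this: the first cycle starts from an arbitrary $x\notin U$, so Assumption \ref{ergodicstopping} does not cover it (you would need \ref{ass:positrec} to control the first hit of $U$ separately), and in any case the cycle argument produces a bound of the shape $(\|f\|+1)\sup_{y\in U}\sup_\tau\ee^y\{\int_0^\tau\ind{B^c_{z,N}}ds\}/\ee^y\{\tau\}$, not the specific quantities $\bar\lambda^{N,\delta}(x)$ or $\tl{\tl\lambda}^{N,\delta}$ --- enough for Theorem \ref{thm:bar_lambda}, but not for the lemma as stated. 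Your parenthetical alternative is the correct fix and is essentially what the paper does: pass to the dominating running cost $\tl f_N-f_N$ (resp.\ $\tl f_N-f$), whose sublevel set is compact, so Section \ref{sec:impulse_control} applies; the resulting Bellman solution $w$ is bounded \emph{below} (here $c(x,\xi)\equiv-\delta$, so $w\ge-\delta$ by \eqref{eq22'}), and iterating the supermartingale inequality of Lemma \ref{lem:super_mart} up to deterministic time $T$ for an arbitrary admissible $V$ yields the $\limsup_T$ bound directly. Note that Lemma \ref{lem:tauber} is the wrong tool here, as you observe: Abel means sit \emph{between} the Ces\`aro $\liminf$ and $\limsup$. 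The paper's justification at this step is terse --- integrability of impulse times for the optimal strategy plus monotone convergence --- but the operative content is precisely this supermartingale estimate for the dominating problem, via Proposition \ref{propseparated} and Lemma \ref{valuelambda}.
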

\begin{proof}
We start from the lower bound in \eqref{fact1}. Notice that
\begin{align*}
 \bar{\lambda}(x) - \bar{\lambda}^{N,c+\delta}
 &\le \sup_V \left( J(x,V) - J^{N,c+\delta}(x, V) \right)\\
 &\le \sup_V \limsup_{T \to \infty} \frac{1}{T} \ee^x \Big\{ \int_0^T  \big(f(X_s) - f_N(X_s)\big) ds - \sum_{i = 1}^\infty \ind{\tau_i \le T}\delta \Big\}
 =\bar{\lambda}^{N,\delta} (x),
 \end{align*}
where the last equality follows from Proposition \ref{propseparated} and Lemma \ref{valuelambda} which imply integrability of impulse times in optimal strategy for $\bar{\lambda}^{N,\delta}(x)$ and the monotone convergence theorem which implies equivalence of functionals with $\limsup$ and $\liminf$ under this integrability condition.

For any $\ve > 0$ and a strategy $V_\ve$ that is $\ve$-optimal for $\bar{\lambda}(x)$, we have
 \begin{align}\label{estvalue_b}
\bar{\lambda}(x) - \bar{\lambda}^{N,c}
&\le J(x,V_\ve) - J^{N,c}(x, V_\ve)  + \ve \nonumber \\
&\le J(x,V_\ve) -J^{N,c+\delta}(x, V_\ve)+ J^{N,c+\delta}(x, V_\ve) - J^{N,c}(x, V_\ve)   + \ve\nonumber \\
&\le \bar{\lambda}^{N,\delta} (x) + \delta {\|f\|+ \ve \over -c} +  \ve,
\end{align}
where the second term in \eqref{estvalue_b} follows from the fact that we are allowed to restrict ourselves to strategies $V$ satisfying \eqref{strbound} while the first term results from the calculations in the preceding part of the proof. From arbitrariness of $\ve$ we obtain \eqref{fact1}.

For the upper bound in \eqref{fact1}, take an $\ve$-optimal strategy $V_\ve$ for $\bar{\lambda}^{N,c}$ satifying \eqref{strbound}. Then
 \begin{equation}\label{estvalue}
\bar{\lambda}^{N,c}-\bar{\lambda}(x)\le  J^{N,c}(x, V_\ve)-J^{N,c+\delta}(x, V_\ve)+ J^{N,c+\delta}(x, V_\ve)- J(x,V_\ve)  + \ve.
\end{equation}
Since the cost $c+\delta$ is smaller than the cost $c$, we have
\(
J^{N,c}(x, V_\ve)-J^{N,c+\delta}(x, V_\ve) \le 0.
\)
Esimation of the other difference is more involved:
\begin{align*}
J^{N,c+\delta}(x, V_\ve)- J(x,V_\ve)
&\le \limsup_{T \to \infty} \frac{1}{T} \ee^x \Big\{ \int_0^T  (f_N(X_s)-f(X_s)) ds + \sum_{i = 1}^\infty \ind{\tau_i \le T}\delta \Big\} \\
 &\le \limsup_{T \to \infty} \frac{1}{T} \ee^x \Big\{ \int_0^T  (f_N(X_s)-f(X_s)) ds - \sum_{i = 1}^\infty \ind{\tau_i \le T}\delta \Big\} \\
 &\hspace{11pt}+ \limsup_{T \to \infty} \frac{1}{T} \ee^x \Big\{ 2 \delta \sum_{i = 1}^\infty \ind{\tau_i \le T} \Big\}
 \le \tl{\tl{\lambda}}^{N,\delta} + \delta \frac{2\|f\|+ \ve}{-c},
 \end{align*}
where the first term is requires a proof identical as in Lemma \ref{valuelambda} and the reasoning about integrability of impulse times as for the lower bound. The second term follows from Lemma \ref{lem:strbound}. Inserting above estimates into \eqref{estvalue} and exploiting the arbitratiness of $\ve$ proves the upper bound in \eqref{fact1}. The claim about convergence of $\tl{\tl{\lambda}}^{N, \delta}$ to zero requires identical arguments as in the proof of Lemma \ref{valuelambda}.
 \end{proof}

 \begin{proof}[Proof of Theorem \ref{thm:bar_lambda}]
 The upper and lower bound in \eqref{fact1} do not depend on $x$ and can be made arbitrarily small by choosing sufficiently large $N$ and small $\delta$. This proves $\lim_{N \to \infty} \bar\lambda^{N,c} = \bar \lambda(x)$, so $\bar\lambda$ is a constant function. Clearly, for any $\ve > 0$ there is $N$ such that $|\bar\lambda^{N,c} - \bar \lambda(x)| \le \ve$ and an optimal strategy for $\bar\lambda^{N,c}$ is $\ve$-optimal for $\bar \lambda$.
\end{proof}

\begin{cor}
Under assumptions of Theorem \ref{thm:bar_lambda}, the value function $\hat \lambda (x) = \sup_V \hat J(x, V)$, where the supremum is over integrable strategies, coincides with $\bar \lambda$ and optimal strategies for $\bar \lambda^{N,c}$ are integrable and $\ve$-optimal for $\hat \lambda$.
\end{cor}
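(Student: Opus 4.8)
The plan is to lift the conclusions of Theorem~\ref{optimalityoflambda}, which are already available for the truncated running cost $f_N$, up to the original $f$, reusing the approximation machinery behind Theorem~\ref{thm:bar_lambda} but now for the functional $\hat J$ of \eqref{eqn:weaker_functional} in place of $J$. Concretely, fix $\ve>0$ and take $\eta$ and $N$ as in Proposition~\ref{prop:f_N}, so that $\{x:f_N(x)\le\mu(f_N)\}$ is compact and $\limsup_{\|x\|\to\infty}f_N(x)=\eta<\bar\lambda^{N,c}(z)$. By Proposition~\ref{prop:f_N} and Lemma~\ref{lem:bounded from below} the running cost $f_N$ satisfies the hypotheses of Theorems~\ref{thm:bellman} and~\ref{optimalityoflambda}, and part~(1) of the latter, applied to $f_N$, gives that $\hat\lambda^{N,c}(x):=\sup_V\hat J^{N,c}(x,V)$ (supremum over integrable $V$, with $\hat J^{N,c}$ being \eqref{eqn:weaker_functional} with $f$ replaced by $f_N$) equals $\limsup_{\alpha\to0}\alpha v^{N,c}_\alpha(z)=\bar\lambda^{N,c}(x)$ and is attained by the strategy $(\tau^*_i,\xi^*_i)$ of Theorem~\ref{thm:bellman}. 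Since that theorem provides $x\mapsto\ee^x\{\tau^*_1\}$ bounded on compacts and $\xi^*_i\in U$ with $U$ compact, $\ee^x\{\tau^*_n\}\le\ee^x\{\tau^*_1\}+(n-1)\sup_{y\in U}\ee^y\{\tau^*_1\}<\infty$, i.e.\ $(\tau^*_i,\xi^*_i)$ is integrable; this already establishes the integrability assertion of the corollary.

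The main step is then an estimate comparing $\hat J$ and $\hat J^{N,c}$ uniformly over integrable strategies. Set $\epsilon_N:=\sup_{y\in U}\sup_{\tau}\ee^y\{\int_0^{\tau}\ind{B^c_{z,N}}(X_s)\,ds\}/\ee^y\{\tau\}$, which tends to $0$ as $N\to\infty$ by assumption~\ref{ergodicstopping}. For an integrable $V=(\tau_i,\xi_i)$ with controlled process $(X_s)$, since $f=f_N$ on $B_{z,N}$ and $0\le f,f_N\le\|f\|$, one has $|\hat J(x,V)-\hat J^{N,c}(x,V)|\le\|f\|\limsup_n\ee^x\{\int_0^{\tau_n}\ind{B^c_{z,N}}(X_s)\,ds\}/\ee^x\{\tau_n\}$, using $|\liminf a_n-\liminf b_n|\le\limsup|a_n-b_n|$. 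Splitting $[0,\tau_n)=\bigcup_{i=1}^n[\tau_{i-1},\tau_i)$ with $\tau_0=0$ and conditioning on $\ef_{\tau_{i-1}}$, for $i\ge2$ the piece on $[\tau_{i-1},\tau_i)$ is an uncontrolled trajectory started at $\xi_{i-1}\in U$ run up to the stopping time $\tau_i-\tau_{i-1}$, so the strong Markov property and the definition of $\epsilon_N$ give $\ee^x\{\int_{\tau_{i-1}}^{\tau_i}\ind{B^c_{z,N}}(X_s)\,ds\}\le\epsilon_N\,\ee^x\{\tau_i-\tau_{i-1}\}$; summing over $i\ge2$ and bounding the first piece crudely by $\ee^x\{\tau_1\}<\infty$ yields $\ee^x\{\int_0^{\tau_n}\ind{B^c_{z,N}}(X_s)\,ds\}\le\ee^x\{\tau_1\}+\epsilon_N\,\ee^x\{\tau_n\}$, and since $\ee^x\{\tau_n\}\to\infty$ (as $\tau_n\to\infty$ a.s.\ and the $\tau_n$ are integrable) the $\limsup$ is at most $\epsilon_N$. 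Hence $|\hat J(x,V)-\hat J^{N,c}(x,V)|\le\|f\|\epsilon_N$ for every integrable $V$ and every $x$, so $|\hat\lambda(x)-\bar\lambda^{N,c}(x)|=|\hat\lambda(x)-\hat\lambda^{N,c}(x)|\le\|f\|\epsilon_N$.

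Letting $N\to\infty$ and invoking $\bar\lambda^{N,c}\to\bar\lambda$ (a constant) from Theorem~\ref{thm:bar_lambda} gives $\hat\lambda\equiv\bar\lambda$. Finally, choosing $N$ with $2\|f\|\epsilon_N<\ve$, the integrable strategy $(\tau^*_i,\xi^*_i)$ satisfies $\hat J(x,(\tau^*_i,\xi^*_i))\ge\hat J^{N,c}(x,(\tau^*_i,\xi^*_i))-\|f\|\epsilon_N=\bar\lambda^{N,c}(x)-\|f\|\epsilon_N\ge\hat\lambda(x)-2\|f\|\epsilon_N>\hat\lambda(x)-\ve$, so it is $\ve$-optimal for $\hat\lambda$.

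I expect the one genuinely delicate point to be the uniform-in-strategy control of the time the controlled process spends in $B^c_{z,N}$: one must make the cycle decomposition and the strong Markov property rigorous within the canonical construction of the controlled process, so that each inter-impulse segment $[\tau_{i-1},\tau_i)$ reduces to an uncontrolled trajectory issued from $U$ (where assumption~\ref{ergodicstopping} applies), and one must note that the first segment contributes only $O(1)$ and is therefore absorbed because $\ee^x\{\tau_n\}\to\infty$. Everything else — the identification $\hat\lambda^{N,c}=\bar\lambda^{N,c}$, integrability of $(\tau^*_i,\xi^*_i)$, and the passage to the limit — is routine given the results cited above.
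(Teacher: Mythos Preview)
Your proof is correct and follows the same line as the paper's own short argument (Theorem~\ref{thm:final}), which the paper presents immediately after the corollary as a ``significantly shorter proof'' of precisely this content: bound $|\hat J(x,V)-\hat J^{N,c}(x,V)|$ by a multiple of $\sup_{y\in U}\sup_\tau \ee^y\{\int_0^\tau \ind{B^c_{z,N}}(X_s)\,ds\}/\ee^y\{\tau\}$, invoke \ref{ergodicstopping}, and use $\hat\lambda^{N,c}=\bar\lambda^{N,c}\to\bar\lambda$ from Proposition~\ref{prop:f_N} and Theorem~\ref{thm:bar_lambda}. Two minor differences are worth noting. First, the paper's Theorem~\ref{thm:final} uses $\tilde f_N\ge f$, which gives a one-sided bound $0\le \hat J^{N,c}-\hat J$ directly; you use $f_N$ and need the absolute value, which still works since $|f-f_N|\le\|f\|\ind{B^c_{z,N}}$. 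Second, and more substantively, you make explicit the cycle decomposition $[0,\tau_n)=\bigcup_i[\tau_{i-1},\tau_i)$ and the strong Markov reduction to uncontrolled trajectories started in $U$, which the paper glosses over by jumping straight from the controlled expectation to $\sup_\tau \ee^x\{\int_0^\tau\ind{B^c_{z,N}}(X_s)\,ds\}/\ee^x\{\tau\}$. Your treatment is the more careful one here, and your handling of the first segment (bounding it by $\ee^x\{\tau_1\}$ and noting $\ee^x\{\tau_n\}\to\infty$ via Fatou) is exactly what is needed.
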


We will present below a significantly shorter proof that $\hat\lambda(x)$ does not depend on $x$ and there are $\ve$-optimal strategies under the same assumptions as those of Theorem \ref{thm:bar_lambda}. Let \begin{equation}
\hat{J}^{N,c}\big(x, V\big) = \liminf_{n \to \infty} \frac{1}{\ee^x\left\{\tau_n\right\}} \ee^x \Big\{ \int_0^{\tau_n} \tl f_N(X_s) ds + \sum_{i = 1}^n  c(X_{\tau_i-}, \xi_i) \Big\},
\end{equation}
$\hat \lambda_N(x)=\sup_V \hat{J}^{N,c}\big(x, V\big)$ and $\hat{\lambda}(x)=\sup_V \hat{J}\big(x, V\big)$, where the suprema are taken over integrable strategies. Theorem \ref{optimalityoflambda} implies that $\hat \lambda_N$ does not depend on $x$ while the following theorem will prove it for $\hat \lambda$.
\begin{theorem}\label{thm:final}
Under \ref{ass:weak_feller}-\ref{ass:speed_ergodic}, \ref{ass:cost_function}-\ref{ass:positrec}, \ref{ass:unifintc_b} and  \ref{ergodicstopping} if  $\limsup_{\alpha \to 0}\alpha v_\alpha(z)\allowbreak =\lambda>\mu(f)$ we have that $\hat \lambda_N$ is a constant function for sufficiently large $N$ and $\hat \lambda_N(x) \to \hat{\lambda}(x)$, as $N\to \infty$, so, in particular, $\hat \lambda$ is a constant function.
\end{theorem}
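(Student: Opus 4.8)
The plan is to reduce everything to the results of Section~\ref{sec:impulse_control} applied with the running cost $f$ replaced by the truncated cost $\tl f_N$, and then to estimate the gap between the $\tl f_N$-problem and the original one using Assumption~\ref{ergodicstopping}.

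First I would check that for all sufficiently large $N$ the $\tl f_N$-problem falls within the scope of Theorems~\ref{thm:bellman} and~\ref{optimalityoflambda}. Write $v^{\tl f_N}_\alpha$ for the discounted value function of the impulse problem with running cost $\tl f_N$ and unchanged impulse cost $c$. Since $\tl f_N\ge f$ pointwise, every discounted functional only grows, so $v^{\tl f_N}_\alpha(z)\ge v_\alpha(z)$ and hence $\limsup_{\alpha\to 0}\alpha v^{\tl f_N}_\alpha(z)\ge\lambda>\mu(f)$. Because $\mu(\tl f_N)\to\mu(f)$ as $N\to\infty$ (readily, since $\tl f_N=f$ on $B_{z,N}$, $\tl f_N-f$ is bounded, and $\mu(B^c_{z,N})\to0$), for $N$ large we get $\limsup_{\alpha\to 0}\alpha v^{\tl f_N}_\alpha(z)>\mu(\tl f_N)$. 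Moreover $\{x:\tl f_N(x)\le\mu(\tl f_N)\}\subset B_{z,N+1}$ is compact, so Lemma~\ref{lem:bounded from below} secures Assumption~\ref{ass:boundedpotential} for $\tl f_N$; the remaining hypotheses of Theorems~\ref{thm:bellman} and~\ref{optimalityoflambda} do not involve the running cost. Thus Theorem~\ref{optimalityoflambda}(1) applied with running cost $\tl f_N$ gives that $\hat\lambda_N(x)\equiv\lambda_N:=\limsup_{\alpha\to 0}\alpha v^{\tl f_N}_\alpha(z)$ is a finite constant, attained by the integrable strategy $V^\ast=(\tau^\ast_i,\xi^\ast_i)$ of Theorem~\ref{thm:bellman}: here $\xi^\ast_i\in U$, $\tau^\ast_{i+1}=\tau^\ast_i+\tau^\ast_1\circ\theta_{\tau^\ast_i}$, and $x\mapsto\ee^x\{\tau^\ast_1\}$ is bounded on compacts.

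Next I would establish the two-sided bound $\hat\lambda(x)\le\lambda_N\le\hat\lambda(x)+(2\|f\|+1)\,\epsilon_N$ with $\epsilon_N:=\sup_{y\in U}\sup_\tau\ee^y\{\int_0^\tau\ind{B^c_{z,N}}(X_s)\,ds\}/\ee^y\{\tau\}\to0$ by Assumption~\ref{ergodicstopping}. The lower bound is immediate: $\tl f_N\ge f$ gives $\hat J^{N,c}(x,V)\ge\hat J(x,V)$ for every integrable $V$, hence $\lambda_N=\hat\lambda_N(x)\ge\hat\lambda(x)$. For the upper bound I would use $\lambda_N=\hat J^{N,c}(x,V^\ast)$ exactly, together with $0\le\tl f_N-f\le(2\|f\|+1)\ind{B^c_{z,N}}$ and the elementary fact $\liminf b_n\ge\liminf a_n-\limsup(a_n-b_n)$ applied to the running averages $a_n,b_n$ of $\hat J^{N,c}(x,V^\ast)$ and $\hat J(x,V^\ast)$ along $(\tau^\ast_n)$ (the impulse-cost sums cancel in $a_n-b_n$), obtaining
\[
\lambda_N\le\hat\lambda(x)+\limsup_{n\to\infty}\frac{\ee^x\big\{\int_0^{\tau^\ast_n}(\tl f_N-f)(X_s)\,ds\big\}}{\ee^x\{\tau^\ast_n\}},
\]
where $(X_s)$ is controlled by $V^\ast$. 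I expect the bounding of this $\limsup$ to be \emph{the main obstacle}. I would split the integral over the renewal cycles $[\tau^\ast_k,\tau^\ast_{k+1}]$: by the strong Markov property and the structure of $V^\ast$, $\ee^x\{\int_{\tau^\ast_k}^{\tau^\ast_{k+1}}(\tl f_N-f)(X_s)ds\mid\F_{\tau^\ast_k}\}=g(\xi^\ast_k)$ and $\ee^x\{\tau^\ast_{k+1}-\tau^\ast_k\mid\F_{\tau^\ast_k}\}=\ee^{\xi^\ast_k}\{\tau^\ast_1\}$ with $g(y):=\ee^y\{\int_0^{\tau^\ast_1}(\tl f_N-f)(X_s)ds\}$; since each $\xi^\ast_k\in U$, the definition of $\epsilon_N$ yields $g(y)\le(2\|f\|+1)\epsilon_N\,\ee^y\{\tau^\ast_1\}$ for $y\in U$. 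Summing over $k=1,\dots,n-1$ gives $\ee^x\{\int_{\tau^\ast_1}^{\tau^\ast_n}(\tl f_N-f)(X_s)ds\}\le(2\|f\|+1)\epsilon_N\,\ee^x\{\tau^\ast_n\}$, while the single initial excursion contributes only $\ee^x\{\int_0^{\tau^\ast_1}(\tl f_N-f)(X_s)ds\}\le(2\|f\|+1)\ee^x\{\tau^\ast_1\}<\infty$; dividing by $\ee^x\{\tau^\ast_n\}$ and using $\tau^\ast_n\uparrow\infty$ a.s. (so $\ee^x\{\tau^\ast_n\}\uparrow\infty$) bounds the $\limsup$ by $(2\|f\|+1)\epsilon_N$.

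The delicate point in this last step is exactly that Assumption~\ref{ergodicstopping} only controls excursions started from $U$, so one must exploit that after the first impulse every cycle of $V^\ast$ restarts in $U$, while the lone excursion from the arbitrary initial point $x$ is washed out by the divergence of $\ee^x\{\tau^\ast_n\}$. Combining the two bounds, $\hat\lambda(x)\le\lambda_N\le\hat\lambda(x)+(2\|f\|+1)\epsilon_N$ for all large $N$; letting $N\to\infty$ and recalling $\epsilon_N\to0$ gives $\hat\lambda_N(x)=\lambda_N\to\hat\lambda(x)$, and since $\lambda_N$ does not depend on $x$, neither does $\hat\lambda$.
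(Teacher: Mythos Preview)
Your proof is correct and follows essentially the same approach as the paper: verify that the $\tl f_N$-problem satisfies the hypotheses of Theorems~\ref{thm:bellman} and~\ref{optimalityoflambda} (so $\hat\lambda_N$ is constant for large $N$), and then sandwich $\hat\lambda(x)$ and $\hat\lambda_N$ using $\tl f_N\ge f$ together with a bound on $\ee^x\{\int_0^{\tau_n}(\tl f_N-f)(X_s)\,ds\}/\ee^x\{\tau_n\}$ via Assumption~\ref{ergodicstopping}. Your renewal decomposition over the cycles $[\tau^\ast_k,\tau^\ast_{k+1}]$, separating the single excursion from the arbitrary initial point $x$ from the subsequent cycles that start in $U$, makes explicit the reduction to uncontrolled excursions from $U$ needed to invoke \ref{ergodicstopping}; the paper writes the corresponding bound for an arbitrary integrable strategy and simply cites \ref{ergodicstopping}, leaving this reduction implicit.
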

\begin{proof}
We assume without loss of generality that $f \ge 0$. Since $\tl f_N\geq f$ and $\lim_{N \to \infty} \mu(\tl f_N)= \mu(f)$, then $\limsup_{\alpha \to 0}\alpha \hat v^{N,c}_\alpha(z)>\mu(\tl f_N)$ for sufficiently large $N$, where $\hat v^{N,c}_\alpha$ is the analogue of $v^{N,c}_\alpha$ with $f_N$ replaced by $\hat f_N$. Therefore, by Theorem \ref{thm:bellman} and \ref{optimalityoflambda} $\hat\lambda_N(x)$ does not depend on $x$.
For any integrable strategy $V$, we have
\begin{align*}
0&\leq \hat{J}^{N,c}\big(x, V\big)- \hat{J}\big(x, V\big) \le \limsup_{n\to \infty} \frac{1}{\ee^x\{\tau_n\}} \ee^x\Big\{\int_0^{\tau_n}\big(f_N(X_s) - f(X_s)\big) ds \Big\} \\
&\leq \limsup_{n\to \infty}\frac{\ee^x\big\{\int_0^{\tau_n}(\|f\|+1)\ind{B_{z,N}^c}(X_s)ds\big\}}{\ee^x\{\tau_n\}}
\leq (\|f\| + 1) \sup_{\tau} \frac{\ee^x\big\{\int_0^{\tau}\ind{B_{z,N}^c}(X_s)ds\big\}}{\ee^x\{\tau\}}.
\end{align*}
By assumption \ref{ergodicstopping} the right-hand side converges to $0$ as $N \to \infty$. Since the bound does not depend on $V$, this implies that $\hat \lambda_N$ converges, as $N \to \infty$, to $\hat\lambda(x)$. This also implies that $\hat \lambda(x)$ does not depend on $x$.

%
\end{proof}

\bibliographystyle{amsplain}

\bibliography{references-undis}

\end{document}